\let\expandafter\oldproof\csname\string\proof\endcsname
\let\oldendproof\endproof \renewenvironment{proof}[1][\proofname]{
\oldproof[\ttfamily\scshape \bf #1.] }{\oldendproof}
\def\ve{\varepsilon}  \def\tilde{\widetilde}
\def\emp{\emptyset}  
\def\dom{{\rm dom}\,} 
 \def\min{\mbox{\rm minimize}}
   \def\B{\mathbb
B}   
  \def\disp{\displaystyle}
\def\tto{\rightrightarrows} 
 \def\Bar{\overline} \def\ra{\rangle}
\def\la{\langle} \def\ve{\varepsilon} \def\epsilon{\varepsilon}
\def\ox{\bar{x}} 
\def\oy{\bar{y}} 
\def\oz{\bar{z}}
\def\ov{\bar{v}}
\def\kk{\kappa}
\def\gph{\mbox{\rm gph}\,} 
 \def\dom{\mbox{\rm dom}\,}
\def\ker{\mbox{\rm ker}\,}
\def\rank{\mbox{\rm rank}\,}
  \def\O{\Omega}
\def\ph{\varphi} \def\emp{\emptyset}
\def\oR{\Bar{\R}} \def\lm{\lambda}\def\gg{\gamma} \def\dd{\delta}
\def\al{\alpha}  \def\N{{\rm I\!N}}
\def\R{{\rm I\!R}} \def\th{\theta} \def\vt{\vartheta}
\def\Limsup{\mathop{{\rm Lim}\,{\rm sup}}}
\def\sce{\setcounter{equation}{0}}
\begin{document}
\newtheorem{Assumption}{Assumption}
\newtheorem{Theorem}{Theorem}[section]
\newtheorem{Proposition}[Theorem]{Proposition}
\newtheorem{Remark}[Theorem]{Remark}
\newtheorem{Lemma}[Theorem]{Lemma}
\newtheorem{Corollary}[Theorem]{Corollary}
\newtheorem{Definition}[Theorem]{Definition}
\newtheorem{Example}[Theorem]{Example}
\newtheorem{Algorithm}[Theorem]{Algorithm}
\newtheorem{Problem}[Theorem]{Problem}
\renewcommand{\theequation}{{\thesection}.\arabic{equation}}
\renewcommand{\thefootnote}{\fnsymbol{footnote}} \begin{center}
{\bf\Large Variational Convexity of Functions and\\
Variational Sufficiency in Optimization}\\[1ex]
PHAM DUY KHANH\footnote{Department of Mathematics, HCMC University
of Education, Ho Chi Minh City, Vietnam. E-mail:
pdkhanh182@gmail.com.}, BORIS S.
MORDUKHOVICH\footnote{Department of Mathematics, Wayne State
University, Detroit, Michigan, USA. E-mail: aa1086wayne.edu. Research of this author was partly supported by the US National Science Foundation under grants DMS-1808978 and DMS-2204519, by the Australian Research Council under Discovery Project DP-190100555, and by Project 111 of China under grant D21024.}
and VO THANH PHAT \footnote{Department of Mathematics, HCMC
University of Education, Ho Chi Minh City, Vietnam and Department of
Mathematics, Wayne State University, Detroit, Michigan, USA.
E-mails: phatvt@hcmue.edu.vn; phatvt@wayne.edu. The research of this author was partly supported by the US National Science Foundation under grants DMS-1808978 and DMS-2204519.}
\end{center}
\small
{\bf Abstract}. The paper is devoted to the study, characterizations, and applications of variational convexity of functions, the property that has been recently introduced by Rockafellar together with its strong counterpart. First we show that these variational properties of an extended-real-valued function are equivalent to, respectively, the conventional (local) convexity and strong convexity of its Moreau envelope. Then we derive new characterizations of both variational convexity and variational strong convexity of general functions via their second-order subdifferentials (generalized Hessians), which are coderivatives of subgradient mappings. We also study relationships of these notions with local minimizers and tilt-stable local minimizers. The obtained results are used for characterizing related notions of variational and strong variational sufficiency in composite optimization with applications to nonlinear programming.\\[0.05ex] 
{\bf Key words}. variational convexity, variational strong convexity, Moreau envelopes, second-order subdifferentials, composite optimization, variational and strong variational sufficiency, tilt-stable local minimizers\\[0.05ex]
{\bf AMS subject classification}. 49J53, 49J52, 90C31\\[0.05ex]
\vspace*{-0.15in}
		
\normalsize
\section{Introduction}\label{sec:intro}\sce\vspace*{-0.05in}
\setcounter{equation}{0}

The properties of {\em variational convexity} and {\em variational strong convexity} of extended-real-valued lower semicontinuous (l.s.c.) functions $\ph\colon\R^n\to\oR:=(-\infty,\infty]$ have been recently introduced by Rockafellar \cite{r19}. It has been well recognized and utilized in convex and variational analysis that the subgradient mapping $\partial\ph$ associated with  an l.s.c.\ function $\ph$ is maximal monotone {\em if and only if} the function is convex on the entire space. This result is fundamental in convex analysis and its numerous applications, particularly those to optimization-related problems; see, e.g., the books \cite{Bauschke,Mordukhovich06,mornam,Rockafellar70,Rockafellar98} for more details and references therein. Loosely speaking, variational convexity corresponds to the {\em graphical localization} of the maximal monotonicity property in terms of the limiting subdifferential around the point in question. Variational strong convexity can be defined in this scheme with replacing monotonicity of the subgradient mapping by strong monotonicity (with some modulus). The latter property is closely related to the notion of {\em tilt stability} of local minimizers introduced earlier in Poliquin and Rockafellar \cite{Poli}.

Both variational convexity and variational strong convexity properties of a function are more subtle than the conventional notions of {\em local convexity} and {\em strong local convexity} of the function relative to some neighborhood of the reference point. It is easy to illustrate by simple one-dimensional examples that the corresponding {\em variational} properties are satisfied without any local convexity. The importance of variational convexity and its strong counterpart for the study of nonconvex optimization problems has been demonstrated in the recent papers by Rockafellar \cite{r19,roc,r22}, where the reader can find, in particular, applications to the proximal point and augmented Lagrangian methods in rather general frameworks. 

Our first {\em principal} result in this paper reveals that the {\em variational convexity} of an l.s.c.\ prox-bounded function is {\em equivalent} to the usual {\em local convexity} of the {\em Moreau envelope} together with the {\em prox-regularity} of the function in question. We prove in this way that minimizing a {\em nonsmooth variationally convex} function can be reduced to the minimization of its {\em smooth and convex} Moreau envelope. A parallel characterization is established for {\em variational strong convexity} of extended-real-valued functions. 

Since both Moreau envelope and prox-regularity notions are well understood and employed in variational analysis and optimization, the obtained characterizations open the door for further developments and applications. Note that the reduction of minimizing prox-regular functions to the minimization of their smooth Moreau envelopes have been recently exploited in \cite{BorisKhanhPhat,kmptjogo,kmptmp,BorisEbrahim} for the design and justification of generalized {\em Newton-type algorithms} in nonsmooth optimization. The additional {\em convexity} of Moreau envelopes in the case of variationally convex cost functions creates new numerical perspectives for this approach by applying powerful theoretical and algorithmic tools of convex analysis and optimization. 

The aforementioned principal results and machinery of second-order variational analysis allow us to derive coderivative-based {\em second-order characterizations} of variational convexity and variational strong convexity with prescribed moduli for general classes of l.s.c.\ functions. These characterizations are obtained in both {\em neighborhood} and {\em pointbased} forms via the {\em second-order subdifferentials} introduced by Mordukhovich \cite{m92}, which have been broadly developed in variational theory and applications, including more recent ones to numerical algorithms of nonsmooth optimization; see below. In this way we also shed new light on the study of {\em tilt-stable minimizers}.

Related topics addressed in this paper concern problems of {\em composite optimization} written in the form
\begin{equation}\label{cop}
\min\quad \ph(x)+\psi\big(g(x)\big),\quad x\in\R^n,
\end{equation}
where $\ph\colon\R^n\to\R$ and $g\colon\R^n\to\R^m$ are ${\cal C}^2$-smooth, while $\psi\colon\R^m\to\oR$ is merely lower semicontinuous. Since the function $\psi$ is generally extended-real-valued, format \eqref{cop} includes problems of constrained optimization with the domain constraints $g(x)\in\dom\psi$. Our major attention here is paid to Rockafellar's recent notions of {\em variational and strong variational sufficiency} for local optimality in \eqref{cop} that are proved to be important for developing both theoretical and computational aspects of optimization. Based on the second-order subdifferential characterizations of variational convexity and its strong counterpart, we derive complete {\em characterizations of variational and strong variational sufficiency} for large classes of composite optimization problems. The obtained results are specified for problems of {\em nonlinear programming}, where they are expressed entirely via the program data due to the explicit computations of second-order subdifferentials.  

The rest of the paper is organized as follows. Section~\ref{sec:prelim} overviews those notions of variational analysis and generalized differentiation, which are broadly used in the formulations and proofs of the main results obtained below. In Sections~\ref{sec:Moreau} and \ref{sec:Moreau1}, we recall and discuss the notions of variational convexity and strong variational convexity, respectively, and establish the equivalence between these properties of the function in question and local convexity/strong local convexity of its Moreau envelope. Section~\ref{sec:codcharacterizationconvex} is devoted to deriving neighborhood and pointbased characterizations of variational convexity and variational strong convexity of extended-real-valued functions in terms of coderivative-based second-order subdifferentials. Section~\ref{sec:varsuf} addresses second-order subdifferential characterization of variational and strong variational sufficiency for local optimality in problems of composite optimization \eqref{cop}. Applications of these results to nonlinear programs are given in Section~\ref{vs-nlp}. The concluding Section~\ref{sec:conclusion} summarizes the main results of the paper and discusses some important topics of our future research.\vspace*{-0.17in}

\section{Preliminaries and Discussions}\label{sec:prelim}\vspace*{-0.05in}
\setcounter{equation}{0}

In this section, we recall and discuss some basic notions and facts from variational analysis that are largely in what follows; see \cite{Mordukhovich06,Mor18,Rockafellar98} for more details. Let $F$ be a set-valued mapping (multifunction) between Euclidean spaces $\R^n$ and $\R^m$. As usual, the effective domain and the graph of $F$ are given, respectively, by
$$
\dom F:=\big\{x\in\R^n\;\big|\;F(x)\ne\emp\big\}\quad\text{and}\quad\gph F=\big\{(x,y)\in\R^n\times\R^m\;\big|\;y\in F(x)\big\}.
$$
The (Painlev\'e-Kuratowski) {\em outer limit} of $F$ as $x\rightarrow\bar{x}$ is defined as
\begin{equation}\label{OuterLimit}
	\underset{x\rightarrow\bar{x}}{\Limsup}\;F(x):
	=\big\{y\in\R^n\;\big|\;\exists\,x_k\to\bar x,\ y_k\rightarrow y\;\mbox{ with } y_k\in F(x_k),\;k=1,2,\ldots\big\}.
\end{equation}
Considering an extended-real-valued function $\varphi\colon\R^n\rightarrow\overline{\R}$, we always assume that $\varphi$ is proper, i.e., $\dom\ph:=\{x\in\R^n\;|\;\varphi(x)<\infty\}\ne\emp$. The (Fr\'echet) {\em regular subdifferential} of $\varphi$ at $\bar{x}\in\dom\varphi$ is
\begin{equation}\label{FrechetSubdifferential}
	\widehat\partial\varphi(\ox)=\Big\{v\in\R^n\;\Big|\;\liminf\limits_{x\to \overline x}\frac{\varphi(x)-\varphi(\ox)-\langle v,x-\ox\rangle}{\|x-\ox\|}\ge 0\Big\}.
\end{equation}
The (Mordukhovich) {\em limiting/basic/general subdifferential} and {\em singular/horizon subdifferential} of $\varphi$ at $\ox$ are defined, respectively, via the outer limit \eqref{OuterLimit} by
\begin{equation}\label{MordukhovichSubdifferential}
	\partial\varphi(\bar{x}):=\underset{x \overset{\varphi}{\to} \bar{x}}{\Limsup}\; \widehat{\partial}\varphi(x)\;\mbox{ and }\;
	\partial^\infty\varphi(\bar{x}):=\underset{x\overset{\varphi}{\to} \bar{x} \atop \lambda\downarrow 0}{\Limsup}\;\lambda\widehat{\partial}\varphi(x),
\end{equation}
where $x\overset{\varphi}{\to}\bar{x}$ means that $x\rightarrow\bar{x}$ with $\varphi(x)\rightarrow\varphi(\bar{x})$. In the case where $\widehat{\partial}\varphi(\ox)=\partial\varphi(\ox)$, the function $\varphi$ is called {\em lower regular} at $\ox$; see \cite{Mordukhovich06}. This agrees with the subdifferential (or Clarke) regularity of $\ph$ at $\ox$ in the sense of \cite{Rockafellar98} provided that $\ph$ is locally Lipschitzian around $\ox$. Observe that both regular and limiting subdifferentials reduce to the classical gradient $\nabla\ph(\ox)$ for continuously differentiable functions, while the singular subdifferential of an l.s.c. function reduces to $\{0\}$ if and only if $\ph$ is locally Lipschitzian around $\ox$.

Given a set $\Omega\subset\R^n$ with its indicator function $\delta_\Omega(x)$ equal to $0$ for $x\in\Omega$ and to $\infty$ otherwise, the {\em regular} and {\em limiting normal cones} to $\Omega$ at $\bar{x}\in\Omega$ are defined, respectively, via the subdifferentials \eqref{FrechetSubdifferential} and \eqref{MordukhovichSubdifferential} by
\begin{equation}\label{NormalCones}
	\widehat{N}_\Omega(\bar{x}):=\widehat{\partial}\delta_\Omega(\bar{x})
	\quad\text{and}\quad
	N_\Omega(\bar{x}):=\partial\delta_\Omega(\bar{x}).
\end{equation}
The coderivative constructions for $F\colon\R^n\tto\R^m$ at $(\ox,\oy)\in\gph F$ are defined via the normal cones \eqref{NormalCones} to the graph of $F$ at this point. They are the {\em regular coderivative} and the {\em limiting coderivative} of $F$ at $(\ox,\oy)$ given by
\begin{equation}\label{reg-cod} 
	\widehat{D}^*F(\ox,\oy)(v):=\big\{u\in\R^n\;\big|\;(u,-v)\in \widehat{N}_{{\rm
			gph}\,F}(\ox,\oy)\big\},\quad v\in\R^m, 
\end{equation}
\begin{equation}\label{lim-cod}
	D^*F(\ox,\oy)(v):=\big\{u\in\R^n\;\big|\;(u,-v)\in N_{{\rm
			gph}\,F}(\ox,\oy)\big\},\quad v\in\R^m, 
\end{equation} 
respectively. In the case where $F(\bar{x})$ is the singleton $\{\bar{y}\}$, we omit $\oy$ in the notation of \eqref{reg-cod} and \eqref{lim-cod}. Note that if
$F\colon\R^n\to\R^m$ is ${\cal C}^1$-smooth around $\ox$, then
\begin{equation*}
	\widehat{D}^*F(\bar{x})(u)=D^*F(\bar{x})(u)=\big\{\nabla F(\bar{x})^*u\big\},\quad u\in\R^n,
\end{equation*}
where $\nabla F(\bar{x})^*$ is the adjoint/transpose
matrix of the Jacobian $\nabla F(\bar{x})$.

\begin{Definition}[\bf second-order subdifferentials]\label{2nd} \rm 
	Let $\varphi:\R^n\to \overline{\R}$ and $\ox\in\dom\ph$.
	
	{\bf(i)} For any $\bar{y}\in{\partial}\varphi(\bar{x})$, the mapping ${\partial}^2 \varphi(\bar{x},\bar{y}):\R^n \rightrightarrows\R^n$ with the values
	\begin{equation}\label{limitsec}
		{\partial}^2\varphi(\bar{x},\bar{y})(u):=({D}^* {\partial}\varphi)(\bar{x},\bar{y})(u),\quad u\in\R^n,
	\end{equation} 
	is said to be the \textit{basic/limiting second-order subdifferential} of $\varphi$ at $\bar{x}$ relative to $\bar{y}$.

	{\bf(ii)} For any $\bar{y}\in\partial\varphi(\bar{x})$, the mapping $\breve{\partial}^2\varphi(\bar{x},\bar{y}):\R^n \rightrightarrows \R^n$ with the values
	\begin{equation}\label{seccombine} 
		\breve{\partial}^2\varphi(\bar{x},\bar{y})(u):=(\widehat{D}^*\partial\varphi)(\bar{x},\bar{y})(u),\quad u\in\R^n,
	\end{equation} 
	is said to be the \textit{combined second-order subdifferential} of $\varphi$ at $\bar{x}$ relative to $\bar{y}$. 
\end{Definition}

Clearly, we have the following inclusion
\begin{equation}\label{2ndinclusioncombinelimit}
	\breve{\partial}^2\varphi(x,y)(w)\subset\partial^2\varphi(x,y)(w)\;\text{ for all }\; (x,y)\in\gph\partial\varphi,\; w\in\R^n.
\end{equation}
We omit $\bar{y}=\nabla\varphi(\bar{x})$ in the above second-order subdifferential notation if $\varphi$ is $\mathcal{C}^1$-smooth around $\bar{x}$. If $\varphi$ is $\mathcal{C}^2$-smooth around $\bar{x}$, then we get, via the  {symmetric Hessian matrix that}
\begin{equation*}
	\breve{\partial}^2\varphi(\bar{x})(u)=\partial^2\varphi(\bar{x})(u)=\big\{\nabla^2 \varphi(\bar{x})u\big\}\;\mbox{ for all }\;u\in\R^n.
\end{equation*}
This justifies the names of {\em generalized Hessians} for the second-order constructions from Definition~\ref{2nd}.\vspace*{0.03in}

It has been recognized in variational analysis and applications to optimization and related topics that the basic second-order subdifferential \eqref{limitsec} enjoys well-developed {\em calculus rules} in both finite and infinite dimensions \cite{Mordukhovich06,Mor18,mo,BorisOutrata,mr,msar} and admits efficient {\em computations} for major classes of extended real-valued functions encountered in variational analysis, optimization, machine learning, statistics, stochastic systems, optimal control, etc. as, e.g., in \cite{chhm,dsy,dr,hmn,hos,hr,BorisKhanhPhat,kmptjogo,mr,msar,os,yy}. Involving these calculus and computations, the second-order construction \eqref{limitsec} has been instrumental, among other applications, to provide complete characterizations of the fundamental notions of {\em tilt} and {\em full stability} in optimization, optimal control, and variational systems as, e.g., in \cite{dl,dmn,gm,lpr,MorduNghia13,MorduNghia,MorduNghia1,mnghia-fs,mnr,mor,mr,msar,Poli,qw}, to characterize global and local {\em monotonicity} properties of subgradient mappings \cite{clmn,MorduNghia1}, and to characterize {\em convexity} and {\em generalized convexity} properties of various classes of extended-real-valued functions; see \cite{ChieuChuongYaoYen,ChieuHuy11,clmn,kp18,kp20}. 

Although calculus rules available for the combined second-order subdifferential \eqref{seccombine} are less impressive in comparison with those for \eqref{limitsec}, the second-order construction \eqref{seccombine} is proved to be useful, especially in infinite dimensions, to establish neighborhood characterizations of tilt and full stability properties of variational systems, monotonicity properties of multifunctions, and (generalized) convexity properties of extended-real-valued functions; see, e.g., \cite{ChieuChuongYaoYen,ChieuHuy11,clmn,MorduNghia,MorduNghia1,mnghia-fs,msar} and the references therein. Note that it is often easier to compute \eqref{seccombine} than \eqref{limitsec}, and then to employ the computation of \eqref{seccombine} for the subsequent computation of the more robust construction \eqref{limitsec}. 

We need to  recall yet another notion of generalized second-order derivatives including \textit{second subderivatives} \cite{Rockafellar98} and \textit{quadratic bundles} \cite{roc}. Given $\ph\colon\R^n\to\oR$ with $\ox\in\dom\ph$, define the {\em second subderivative} of $\varphi$ at $\ox$ for $v\in\R^n$ and $w\in\R^n$ by the lower limit of the second-order quotients
\begin{equation*}
	d^2\varphi(\ox,v)(w):=\liminf_{\tau\downarrow 0\atop u\to w} \frac{\varphi(\bar{x}+\tau u)-\varphi(\bar{x})-\tau\langle v,u\rangle}{\frac{1}{2}\tau^2}.
\end{equation*}
Then $\ph$ is said to be {\em twice epi-differentiable} at $\ox$ for $v$ if for every $w\in\R^n$ and every choice of $\tau_k\downarrow 0$, there exists a sequence $w^k\to w$ such that
\begin{equation*}
	\frac{\varphi(\ox+\tau_k w^k)-\varphi(\ox)-\tau_k\langle v,w^k\rangle}{\frac{1}{2}\tau_k^2}\to d^2\varphi(\ox,v)(w)\;\mbox{ as }\;k\to\infty.
\end{equation*}
Twice epi-differentiability has been recognized as an important concept of second-order variational analysis with numerous applications to optimization; see the aforementioned monograph by Rockafellar and Wets and the recent papers \cite{mms,ash-ebr}. A function $\ph:\R^n\to \overline{\R}$  is called a \textit{generalized quadratic form} if $\ph(0)=0$ and the mapping $\partial\ph$ is generalized linear, i.e., $\gph\partial\ph$ is a subspace of $\R^n\times\R^n$. The function $\varphi$ is called \textit{generalized twice differentiable} at $\ox$ for a subgradient $\ov\in \partial\varphi(\ox)$ if it is twice epi-differentiable at $\ox$ for $\ov$ with the second-order subderivative $d^2\varphi(\ox,\ov)$ being a generalized
quadratic form. This allows us to define the \textit{quadratic bundle} of $\varphi$ at $\ox$ for $\ov$ by
\begin{equation}\label{qb}
	\text{\rm quad}\;\varphi(\ox,\ov) := \left[\begin{array}{l}
		\text { the collection of generalized quadratic forms } q \text { for which }\\
		\exists\left(x_k, v_k\right) \rightarrow(\ox, \ov) \text { with } \varphi \text { generalized twice differentiable } \\
		\text { at } x_k \text { for } v_k \text { and such that the generalized quadratic } \\
		\text { forms } q_k=\frac{1}{2} d^2 \varphi\left(x_k,v_k\right) \text { converge epigraphically to } q,
	\end{array}\right.
\end{equation}  

Next we recall some classes of extended-real-valued functions $\ph\colon\R^n\to\oR$ broadly used in the paper. As usual, $\ph$ is {\em convex} on a convex set $\O\subset\R^n$ if
\begin{equation*}
	\varphi\big((1-\lambda)x+\lambda y\big)\le(1-\lambda)\varphi(x)+\lambda\varphi(y)\quad\text{for all }\;x,y\in\Omega,\; \lambda\in[0,1]. 
\end{equation*}
We say that $\ph$ is {\em strongly convex} on $\O$ with modulus $\kk>0$ if its quadratic shift $\varphi-(\kappa/2)\|\cdot\|^2$ is convex on $\O$, i.e., 
\begin{equation}\label{ineqstrongfunction}
	\varphi((1-\lambda)x +\lambda y)\le(1-\lambda)\varphi(x)+\lambda\varphi(y)-\frac{\kappa}{2}\lambda(1-\lambda)\|x-y\|^2,
\end{equation}
for any $x,y\in\Omega$, $\lambda\in[0,1]$. 
It is easy to see that if $\varphi$ is strongly convex on $\Omega$ with modulus $\kappa>0$, then the function $h$ defined by $h(x):=\varphi(ax+b)$ is strongly convex on $\Omega$ with modulus $\kappa a^2$, where $a\ne 0$ and $b\in\R$ are taken from \eqref{ineqstrongfunction}.

An l.s.c.\ function $\ph$ is {\em prox-regular} at $\bar{x}\in\dom\ph$ for $\bar{v}\in\partial\ph(\ox)$ if there exist $\epsilon>0$ and $r\ge 0$ such that 
\begin{equation}\label{prox}
	\varphi(x)\ge\varphi(u)+\langle v,x-u\rangle-\frac{r}{2}\|x-u\|^2\
\end{equation}
for all $x\in\mathbb{B}_\epsilon(\bar{x})$ and $(u,v)\in\gph\partial\varphi\cap (\mathbb{B}_\epsilon(\ox)\times\mathbb{B}_\epsilon(\ov))$ with $\varphi(u)<\varphi(\bar{x})+\epsilon$, where $\B_\ve(a)$ stands for the closed ball centred at $a$ with radius $\ve$. If this holds for all $v\in\partial\varphi(\ox)$, then $\varphi$ is said to be {\em prox-regular at} $\ox$.  

We say that $\varphi$ is {\em subdifferentially continuous} at $\bar{x}$ for $\ov\in \partial\varphi(\bar{x})$ if for any $\epsilon>0$ there exists $\delta>0$ such that $|\varphi(x)-\varphi(\bar{x})|<\epsilon$ whenever $(x,v)\in\gph\partial\varphi\cap (\mathbb{B}_\delta(\ox)\times\mathbb{B}_\delta(\ov))$. When this holds for all $v\in\partial\varphi(\ox)$, the function $\varphi$ is said to be {\em subdifferentially continuous at} $\ox$. It is easy to see that if $\varphi$ is subdifferentially continuous at $\bar{x}$ for $\bar{v}$, then the inequality  ``$\varphi(x)<\varphi(\ox)+\epsilon$" in the definition of prox-regularity can be omitted.  Functions that are both prox-regular and subdifferentially continuous are called {\em continuously prox-regular}. This is a major class of extended-real-valued functions in second-order variational analysis, being a common roof for particular collections of functions important for applications as, e.g., amenable functions, etc.; see \cite[Chapter~13]{Rockafellar98}.\vspace*{0.03in}

The following two well-known constructions play a crucial role in this paper. Given 
an l.s.c.\ function $\varphi:\R^n\to\overline{\R}$ and a parameter value $\lambda>0$, the {\em Moreau envelope} $e_\lambda\varphi$ and {\em proximal mapping} $\textit{\rm Prox}_{\lambda\varphi}$ are defined by
\begin{equation}\label{Moreau} 
	e_\lambda\varphi(x):=\inf\Big\{\varphi(y)+\frac{1}{2\lambda}\|y-x\|^2\;\Big|\;y\in \R^n\Big\},\quad x\in\R^n,
\end{equation}	
\begin{equation}\label{ProxMapping} 
	\operatorname{Prox}_{\lambda\varphi}(x):= \operatorname{argmin}\Big\{\varphi(y)+ \frac{1}{2\lambda}\|y-x\|^2\;\Big|\;y\in\R^n\Big\},\quad x\in\R^n.
\end{equation}	
A function $\varphi$ is said to be {\em prox-bounded} if there exists $\lambda>0$ such that $e_\lambda\varphi(x)>-\infty$ for some $x\in\R^n$.\vspace*{0.03in}

We now present important properties of the Moreau envelope and the proximal mapping that are taken from \cite[Proposition~13.37]{Rockafellar98}. Recall that the {\em $\varphi$-attentive $\epsilon$-localization} of the subgradient mapping $\partial\varphi$ around $(\ox,\ov)$ is the set-valued mapping $T\colon\R^n\tto\R^n$ defined by
\begin{equation}\label{attentive}
	T(x):=\begin{cases}
		\big\{v\in\partial\varphi(x)\;\big|\;\|v-\ov\|<\epsilon\big\} & \text{if}\quad \|x-\ox\|<\epsilon\; \text{ and }\; |\varphi(x)-\varphi(\ox)|<\epsilon,\\
		\emp &\text{otherwise}.
	\end{cases}
\end{equation} 
If $\varphi$ is an l.s.c.\ function, a localization can be taken with just  $\varphi(x)<\varphi(\ox)+\epsilon$ in \eqref{attentive}.  Recall also that a function $\ph$ is of {\em class $C^{1,1}$}  (or ${\cal C}^{1.+})$) around $\ox$ if it is ${\cal C}^1$-smooth and its gradient is Lipschitz continuous around this point.

\begin{Proposition}[\bf Moreau envelopes and proximal mappings for prox-regular functions]\label{C11}  Let $\varphi\colon\R^n\to\oR$ be an l.s.c.\ and prox-bounded function which is prox-regular at $\ox$ for $\ov\in\partial\varphi(\ox)$. Then there exists a $\varphi$-attentive $\epsilon$-localization $T$ of $\partial\varphi$ such that for all sufficiently small numbers $\lambda>0$ there is a convex neighborhood $U_\lambda$ of $\ox+\lambda\ov$ on which the following hold:\\[1ex]
	{\bf(i)} The Moreau envelope $e_\lambda\varphi$ from \eqref{Moreau} is of class ${\cal C}^{1,1}$ on the set $U_\lambda $.\\[1ex]
	{\bf(ii)} The proximal mapping $\text{\rm Prox}_{\lm\ph}$ from \eqref{ProxMapping}  is single-valued, monotone, and Lipschitz continuous on $U_\lm$ satisfying the
	condition ${\rm Prox}_{\lm\ph}(\ox+\lm\ov)=\ox$.\\[1ex]
	{\bf(iii)} The gradient of $e_\lm\ph$ is calculated by:
	\begin{equation}\label{GradEnvelope} 
		\nabla e_\lambda\varphi(x)=\frac{1}{\lambda}\Big(x-\text{\rm Prox}_{\lambda\varphi}(x)\Big)=\big(\lambda I+T^{-1}\big)^{-1}(x)\;\mbox{ for all }\;x\in U_\lambda.
	\end{equation}
	If in addition $\varphi$ is subdifferentially continuous at $\ox$ for $\ov$, then $T$ in \eqref{GradEnvelope} can be replaced by $\partial\varphi$. 
\end{Proposition}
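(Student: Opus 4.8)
I would extract everything from the single inequality \eqref{prox} by monotone‑operator calculus, proving (ii) and the gradient formula \eqref{GradEnvelope} first and reading off (i) at the end. Let $\epsilon>0$ and $r\ge0$ be the constants furnished by prox‑regularity \eqref{prox}, let $T$ be the $\varphi$‑attentive $\epsilon$‑localization \eqref{attentive} of $\partial\varphi$ around $(\ox,\ov)$ (so $\ov\in T(\ox)$), and let $\lambda$ run over an interval $(0,\lambda^\ast)$ with $\lambda^\ast$ below the prox‑boundedness threshold and, when $r>0$, below $1/r$. Applying \eqref{prox} to two graph points $(u_1,v_1),(u_2,v_2)\in\gph T$ — once with base point $u_2$ and subgradient pair $(u_1,v_1)$ and once with the indices interchanged (both admissible since graph points of $T$ lie in $\B_\epsilon(\ox)\times\B_\epsilon(\ov)$ with $\varphi(u_i)<\varphi(\ox)+\epsilon$) — and adding the resulting inequalities yields $\langle v_1-v_2,u_1-u_2\rangle\ge-r\|u_1-u_2\|^2$. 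Hence $T+rI$ is monotone, so $J_\lambda:=I+\lambda T=(1-\lambda r)I+\lambda(T+rI)$ is strongly monotone with modulus $1-\lambda r>0$; therefore its inverse $P_\lambda:=J_\lambda^{-1}$ is single‑valued and Lipschitz with constant $(1-\lambda r)^{-1}$ on its domain, and $P_\lambda(\ox+\lambda\ov)=\ox$ since $\ox+\lambda\ov\in J_\lambda(\ox)$.

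\textbf{Step 2 (the core): $\mathrm{Prox}_{\lambda\varphi}=P_\lambda$ near $\ox+\lambda\ov$.} Let $u_x$ be a minimizer of $y\mapsto\varphi(y)+\tfrac1{2\lambda}\|y-x\|^2$ over the compact ball $\B_\epsilon(\ox)$ (it exists by lower semicontinuity). Comparing its value with the value at $\ox$ and using the lower estimate \eqref{prox} at $(\ox,\ov)$, one obtains after completing the square a bound of the form $\|u_x-\ox\|\le\tfrac{2}{1-\lambda r}\|x-(\ox+\lambda\ov)\|$. Consequently there is a convex neighborhood $U_\lambda$ of $\ox+\lambda\ov$ on which $u_x\in\mathrm{int}\,\B_\epsilon(\ox)$, so Fermat's rule together with the exact sum rule for the $\mathcal C^1$ quadratic gives $\tfrac1\lambda(x-u_x)\in\partial\varphi(u_x)$; since moreover $u_x$ is close to $\ox$, $\tfrac1\lambda(x-u_x)$ close to $\ov$, and $\varphi(u_x)$ close to $\varphi(\ox)$, we in fact have $\tfrac1\lambda(x-u_x)\in T(u_x)$, i.e.\ $u_x\in P_\lambda(x)$, hence $P_\lambda(x)=\{u_x\}$ by Step 1. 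It remains to upgrade $u_x$ to a \emph{global} minimizer, so that $\mathrm{Prox}_{\lambda\varphi}(x)=\{u_x\}$: on $U_\lambda$ any competitor $y\notin\B_\epsilon(\ox)$ satisfies $\|y-x\|\ge\epsilon/2$, and prox‑boundedness (for $y$ of large norm) together with lower semicontinuity on the compact annulus (for the remaining $y$) makes $\varphi(y)+\tfrac1{2\lambda}\|y-x\|^2\to+\infty$ as $\lambda\downarrow0$, whereas the value at $u_x$ stays near $\varphi(\ox)$; thus after further shrinking $\lambda^\ast$ no competitor ties or beats $u_x$. Therefore $\mathrm{Prox}_{\lambda\varphi}\equiv P_\lambda$ on $U_\lambda$, it is single‑valued, Lipschitz, and satisfies $\mathrm{Prox}_{\lambda\varphi}(\ox+\lambda\ov)=\ox$, while the strong monotonicity of $J_\lambda$ gives $\langle x_1-x_2,\mathrm{Prox}_{\lambda\varphi}(x_1)-\mathrm{Prox}_{\lambda\varphi}(x_2)\rangle\ge0$; this is (ii).

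\textbf{Step 3: the envelope, the gradient formula, and the addenda.} Completing the square in \eqref{Moreau} gives, with $g_\lambda:=\varphi+\tfrac1{2\lambda}\|\cdot\|^2$, the identity $e_\lambda\varphi(x)=\tfrac1{2\lambda}\|x\|^2-g_\lambda^\ast(x/\lambda)$, where below the prox‑boundedness threshold $g_\lambda^\ast$ is a finite convex function; moreover any $y\in\mathrm{Prox}_{\lambda\varphi}(\lambda z)$ obeys the Fenchel equality $g_\lambda(y)+g_\lambda^\ast(z)=\langle z,y\rangle$ and hence $y\in\partial g_\lambda^\ast(z)$. By Step 2 the map $z\mapsto\mathrm{Prox}_{\lambda\varphi}(\lambda z)=P_\lambda(\lambda z)$ is a single‑valued Lipschitz selection of the convex subdifferential $\partial g_\lambda^\ast$ on the open set $\lambda^{-1}U_\lambda$; since a continuous selection of the subdifferential of a convex function at every point of an open set forces differentiability there, $g_\lambda^\ast$ is $\mathcal C^{1,1}$ on $\lambda^{-1}U_\lambda$ with $\nabla g_\lambda^\ast(z)=P_\lambda(\lambda z)$. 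Hence $e_\lambda\varphi\in\mathcal C^{1,1}(U_\lambda)$ — this is (i) — and $\nabla e_\lambda\varphi(x)=\tfrac1\lambda\big(x-P_\lambda(x)\big)=\tfrac1\lambda\big(x-\mathrm{Prox}_{\lambda\varphi}(x)\big)$; the elementary identity $(\lambda I+T^{-1})^{-1}(x)=\{\tfrac1\lambda(x-u)\mid u\in(I+\lambda T)^{-1}(x)\}=\tfrac1\lambda(x-P_\lambda(x))$ supplies the last equality in \eqref{GradEnvelope}. Finally, if $\varphi$ is subdifferentially continuous at $\ox$ for $\ov$, the value restriction in \eqref{attentive} is redundant, so near $(\ox,\ov)$ the localization $T$ agrees with the plain graphical localization of $\partial\varphi$; as every pair $\big(u,\tfrac1\lambda(x-u)\big)$ with $x\in U_\lambda$ and $u=P_\lambda(x)$ lies in that region, $T$ may be replaced by $\partial\varphi$ in \eqref{GradEnvelope}.

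\textbf{Main obstacle.} The monotone‑operator bookkeeping of Step 1, the conjugacy identity, and the continuous‑selection criterion of Step 3 are routine. The delicate part is Step 2 — turning the purely \emph{local} information in \eqref{prox} into control of the genuinely \emph{global} proximal mapping \eqref{ProxMapping}. One must order the quantifiers correctly (first fix a small $\lambda$, then shrink $U_\lambda$), verify both $\|u_x-\ox\|<\epsilon$ and $|\varphi(u_x)-\varphi(\ox)|<\epsilon$ (one inequality from the envelope upper bound $\varphi(u_x)\le e_\lambda\varphi(x)$, the other from the prox‑regularity estimate), and — most importantly — exclude distant competing minimizers, which is precisely where prox‑boundedness is indispensable.
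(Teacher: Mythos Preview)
The paper does not supply its own proof of this proposition; it is quoted verbatim as a known result and attributed to \cite[Proposition~13.37]{Rockafellar98}. So there is no in-paper argument to compare against.

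That said, your reconstruction is essentially the standard proof one finds in Rockafellar--Wets and related sources: extract hypomonotonicity of the localization $T$ from \eqref{prox}, deduce strong monotonicity of $I+\lambda T$ for $\lambda<1/r$ so that $(I+\lambda T)^{-1}$ is single-valued Lipschitz, identify this inverse with $\mathrm{Prox}_{\lambda\varphi}$ near $\ox+\lambda\ov$, and then read off the $\mathcal C^{1,1}$ smoothness of $e_\lambda\varphi$ either via the conjugacy identity $e_\lambda\varphi(x)=\tfrac1{2\lambda}\|x\|^2-(\varphi+\tfrac1{2\lambda}\|\cdot\|^2)^\ast(x/\lambda)$ or directly. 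Your Step~2 estimate $\|u_x-\ox\|\le\tfrac{2}{1-\lambda r}\|x-(\ox+\lambda\ov)\|$ is exactly what comes out of combining the minimizing inequality with the prox-regularity lower bound at $(\ox,\ov)$, and your identification of the ``main obstacle'' --- promoting the \emph{local} minimizer over $\B_\epsilon(\ox)$ to the \emph{global} one defining $\mathrm{Prox}_{\lambda\varphi}$, using prox-boundedness to kill far-away competitors --- is precisely the substantive point in the textbook proof. The only place I would tighten the exposition is the ``$\to+\infty$ as $\lambda\downarrow0$'' clause: what one actually shows is a uniform lower bound $\varphi(y)+\tfrac1{2\lambda}\|y-x\|^2\ge e_{\lambda'}\varphi(x)+\big(\tfrac1{2\lambda}-\tfrac1{2\lambda'}\big)\|y-x\|^2$ for a fixed $\lambda'$ below the prox-bound threshold, which for $\|y-x\|\ge\epsilon/2$ exceeds the value at $u_x$ once $\lambda$ is small; this makes the quantifier order you flagged completely explicit.
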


Next we formulate the main properties of extended-real-valued functions studied in this paper, which have been recently introduced and investigated by Rockafellar in \cite{r19,roc,r22}.

\begin{Definition}[\bf variationally convex functions]\label{vr} \rm An l.s.c.\  function $\varphi:\R^n\to\overline{\R}$ is called {\em variationally convex} at $\ox$ for $\ov\in\partial\varphi(\ox)$ if for some convex neighborhood $U\times V$ of $(\ox,\ov)$ there exist an l.s.c.\ convex function $\psi\le\varphi$ on $U$ and a number $\epsilon>0$ such that 
	\begin{equation}\label{varconvex}
		(U_\epsilon\times V)\cap\gph\partial\varphi=(U\times V)\cap\gph\partial\psi\;\text{and}\;\varphi(x)=\psi(x)\;\text{at the common elements}\;(x,v), 
	\end{equation}
	where $U_\epsilon:=\{x\in U\;|\;\varphi(x)<\varphi(\ox)+\epsilon\}$. We say that $\ph$ is {\em variationally strongly convex} at $\ox$ for $\ov$ with modulus $\sigma >0$ if \eqref{varconvex} holds with $\psi$ being strongly convex on $U$ with this modulus.
\end{Definition}

Let us illuminate some remarkable features of Rockafellar's notions from Definition~\ref{vr}.

\begin{Remark}[\bf discussion on variational and strong variational convexity]\label{disvar} {\rm Observe the following:
		
		{\bf(i)} It is easy to see from Definition~\ref{vr} that if $V=\R^n$ in \eqref{varconvex}, then the variational convexity (variational strong convexity) reduces to the local convexity (local strong convexity) of $\varphi$ around $\ox$, but not necessarily otherwise; cf. \cite[Examples~6 and 7]{r19}. 
		
		{\bf(ii)} In Definition~\ref{vr} of variational convexity, the function $\psi$ is {\em locally convex} (i.e., convex on $U$). In fact, it is possible to equivalently replace there the local convexity by the {\em global} one. Indeed, if there exists a locally convex function $\psi$ satisfying \eqref{varconvex}, we can define the function $\overline{\psi}:\R^n\to\overline{\R}$ by: 
		$$
		\overline{\psi}(x):=\begin{cases}
			\psi(x) &\text{if}\quad x\in U,\\
			\infty &\text{otherwise}.
		\end{cases}
		$$
		It is clear that $\overline{\psi}$ is an l.s.c.\ convex function on $\R^n$ with $\partial\overline{\psi}(x)=\partial\psi(x)$ for any $x\in U$, which implies that 
		$$
		(U_\epsilon\times V)\cap\gph\partial\varphi=(U\times V)\cap\gph\partial \overline{\psi}\;\text{and}\;\varphi(x)=\overline{\psi}(x)\; \text{at the common elements}\;(x,v). 
		$$
		Similarly, the local strong convexity of $\psi$ can be replaced by the global one for variational strong convexity. 
		
		{\bf(iii)} The {\em prox-regularity} of $\varphi$ at $\ox$ holds {\em automatically} if $\ph$ is {\em variationally convex} for this pair $(\ox,\ov)$. Indeed, we have by the convexity of $\psi$ in Definition~\ref{vr} that
		$$
		\psi(x)\ge\psi(u)+\langle v,x-u\rangle\quad\text{for all }\;x,u\in \R^n,\;v\in \partial\psi(u). 
		$$
		Combining this with \eqref{varconvex} and $\psi\le\ph$ on $U$ yields
		$$
		\varphi(x)\ge\varphi(u)+\langle v,x- u\rangle \quad\text{for all }\;x\in U,\;(u,v) \in\gph\partial\varphi\cap (U_\epsilon\times V),
		$$
		which tells us that $\varphi$ is prox-regular at $\ox$ for $\ov$. When $\varphi$ is subdifferentially continuous at $\ox$ for $\ov$, we can replace the set $U_\epsilon$ by the neighborhood $U$ in Definition~\ref{vr}.  
		
		{\bf(iv)} If $\varphi$ is {\em $\mathcal{C}^1$-smooth} around $\ox$, then the {\em variational} convexity (variational strong convexity) of $\varphi$ at $\ox$ for $\ov=\nabla\varphi(\ox)$ reduces to the {\em local} convexity (local strong convexity) around $\ox$. Indeed, the underlying condition \eqref{varconvex} reads for smooth functions $\ph$ as
		$$
		(U\times V)\cap\gph\nabla \varphi=(U\times V)\cap\gph\partial\psi\; \text{and}\; \varphi(x)=\psi(x)\;\text{at the common elements}\;(x,v).
		$$
		Due to the continuity of $\nabla\varphi$ around $\ox$, we can find a neighborhood $\tilde{U}\subset U$ of $\ox$ such that $\nabla\varphi(\tilde{U})\subset V$, which implies that $\varphi(x)=\psi(x)$ for any $x\in \tilde{U}$. Therefore, $\varphi$ is  convex (strongly convex) on $\tilde{U}$, which means the local convexity (local strong convex) of $\ph$ around $\ox$. This remark tells us that the novelty of variational convexity (variational strong convexity) vs.\ local convexity (local strong convexity) emerges only in {\em nonsmooth} functions.}
\end{Remark}

The following example presents a nonconvex variationally convex  function, which is called \textit{$\ell^0$ pseudo-norm}  and is commonly used in compressive sensing; see, e.g., \cite{cwb,dslmh}.

\begin{Example}[\bf variational convexity of $\ell^0$ pseudo-norm] \rm
	Let $\varphi:\R^n\to \R$ given by $\varphi(x):=\|x\|_0$, which is the $\ell^0$
	norm of $x$, counting the number of nonzero elements of $x$. In other words, we can represent $\varphi$ as 
	\begin{equation*}
		\varphi(x) = \sum_{i=1}^n I(x_i)\;\text{ for all }\; x=(x_1,\ldots,x_n)\in \R^n
	\end{equation*}
	by using the standard notation
	\begin{equation*}
		I(t):= \begin{cases}
			1&\text{if} \quad t \ne 0,\\
			0&\text{otherwise}.
		\end{cases}
	\end{equation*}
	It is easy to calculate the subdifferential of $\ph$ by
	$$
	\partial\varphi (x) =  \left\{v=(v_1,v_2,\ldots,v_n)\in\R^n\;\bigg|\;
	\begin{array}{@{}cc@{}}
		v_i= 0,\; \text{if } \; x_i\ne 0,\\
		v_i\in \R,\; \text{if } \; x_i=0
	\end{array}\right\}.
	$$
	Let us show that $\varphi$ is variationally convex at $\ox=0$ for any $\ov \in \partial\varphi(\ox)$. Indeed, take $\epsilon\in (0,1)$ and denote
	$$
	U:= \text{\rm int}\,\mathbb{B}_{\delta_1}(0)\times \ldots \times \text{\rm int}\,\mathbb{B}_{\delta_n}(0)\;\text{ and }\;V:= \text{\rm int}\,\mathbb{B}_\epsilon(\ov_1)\times\ldots \times  \text{\rm int}\,\mathbb{B}_\epsilon(\ov_n).
	$$
	where $\delta_i:=(1+|\bar{v}_i|)^{-1}$ for $i=1,\ldots,n$. Define further
	the l.s.c.\ convex function $\psi:\R^n\to\R$ by
	$$
	\psi(x):=  \|x\|_1 + \langle \ov, x\rangle\;\text{ for all }\;x \in \R^n. 
	$$
	It is clear that $\psi \leq \varphi$ on $U$ and that $U_\epsilon =\{0\}$.  
	Moreover, the simple computation tells us that
	$$
	\partial\psi (x) =  \left\{v=(v_1,\ldots,v_n)\in\R^n\;\bigg|\;
	\begin{array}{@{}cc@{}}
		v_i= \ov_i + \text{\rm sgn}(x_i),\; \text{if } \; x_i\ne 0,\\
		v_i\in \ov_i + [-1,1],\; \text{if } \; x_i=0
	\end{array}\right\},
	$$
	which implies therefore the equalities 
	$$
	(U_\epsilon\times V)\cap\gph\partial\varphi=(U\times V)\cap\gph\partial\psi = \{0\}\times V\;\text {and }\; \psi(0)=\varphi(0) 
	$$
	This justifies \eqref{varconvex} and thus shows that $\varphi$ is variationally convex at $\ox$ for $\ov$.  
\end{Example}

Next we recall some notions of monotonicity of set-valued mappings  and their local counterparts. {A multifunction $T:\R^n\rightrightarrows\R^n$ is} {\em monotone} relative to a subset $W\subset\R^n\times\R^n$ if
\begin{equation}\label{monotone}
	\langle v_1-v_2,u_1-u_2\rangle\ge 0\quad\text{for all }\;(u_1,v_1), (u_2,v_2) \in\gph T\cap W. 
\end{equation} 
Condition \eqref{monotone} is referred to as {\em global} monotonicity when $W=\R^n\times\R^n$. Similarly, {\em strong monotonicity} with modulus $\sigma>0$ corresponds to replacing $\langle v_1-v_2,u_1-u_2\rangle\ge 0$ in \eqref{monotone} by $\langle v_1-v_2,u_1-u_2 \rangle\ge\sigma\|u_1-u_2\|^2$.

Monotonicity can also be defined locally around a given point. More specifically, $T\colon\R^n\tto\R^n$ is {\em locally monotone} around $(\ox,\ov)\in\gph T$ if there exists a neighborhood $W\subset\R^n\times\R^n$ of $(\ox,\ov)$ such that $T$ is   monotone relative to this neighborhood.  For an l.s.c.\ function $\varphi:\R^n\to\overline{\R}$, the subgradient mapping $\partial\varphi:\R^n\rightrightarrows\R^n$ is said to be {\rm $\varphi$-locally monotone} around $(\ox,\ov)\in\gph\partial\varphi$ if there exists a convex neighborhood $U\times V$ of $(\ox,\ov)$ such that $\partial\varphi$ is monotone relative to the set 
\begin{equation}\label{W}
	W:=\big\{(x,y)\in U\times V\big|\;\varphi(x)<\varphi(\ox)+\epsilon\big\}
\end{equation} 
for some $\epsilon>0$. The same pattern defines the notion of {\em $\varphi$-local strong monotonicity}. 

It is clear that if $T:=\partial\varphi$ is locally monotone around $(\ox,\ov)\in \gph\partial\varphi$, then it is also $\varphi$-locally monotone around this point. The reverse implication is not correct. Indeed, consider the nonsmooth  l.s.c.\ function $\varphi:\R \to\R$ defined by $\varphi(x):=0$ if $x\le 0$ and $\varphi(x):=1$ if $x > 0$. It is easy to check that
$$
\widehat{\partial}\varphi(x)=\partial\varphi(x)= \begin{cases}
	\{0\} & \text{if} \quad x\ne 0,\\
	[0,\infty) & \text{otherwise}. 
\end{cases}
$$
We see that $\partial\varphi$ is not locally monotone around $(\ox,\ov):=(0,0)$, but it is $\varphi$-locally monotone around this point because $\partial\varphi$ is locally monotone relative to the set 
$$
W:=\big\{(x,y)\in\mathbb{B}_\epsilon(\ox)\times\mathbb{B}_\epsilon(\ov)\;\big|\; \varphi(x)<\varphi(\ox)+\epsilon\big\},
$$
where $\epsilon\in(0,1)$. When $\varphi$ is subdifferentially continuous at $\ox$ for $\ov$, there is no difference between the local monotonicity and $\varphi$-local monotonicity of $T:=\partial\varphi$ around $(\ox,\ov)$.

\medskip 
Next we consider a continuous  
(i.e., certainly being subdifferentially continuous)  and variationally convex function on $\R$, which is known as the \textit{log-sum penalty function}. This function is commonly used to bridge the gap between the $\ell_0$ and $\ell_1$ norms in compressive sensing \cite{cwb} and as a nonconvex
surrogate function of the matrix rank function in the low-rank regularization \cite{dslmh}.

\begin{Example}[\bf log-sum penalty function]\rm Let  $\varphi:\R^n\to \R$ given by 
	$$
	\varphi(x)=  \sum_{i=1}^n  \log (1+|x_i|)\;\text {for all }\; x=(x_1,\ldots,x_n) \in \R^n.
	$$ 
	We clearly have the subdifferential representation 
	$$
	\partial\varphi (x) =\big\{v=(v_1,v_2,\ldots,v_n)\in\R^n\;\big|\; v_i \in G(x_i), \; i=1,\ldots, n\big\}, 
	$$
	where the multifunction $G: \R \rightrightarrows \R$ is defined by
	$$
	G(t):= \begin{cases}
		\displaystyle \frac{1}{t-1} & \text{if }\quad t< 0,\\
		[-1,1]&\text{if }\quad t =0,\\
		\displaystyle \frac{1}{t+1} &\text{if }\quad t>0.
	\end{cases}
	$$
	To show that $\varphi$ is variationally convex at $\ox:=0$ for any $\ov \in \text{\rm int}\,\partial\varphi(\ox)=(-1,1)^n$, observe that $\partial \varphi$ is locally monotone around $(\ox,\ov)$. Then the claimed variational convexity of $\ph$ follows from \cite[Theorem~1]{r19}.
\end{Example}

\color{black}
Finally in this section, we recall the fundamental notion of {\em tilt stability} of 
local minimizers introduced by Poliquin and Rockafellar \cite{Poli} and then comprehensively investigated in many publications mentioned above (see also the references therein) with numerous applications in variational analysis and optimization.

\begin{Definition}[\bf tilt-stable local minimizers]\label{def:tilt} \rm Given $\varphi\colon\R^n\to\oR$, a point $\ox\in\dom\varphi$ is a {\it tilt-stable local minimizer} of $\varphi$ if there exists a number $\gamma>0$ such that the mapping
	\begin{equation}\label{tilt} 
		M_\gamma\colon v\mapsto{\rm
			argmin}\big\{\varphi(x)-\langle v,x\rangle\;\big|\;x
		\in\B_\gamma(\ox)\big\} 
	\end{equation}
	is single-valued and Lipschitz continuous on some neighborhood of $0\in\R^n$ with
	$M_\gamma(0)=\{\ox\}$. 
\end{Definition}\vspace*{-0.03in} 
We also consider in what follows a {\em quantitative} version of this notion that specifies a modulus of tilt stability. Namely, $\ox$ is a tilt-stable minimizer of $\varphi$ with {\em modulus} $\kappa>0$ if the mapping $M_\gamma$ is Lipschitz continuous with constant $\kappa$ in the framework of Definition~\ref{def:tilt}.\vspace*{0.05in}

The remark below discusses relationships between variational convexity and local minimizers as well as between variational strong convexity and tilt-stable local minimizers.

\begin{Remark}[\bf variational convexity and local minimizers]\label{sttilt} \rm Considering an extended-real-valued l.s.c.\ function $\varphi:\R^n\to\overline{\R}$, observe the following:
	
	{\bf(i)} If $\varphi:\R^n\to\overline{\R}$ is variationally convex at $\ox$ for $0\in\partial\varphi(\ox)$, then $\ox$ is a local minimizer of $\varphi$. The {\em reverse} implication {\em fails} in general. Indeed, consider the continuous function 
	\begin{equation}\label{exa1}
		\varphi(x):=\begin{cases}
			\text{\rm min}\left\{\disp\left(1+\frac{1}{n}\right)|x|-\frac{1}{n(n+1)},\frac{1}{n}\right\} &\text{if}\quad\disp\frac{1}{n+1}\leq |x|\leq \frac{1}{n},\\
			0&\text{if}\quad x=0
		\end{cases}
	\end{equation}
	taken from \cite[Example~3.4]{dl}. It is easy to check that this function is not prox-regular at its local minimizer $\ox=0$ for $\ov=0$, and hence it is not variationally convex for these points by Remark~\ref{disvar}(iii).
	
	{\bf(ii)} It follows from \cite[Theorems~2.3]{r19} that if $\varphi:\R^n\to\overline{\R}$ is variationally strongly convex at $\ox$ for $0\in\partial\varphi(\ox)$ with modulus $\sigma>0$, then $\ox$ is a tilt-stable local minimizer of $\ph$ with modulus $\sigma^{-1}$. However, the {\em reverse} implication may {\em fail} in simple situations. As shown in \cite[Example~3.4]{dl}, the point $\ox=0$ is a tilt-stable local minimizer of $\ph$ from \eqref{exa1}. However, by (i) this function is not even variationally convex at $\ox$ for $\ov=0$.
\end{Remark}

Note that the failure of the reverse implication in Remark~\ref{sttilt}{(ii) is {\em impossible} if a function $\ph\colon\R^n\to\oR$  is {\em continuously prox-regular} at $\ox$ for $0$. This is proved in the next proposition.
	\begin{Proposition}[\bf equivalent descriptions of variational strong convexity]\label{equitiltstr} Let $\varphi:\R^n\to\overline{\R}$ be continuously prox-regular at $\bar{x}\in\dom\varphi$ for $0\in\partial\varphi(\bar{x})$. Then the following assertions are equivalent:
		
		{\bf(i)}  $\varphi$ is variationally strongly convex at $\ox$ for $\ov=0$ with modulus $\sigma>0$. 
		
		{\bf (ii)} $\ox$ is a tilt-stable local minimizer of $\varphi$ with modulus $\sigma^{-1}$. 
		
		{\bf (iii)} $\partial \varphi$ is locally strongly monotone around $(\ox,0)$ with modulus $\sigma >0$. 
	\end{Proposition}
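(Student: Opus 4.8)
The plan is to establish the cycle $(i)\Rightarrow(iii)\Rightarrow(ii)\Rightarrow(i)$, exploiting the continuous prox-regularity hypothesis to move freely between ``$\varphi$-local'' conditions and genuinely local ones around $(\ox,0)$, and to invoke the Moreau-envelope characterization of variational strong convexity established earlier in the paper (Section~\ref{sec:Moreau1}) together with Proposition~\ref{C11}. Recall that under subdifferential continuity the $\varphi$-attentive localization may be replaced by $\partial\varphi$ itself, and the qualifier ``$\varphi(x)<\varphi(\ox)+\epsilon$'' may be dropped in the definitions of prox-regularity, variational (strong) convexity, and $\varphi$-local monotonicity; so all three notions in the statement live on an honest product neighborhood $U\times V$ of $(\ox,0)$.

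For $(i)\Rightarrow(iii)$: if $\varphi$ is variationally strongly convex at $\ox$ for $0$ with modulus $\sigma$, then by Definition~\ref{vr} there is a convex neighborhood $U\times V$ and an l.s.c.\ strongly convex function $\psi$ with modulus $\sigma$ agreeing with $\varphi$ (in value and subgradients) on the relevant localization. Strong convexity of $\psi$ with modulus $\sigma$ forces its subgradient mapping $\partial\psi$ to be strongly monotone with modulus $\sigma$ globally, hence relative to $U\times V$; transferring this through \eqref{varconvex} (using subdifferential continuity to pass to the full neighborhood) gives local strong monotonicity of $\partial\varphi$ around $(\ox,0)$ with modulus $\sigma$, which is $(iii)$.

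For $(iii)\Rightarrow(i)$: this is the step I expect to be the crux, and it is essentially Rockafellar's result \cite[Theorem~1]{r19} (cited already in the excerpt for the log-sum example) upgraded to keep track of the modulus; one applies that theorem to $\varphi-\frac{\sigma}{2}\|\cdot\|^2$, which is continuously prox-regular at $\ox$ for $0$ whenever $\varphi$ is, and whose subgradient mapping $\partial\varphi-\sigma I$ is (plain) locally monotone around $(\ox,0)$ precisely when $\partial\varphi$ is locally strongly monotone with modulus $\sigma$. That theorem then yields variational convexity of $\varphi-\frac{\sigma}{2}\|\cdot\|^2$ at $\ox$ for $0$, i.e.\ a locally convex $\psi_0\le\varphi-\frac{\sigma}{2}\|\cdot\|^2$ satisfying \eqref{varconvex}; setting $\psi:=\psi_0+\frac{\sigma}{2}\|\cdot\|^2$, which is strongly convex with modulus $\sigma$ and still satisfies \eqref{varconvex} for $\varphi$ (the quadratic shift affects value and subgradient identically on both sides), delivers $(i)$. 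The care needed here is to verify that the neighborhoods and the ``$\varphi$-attentive'' bookkeeping survive the quadratic shift — they do, since a $C^2$ perturbation changes neither prox-regularity nor subdifferential continuity nor the topology of the relevant localizations.

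For $(ii)\Leftrightarrow(i)$: the implication $(i)\Rightarrow(ii)$ is already recorded in Remark~\ref{sttilt}(ii) via \cite[Theorems~2.3]{r19}. For the reverse $(ii)\Rightarrow(iii)$, I would use the characterization of tilt stability for continuously prox-regular functions: $\ox$ is a tilt-stable local minimizer of $\varphi$ with modulus $\sigma^{-1}$ iff the $\varphi$-attentive localization $T$ of $\partial\varphi$ around $(\ox,0)$ has $T^{-1}$ Lipschitz with constant $\sigma^{-1}$ and is monotone, equivalently iff $T$ is strongly monotone with modulus $\sigma$ on a neighborhood; under subdifferential continuity this is exactly local strong monotonicity of $\partial\varphi$ around $(\ox,0)$ with modulus $\sigma$, which is $(iii)$. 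Alternatively, and perhaps more transparently given the machinery of this paper, one can route $(ii)$ through Proposition~\ref{C11}: tilt stability of $\ox$ with modulus $\sigma^{-1}$ is equivalent to $\ox+\lambda\cdot 0=\ox$ being an unconstrained local minimizer of the $C^{1,1}$ Moreau envelope $e_\lambda\varphi$ with $\nabla e_\lambda\varphi$ locally strongly monotone of the appropriate modulus, which by the Section~\ref{sec:Moreau1} characterization is equivalent to variational strong convexity of $\varphi$. Thus the main obstacle is packaging the already-available tilt-stability/monotonicity equivalences with explicit moduli; the rest is routine transfer of properties across \eqref{varconvex} under subdifferential continuity.
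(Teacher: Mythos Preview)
Your cycle $(i)\Leftrightarrow(iii)$ is correct and essentially reproduces \cite[Theorem~2]{r19} (restated in the paper as Lemma~\ref{strongconvex}) via the quadratic-shift reduction to \cite[Theorem~1]{r19}; the paper simply cites that result directly, so this part is fine though longer than necessary.

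The genuine gap is in $(ii)\Rightarrow(iii)$. Your Route~A asserts that for a monotone localization $T$ of $\partial\varphi$, the condition ``$T^{-1}$ Lipschitz with constant $\sigma^{-1}$'' is equivalent to ``$T$ strongly monotone with modulus $\sigma$.'' This fails for general monotone operators: take $T(x)=(I+\alpha J)x$ on $\R^2$ with $J$ the rotation by $\pi/2$; then $T$ is strongly monotone with exact modulus~$1$, while $T^{-1}$ is Lipschitz with constant $(1+\alpha^2)^{-1/2}$, so the reciprocals do not match. The equivalence \emph{does} hold for subgradients of convex functions (via the duality $\partial\psi^*=(\partial\psi)^{-1}$ and the standard strong-convexity/$\mathcal{C}^{1,1}$ correspondence), but invoking that here presupposes the local convexity you are trying to establish. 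Your Route~B via Moreau envelopes has the same modulus-matching problem: you would need tilt stability with modulus $\sigma^{-1}$ to force $\nabla e_\lambda\varphi$ to be strongly monotone with the \emph{specific} modulus $\sigma/(1+\sigma\lambda)$ required by Theorem~\ref{stronMore}, and Proposition~\ref{C11} alone does not deliver this.

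The paper closes the loop differently, using the \emph{uniform second-order growth condition} as the bridge. Under continuous prox-regularity, Lemma~\ref{strongconvex}(iii) (with $U_\epsilon$ replaced by $U$) says that $(i)$ is equivalent to
\[
\varphi(x)\ge\varphi(u)+\langle v,x-u\rangle+\tfrac{\sigma}{2}\|x-u\|^2\quad\text{for all }x\in U,\ (u,v)\in\gph\partial\varphi\cap(U\times V),
\]
and \cite[Theorem~3.2]{MorduNghia} establishes that this growth condition is equivalent to tilt stability of $\ox$ with modulus $\sigma^{-1}$, with the moduli matching exactly. That external result is the missing ingredient your argument needs.
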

	\begin{proof} Due to the continuous prox-regularity of $\varphi$ at $\ox$ for $0$ and \cite[Lemma~\ref{strongconvex}]{r19}, the variational strong convexity of $\varphi$ at $\ox$ for $0$ with modulus $\sigma >0$ is equivalent to the existence of neighborhoods $U$ of $\ox$ and $V$ of $0$ such that 
		$$
		\varphi(x)\ge\varphi(u)+\langle v,x-u\rangle+\frac{\sigma}{2} \|x-u\|^2\;\mbox{ for all }\;x\in U,\;(u,v)\in\gph\partial\varphi\cap(U\times V).
		$$
		The latter condition is equivalent to $\ox$ being a tilt-stable local minimizer of $\varphi$ with modulus $\sigma^{-1}$ by \cite[Theorem~3.2]{MorduNghia}, which justifies therefore the equivalence between (i) and (ii). The equivalence between the assertions (i) and (iii) follows from \cite[Theorem 2]{r19}.  
	\end{proof}
	
	\begin{Remark}[\bf on second-order characterizations of variational strong convexity] \rm It is known that the tilt stability of a continuously prox-regular function $\varphi$ at $\ox$ for $0$ is equivalent to the positive-definiteness of the second-order subdifferential $\partial^2\varphi(\ox,0)$; see \cite{Poli}. Therefore, by the equivalent descriptions of the variational strong convexity in Proposition \ref{equitiltstr}, we can immediately obtain the second-order characterizations for variational strong convexity at $\ox$ for $\ov=0$ in the case where $\varphi$ is continuously prox-regular. It is shown in Section~\ref{sec:codcharacterizationconvex} that we can obtain the coderivative-based characterizations of both variational convexity and variational strong convexity {\em without} using the characterizations of tilt stability. This new approach is mainly based on the usage of the ``hidden convexity" of Moreau envelopes established in the next section.
	\end{Remark}
\vspace*{-0.15in}

\color{black}
\section{Variational Convexity via Moreau Envelopes}\label{sec:Moreau}\vspace*{-0.05in}
\setcounter{equation}{0}

As mentioned in Section~\ref{sec:prelim}, a variationally convex function around a given point is not necessarily convex around this point. In this section, we  show that the variational convexity of an l.s.c.\ function is {\em equivalent} to  the local convexity of its {\em Moreau envelope}.  This kind of ``hidden convexity" is not only significant for its own sake, but also plays a crucial role in establishing the new characterizations of variational convexity via second-order subdifferentials \eqref{limitsec} and \eqref{seccombine}, which are provided in Section~\ref{sec:codcharacterizationconvex}.\vspace*{0.03in}

To proceed, we begin with the following lemma taken from \cite[Theorem 1]{r19}, which lists the characterizations of variationally convex functions via some properties of  {their limiting subdifferentials \eqref{MordukhovichSubdifferential}.}

\begin{Lemma}[\bf subgradient characterizations of variational convexity]\label{monoviaMoreau}	Let $\varphi:\R^n\to\overline{\R}$  be an l.s.c.  function with $\bar{x}\in\dom\varphi$ and $\bar{v}\in\widehat{\partial}\varphi(\bar{x})$. The  following assertions are equivalent:

{\bf(i)} $\varphi$ is variationally convex at $\ox$ for $\ov$.

{\bf(ii)} $\partial\varphi$ is $\varphi$-locally monotone around $(\bar{x},\bar{v})$.

{\bf(iii)} There are neighborhoods $U$ of $\bar{x}$, $V$ of $\bar{v}$, and a number $\epsilon>0$ such that 
\begin{equation}\label{convexinequa}
\varphi(x)\ge\varphi(u)+\langle v, x-u\rangle\quad\text{for all }\;x\in U,\;(u,v)\in\gph\partial\varphi\cap (U_\epsilon\times V),
\end{equation}
where $U_\epsilon:=\{u\in U\;|\;\varphi(x)<\varphi(\ox)+\epsilon\}$. 
\end{Lemma}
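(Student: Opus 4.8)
The statement to be proved is Lemma~\ref{monoviaMoreau}, asserting the equivalence of (i) variational convexity of $\varphi$ at $\ox$ for $\ov$, (ii) $\varphi$-local monotonicity of $\partial\varphi$ around $(\ox,\ov)$, and (iii) the subgradient inequality \eqref{convexinequa} on suitable neighborhoods. Since the lemma is explicitly attributed to \cite[Theorem~1]{r19}, the most honest thing to do is to cite it; but if a self-contained argument is wanted, here is how I would organize it. I would prove the cycle (i)$\Rightarrow$(iii)$\Rightarrow$(ii)$\Rightarrow$(i).

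\textbf{(i)$\Rightarrow$(iii).} Assume $\varphi$ is variationally convex at $\ox$ for $\ov$, so there is a convex neighborhood $U\times V$ of $(\ox,\ov)$, a number $\epsilon>0$, and an l.s.c.\ convex $\psi\le\varphi$ on $U$ satisfying \eqref{varconvex}. Shrinking $U$ if needed so that it is convex, I would use the global subgradient inequality for the convex function $\psi$, namely $\psi(x)\ge\psi(u)+\langle v,x-u\rangle$ for all $x\in U$ and all $(u,v)\in\gph\partial\psi$. For a common element $(u,v)\in(U_\epsilon\times V)\cap\gph\partial\varphi=(U\times V)\cap\gph\partial\psi$ one has $\psi(u)=\varphi(u)$, and since $\psi\le\varphi$ on $U$, the inequality $\varphi(x)\ge\psi(x)\ge\psi(u)+\langle v,x-u\rangle=\varphi(u)+\langle v,x-u\rangle$ gives \eqref{convexinequa} directly. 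This step is essentially bookkeeping with the definition.

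\textbf{(iii)$\Rightarrow$(ii).} Given the inequality \eqref{convexinequa} on $U$, $V$, $\epsilon$, take two pairs $(u_1,v_1),(u_2,v_2)$ in $\gph\partial\varphi$ lying in the set $W$ of \eqref{W}, i.e.\ with $u_i\in U$, $v_i\in V$, $\varphi(u_i)<\varphi(\ox)+\epsilon$. Applying \eqref{convexinequa} twice — once with $(x,u,v)=(u_2,u_1,v_1)$ and once with $(x,u,v)=(u_1,u_2,v_2)$ — and adding yields $0\ge\langle v_1-v_2,u_2-u_1\rangle+\langle v_2-v_1,u_1-u_2\rangle$ rearranged into $\langle v_1-v_2,u_1-u_2\rangle\ge 0$, which is exactly monotonicity of $\partial\varphi$ relative to $W$, hence $\varphi$-local monotonicity around $(\ox,\ov)$.

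\textbf{(ii)$\Rightarrow$(i).} This is the substantive implication and the main obstacle. Starting from $\varphi$-local monotonicity of $\partial\varphi$ on a set $W$ as in \eqref{W}, I must manufacture a genuinely convex function $\psi\le\varphi$ agreeing with $\varphi$ along the graph of $\partial\varphi$ inside $U\times V$. The natural route is: first observe that $\varphi$-local monotonicity forces prox-regularity of $\varphi$ at $\ox$ for $\ov$ (argue as in Remark~\ref{disvar}(iii), or derive it from the monotone estimate together with l.s.c.), so Proposition~\ref{C11} applies. Then, for a small $\lambda>0$, consider the $\varphi$-attentive localization $T$ of $\partial\varphi$ and the Moreau envelope $e_\lambda\varphi$, which is $\mathcal{C}^{1,1}$ on a convex neighborhood $U_\lambda$ of $\ox+\lambda\ov$ with $\nabla e_\lambda\varphi=(\lambda I+T^{-1})^{-1}$ by \eqref{GradEnvelope}. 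Monotonicity of $T$ on $W$ translates, via the resolvent identity, into monotonicity of $\nabla e_\lambda\varphi$ on $U_\lambda$, i.e.\ $e_\lambda\varphi$ is convex there; and because $\nabla e_\lambda\varphi=\tfrac1\lambda(x-\mathrm{Prox}_{\lambda\varphi}(x))$ with $\mathrm{Prox}_{\lambda\varphi}$ single-valued Lipschitz, one can reconstruct $\varphi$ near $\ox$ from $e_\lambda\varphi$ via the relation $\varphi(\mathrm{Prox}_{\lambda\varphi}(x))=e_\lambda\varphi(x)-\tfrac1{2\lambda}\|x-\mathrm{Prox}_{\lambda\varphi}(x)\|^2$. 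Defining $\psi$ on $U$ by pushing forward the convex function $e_\lambda\varphi$ through this correspondence (equivalently, $\psi:=(e_\lambda\varphi)$ deconvolved, i.e.\ the function whose Moreau $\lambda$-envelope locally equals $e_\lambda\varphi$), one gets an l.s.c.\ convex $\psi$ with $\psi\le\varphi$ on $U$ and $\psi=\varphi$ on the common graph elements, establishing \eqref{varconvex}. The delicate points are: checking that the localization neighborhoods can be matched up ($U\times V$ versus $U_\lambda$ versus the attentive $\epsilon$), ensuring $\psi$ is convex globally on a full neighborhood $U$ (not merely along a curve), and verifying the equality $\varphi=\psi$ precisely on $(U_\epsilon\times V)\cap\gph\partial\varphi$ — all of which is exactly the content of \cite[Theorem~1]{r19}, so in the paper I would simply invoke that reference rather than reproduce the full construction.
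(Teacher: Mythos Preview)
The paper does not give its own proof of this lemma: it is stated as ``taken from \cite[Theorem~1]{r19}'' and simply cited. You correctly identify this, and your concluding recommendation---to invoke that reference---matches exactly what the paper does.

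Your additional self-contained sketch is mostly sound. The implications (i)$\Rightarrow$(iii) and (iii)$\Rightarrow$(ii) are routine and your arguments for them are correct. The implication (ii)$\Rightarrow$(i), however, has a real gap and also differs in spirit from Rockafellar's argument. You write that ``$\varphi$-local monotonicity forces prox-regularity of $\varphi$ at $\ox$ for $\ov$ (argue as in Remark~\ref{disvar}(iii))'', but Remark~\ref{disvar}(iii) deduces prox-regularity from \emph{variational convexity} (assertion (i)), not from $\varphi$-local monotonicity (assertion (ii)); in your cycle (i)$\Rightarrow$(iii)$\Rightarrow$(ii)$\Rightarrow$(i) you do not have (i) or (iii) available at this step. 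Without prox-regularity you cannot invoke Proposition~\ref{C11}, so the Moreau-envelope machinery you propose does not get off the ground. Moreover, even granting prox-regularity, your deconvolution step---producing a convex $\psi$ from the locally convex $e_\lambda\varphi$ and checking it satisfies \eqref{varconvex}---is precisely the delicate content you would need to supply, and it is not clear this is easier than the original problem. Note too that within the paper, using Moreau-envelope convexity to prove Lemma~\ref{monoviaMoreau} would sit awkwardly, since Theorem~\ref{convexviaMo} uses Lemma~\ref{monoviaMoreau} to go the other way.

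For comparison, Rockafellar's proof of (ii)$\Rightarrow$(i) in \cite{r19} does not pass through Moreau envelopes. It extends the $\varphi$-attentive localization $T$ of $\partial\varphi$ to a maximal monotone operator and then exploits the fact that this extension is the subdifferential of a proper l.s.c.\ convex function, which furnishes the $\psi$ in Definition~\ref{vr} directly.
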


Here is the aforementioned interconnection of variational convexity of extended-real-valued functions and local convexity of their Moreau envelopes. 

\begin{Theorem}[\bf characterization of variational convexity via Moreau envelopes] \label{convexviaMo} Let $\varphi:\R^n\to\overline{\R}$  be an l.s.c.\ and prox-bounded function with $\bar{x}\in\dom\varphi$ and $\bar{v}\in\partial\varphi(\bar{x})$. The following assertions are equivalent: 

{\bf(i)} $\varphi$ is variationally convex at $\ox$ for $\ov$.

{\bf(ii)} $\varphi$ is prox-regular at $\ox$ for $\ov$, and the Moreau envelope $e_\lambda \varphi$ is locally convex around $\bar{x}+\lambda\bar{v}$ for small $\lambda>0$. 
\end{Theorem}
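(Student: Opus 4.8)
The plan is to prove both implications by exploiting the gradient formula $\nabla e_\lambda\varphi = (\lambda I + T^{-1})^{-1}$ from Proposition~\ref{C11}, together with the standard fact that a $\mathcal{C}^1$ function on a convex open set is convex if and only if its gradient is a monotone mapping, and strongly convex iff the gradient is strongly monotone. The key point connecting the two sides is that $(\lambda I + T^{-1})^{-1}$ is a (single-valued, Lipschitz) monotone mapping precisely when $T^{-1}$ — equivalently $T$ — is monotone; this is the resolvent relationship at the heart of the Minty parametrization. So the technical core of the argument is to translate ``$\varphi$-local monotonicity of $\partial\varphi$ around $(\ox,\ov)$'' (which by Lemma~\ref{monoviaMoreau} is equivalent to variational convexity) into ``monotonicity of $\nabla e_\lambda\varphi$ on a neighborhood of $\ox+\lambda\ov$,'' and back.

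For the implication (i)$\Rightarrow$(ii): Remark~\ref{disvar}(iii) already gives prox-regularity of $\varphi$ at $\ox$ for $\ov$ for free from variational convexity. Since $\varphi$ is l.s.c., prox-bounded, and prox-regular at $\ox$ for $\ov$, Proposition~\ref{C11} applies: there is a $\varphi$-attentive $\epsilon$-localization $T$ of $\partial\varphi$ so that for all small $\lambda>0$, $e_\lambda\varphi$ is $\mathcal{C}^{1,1}$ on a convex neighborhood $U_\lambda$ of $\ox+\lambda\ov$ with $\nabla e_\lambda\varphi(x) = (\lambda I + T^{-1})^{-1}(x)$ there. By Lemma~\ref{monoviaMoreau}(ii), shrinking the localization neighborhood if necessary, we may arrange that $T$ (hence $\gph T$, hence $\gph T^{-1}$) is monotone. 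Then for $x_1,x_2\in U_\lambda$, writing $y_i=\nabla e_\lambda\varphi(x_i)$ so that $x_i = \lambda y_i + T^{-1}(y_i)$ with $y_i\in T(\cdot)$ at the corresponding points, we get $\langle x_1-x_2, y_1-y_2\rangle = \lambda\|y_1-y_2\|^2 + \langle T^{-1}(y_1)-T^{-1}(y_2), y_1-y_2\rangle \ge \lambda\|y_1-y_2\|^2 \ge 0$ by monotonicity of $T^{-1}$. Hence $\nabla e_\lambda\varphi$ is monotone on the convex set $U_\lambda$, so $e_\lambda\varphi$ is convex there, which is the claimed local convexity around $\ox+\lambda\ov$.

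For the converse (ii)$\Rightarrow$(i): Now $\varphi$ is assumed prox-regular at $\ox$ for $\ov$, so Proposition~\ref{C11} again furnishes the localization $T$, the neighborhoods $U_\lambda$, and the formula $\nabla e_\lambda\varphi = (\lambda I + T^{-1})^{-1}$ on $U_\lambda$; moreover $\text{Prox}_{\lambda\varphi}$ is single-valued and Lipschitz on $U_\lambda$ and $x - \text{Prox}_{\lambda\varphi}(x) = \lambda\nabla e_\lambda\varphi(x)$. Convexity of $e_\lambda\varphi$ on $U_\lambda$ gives monotonicity of $\nabla e_\lambda\varphi$ there. Now run the computation above in reverse: for $(u_i,v_i)\in\gph T$ lying in a suitable neighborhood of $(\ox,\ov)$, the points $x_i := u_i + \lambda v_i$ lie in $U_\lambda$ (for $\lambda$ small, by continuity of the localization), one has $\text{Prox}_{\lambda\varphi}(x_i)=u_i$ and $\nabla e_\lambda\varphi(x_i)=v_i$, so monotonicity of $\nabla e_\lambda\varphi$ yields $0\le \langle x_1-x_2, v_1-v_2\rangle = \langle u_1-u_2, v_1-v_2\rangle + \lambda\|v_1-v_2\|^2$, and we need to upgrade this to $\langle u_1-u_2,v_1-v_2\rangle\ge 0$. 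The clean way is to use the full resolvent identity: monotonicity of $x\mapsto \text{Prox}_{\lambda\varphi}(x)$ (asserted in Proposition~\ref{C11}(ii)) together with $\text{Prox}_{\lambda\varphi} = (I+\lambda T)^{-1}$ shows directly that $T$ is monotone on the relevant neighborhood; indeed if $u_i = \text{Prox}_{\lambda\varphi}(x_i)$ and $v_i=(x_i-u_i)/\lambda\in T(u_i)$, then $\langle u_1-u_2, v_1-v_2\rangle = \tfrac1\lambda\langle u_1-u_2, (x_1-u_1)-(x_2-u_2)\rangle = \tfrac1\lambda\big(\langle u_1-u_2,x_1-x_2\rangle - \|u_1-u_2\|^2\big) \ge 0$ because $\langle u_1-u_2, x_1-x_2\rangle = \langle \text{Prox}_{\lambda\varphi}(x_1)-\text{Prox}_{\lambda\varphi}(x_2), x_1-x_2\rangle \ge \|\text{Prox}_{\lambda\varphi}(x_1)-\text{Prox}_{\lambda\varphi}(x_2)\|^2$ (firm nonexpansiveness of the proximal map of a function whose Moreau envelope is convex — equivalently the $1$-cocoercivity that follows from Lipschitz continuity of $\nabla e_\lambda\varphi$ with constant $1/\lambda$ together with its monotonicity). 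This exhibits $T = \partial\varphi$-attentively-localized as $\varphi$-locally monotone around $(\ox,\ov)$, and Lemma~\ref{monoviaMoreau} then gives variational convexity at $\ox$ for $\ov$.

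The main obstacle, and the place to be careful, is bookkeeping of \emph{neighborhoods and attentive localizations}: Lemma~\ref{monoviaMoreau} speaks of $\varphi$-local monotonicity relative to a set $W = \{(x,y)\in U\times V \mid \varphi(x)<\varphi(\ox)+\epsilon\}$ with the $\varphi$-attentive restriction, whereas Proposition~\ref{C11} produces its own $\varphi$-attentive localization $T$; one must check these are compatible — i.e. that the pairs $(u_i,v_i)$ arising as $(\text{Prox}_{\lambda\varphi}(x_i), \nabla e_\lambda\varphi(x_i))$ for $x_i$ near $\ox+\lambda\ov$ indeed satisfy the attentive constraints $|\varphi(u_i)-\varphi(\ox)|<\epsilon$ (or the one-sided version for l.s.c. $\varphi$), which follows from continuity of $\text{Prox}_{\lambda\varphi}$ together with l.s.c. and the estimate $e_\lambda\varphi(x_i)\le \varphi(u_i)+\tfrac1{2\lambda}\|u_i-x_i\|^2$ forcing $\varphi(u_i)\to\varphi(\ox)$ as $x_i\to\ox+\lambda\ov$. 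Everything else is the resolvent calculus above. For the ``for small $\lambda>0$'' quantifier in (ii), note both directions are uniform: once variational convexity/monotonicity holds, the argument works simultaneously for every sufficiently small $\lambda$, and conversely local convexity of $e_\lambda\varphi$ for one such $\lambda$ suffices to recover variational convexity — so the statement is consistent.
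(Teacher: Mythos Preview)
Your argument for (i)$\Rightarrow$(ii) is essentially the paper's: both use Lemma~\ref{monoviaMoreau}(ii) to get $\varphi$-local monotonicity of $\partial\varphi$, feed this through the resolvent formula $\nabla e_\lambda\varphi=(\lambda I+T^{-1})^{-1}$ from Proposition~\ref{C11}, and read off monotonicity of $\nabla e_\lambda\varphi$ (hence convexity of $e_\lambda\varphi$) from the identity $\langle x_1-x_2,y_1-y_2\rangle=\lambda\|y_1-y_2\|^2+\langle(x_1-\lambda y_1)-(x_2-\lambda y_2),y_1-y_2\rangle$. The paper is more careful with the $\varphi$-attentive bookkeeping you flag at the end (showing explicitly that $\psi_\lambda(x):=e_\lambda\varphi(x)-\tfrac{1}{2\lambda}\|{\rm Prox}_{\lambda\varphi}(x)-x\|^2=\varphi({\rm Prox}_{\lambda\varphi}(x))$ is continuous at $\ox+\lambda\ov$, so the attentive constraint $\varphi(u)<\varphi(\ox)+\epsilon$ is met), but structurally it is the same.

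For (ii)$\Rightarrow$(i) your route genuinely differs. The paper aims for Lemma~\ref{monoviaMoreau}(iii): it applies the subgradient inequality $e_\lambda\varphi(x')\ge e_\lambda\varphi(u')+\langle\nabla e_\lambda\varphi(u'),x'-u'\rangle$ at the translated points $x'=x+\lambda v$, $u'=u+\lambda v$, uses $\nabla e_\lambda\varphi(u+\lambda v)=v$, ${\rm Prox}_{\lambda\varphi}(u+\lambda v)=u$ and $e_\lambda\varphi(u+\lambda v)=\varphi(u)+\tfrac{\lambda}{2}\|v\|^2$, and after cancelling $\tfrac{\lambda}{2}\|v\|^2$ lands directly on $\varphi(x)\ge\varphi(u)+\langle v,x-u\rangle$. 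No cocoercivity is needed. You instead target Lemma~\ref{monoviaMoreau}(ii), trying to prove $\langle u_1-u_2,v_1-v_2\rangle\ge 0$ directly; as you correctly note, mere monotonicity of $\nabla e_\lambda\varphi$ only gives $\langle u_1-u_2,v_1-v_2\rangle+\lambda\|v_1-v_2\|^2\ge 0$, so you upgrade via firm nonexpansiveness of ${\rm Prox}_{\lambda\varphi}$.

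There is a gap here: firm nonexpansiveness is \emph{not} asserted by Proposition~\ref{C11}(ii), which gives only monotonicity and Lipschitz continuity of the proximal map (with no constant specified for $\nabla e_\lambda\varphi$). Your appeal to Baillon--Haddad requires $\nabla e_\lambda\varphi$ to be $1/\lambda$-Lipschitz on $U_\lambda$, and that is not free in the prox-regular setting. It is, however, recoverable once you assume local convexity of $e_\lambda\varphi$: from the Moreau envelope definition one always has the quadratic upper bound
\[
e_\lambda\varphi(x')\le e_\lambda\varphi(x)+\langle\nabla e_\lambda\varphi(x),x'-x\rangle+\tfrac{1}{2\lambda}\|x'-x\|^2\quad\text{for }x,x'\in U_\lambda,
\]
and for a convex $\mathcal{C}^1$ function this upper bound is equivalent to $1/\lambda$-Lipschitzness of the gradient (and then to $\lambda$-cocoercivity by Baillon--Haddad, which localizes to convex open sets since its proof only uses segments). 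With this line inserted your argument is complete. The paper's route is shorter precisely because it sidesteps this detour: working with the function-value inequality in Lemma~\ref{monoviaMoreau}(iii) rather than the monotonicity form (ii) lets one use convexity of $e_\lambda\varphi$ once, without ever quantifying the Lipschitz constant of its gradient.
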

\begin{proof} By Remark~\ref{disvar}(iii), the variational convexity of a function yields its prox-regularity. Therefore, we only need to verify that assertion  (i) is equivalent to the fact that the Moreau envelope $e_\lambda\varphi$ is locally convex around $\ox+\lambda\ov$ for all $\lm>0$ sufficiently small, provided that $\varphi$ is prox-regular at $\ox$ for $\ov$. Observe first that it follows directly from definitions \eqref{prox} of prox-regularity and \eqref{FrechetSubdifferential} of regular subgradients that $\ov\in\widehat{\partial}\varphi(\ox)$. Proposition~\ref{C11} tells us that the assumed lower semicontinuity, prox-boundedness, and prox-regularity properties of $\ph$ guarantee the existence of $\lambda_0>0$ and a $\varphi$-attentive $\gamma$-localization $T:\R^n\rightrightarrows\R^n$ given by
\begin{equation}\label{att} 
T(x):=\begin{cases}\big\{v\in\partial\varphi(x)\big|\;\|v-\ov\|<\gamma\big\} & \text{if}\quad\|x-\ox\|<\gamma\;\text{ and }\;\varphi(x)<\varphi(\ox)+\gamma,\\
\emp &\text{otherwise}
\end{cases}
\end{equation} 
such that for any $\lambda\in(0,\lambda_0)$ there is a neighborhood $U_\lambda$ of  $\bar{x}+\lambda\bar{v}$ on which $e_\lambda\varphi$ is of class $\mathcal{C}^{1,1}$, that the proximal mapping $\text{\rm Prox}_{\lambda\varphi}$ is single-valued and Lipschitz continuous on $U_\lambda$ with ${\rm Prox}_{\lm\ph}(\ox+\lm\ov)=\ox$, and that the gradient expression for the Moreau envelope \eqref{GradEnvelope} holds with $T$ taken from \eqref{att}.

To verify (i)$\Longrightarrow$(ii), suppose that $\varphi$ is variationally convex at $\ox$ for $\ov$ and deduce from Lemma~\ref{monoviaMoreau} that $\partial\varphi$ is  $\varphi$-locally monotone around $(\bar{x},\bar{v})$. Then there exists $\epsilon>0$  such that the subgradient mapping $\partial\varphi$ is monotone relative to the set $W$ from \eqref{W}. Fixing any $\lambda\in(0,\lambda_0)$, we get from  \eqref{GradEnvelope} that
\begin{equation}\label{I-e}
(I-\lambda\nabla e_\lambda\varphi)(\ox+\lambda \ov)=\ox\quad\text{and }\;\nabla e_\lambda\varphi(\ox+\lambda\ov)=\ov.
\end{equation} 
It follows from \eqref{I-e}  and from the continuity of the mappings $\psi_\lambda:=e_\lambda\varphi -\frac{1}{2\lambda}\|\text{\rm Prox}_{\lambda\varphi}(\cdot)-\cdot \|^2$ and $\nabla  e_\lambda\varphi$, $I-\lambda\nabla e_\lambda\varphi$ around $\bar{x}+\lambda \bar{v}$ that there exists a neighborhood $U\subset U_\lambda$ of $\ox+\lambda \ov$ such that
\begin{equation}\label{Tcont}
\big((I-\lambda\nabla e_\lambda\varphi)(x)\times\nabla e_\lambda\varphi(x)\big)\in \mathbb{B}_\epsilon(\ox)\times \mathbb{B}_\epsilon(\ov)\quad\text{and }\; \psi_\lambda(x)-\psi_\lambda(\ox+\lambda\ov)<\epsilon\quad \text{for all }\;x\in U. 
\end{equation}
Definitions \eqref{Moreau}, \eqref{ProxMapping} and the above construction of $\psi_\lm$ ensure that $\psi_\lambda(x)=\varphi(\text{\rm Prox}_{\lambda\varphi}(x) )$ for all $x\in U$. Fixing now any $x_1, x_2 \in U$ and using \eqref{GradEnvelope} and \eqref{Tcont}, we get for $i=1,2$ the relationships
\begin{equation*}
\nabla e_\lambda\varphi(x_i)\in T\big(x_i-\lambda\nabla e_\lambda \varphi(x_i)\big),
\end{equation*}
\begin{equation*}
\begin{array}{ll}
\varphi\big(x_i-\lambda\nabla e_\lambda\varphi(x_i)\big)=\varphi\big((\text{\rm Prox}_{\lambda\varphi}(x_i)\big)=\psi_\lambda(x_i)\\
<\psi_\lambda(\ox+\lambda\ov)+\epsilon=\varphi\big(\text{\rm Prox}_{\lambda \varphi}(\ox+\lambda\ov)\big)+\epsilon=\varphi(\ox)+\epsilon.
\end{array}
\end{equation*}
By \eqref{W}, this justifies the inclusions $(x_i-\lambda\nabla e_\lambda\varphi(x_i), \nabla e_\lambda\varphi(x_i))\in W\cap\gph T$ for $i=1,2$. Combining the latter with the monotonicity of $\partial\varphi$ relative to $W$, we arrive at 
$$
\big\langle\nabla e_\lambda\varphi(x_1)-\nabla e_\lambda\varphi(x_2),x_1- x_2\big\rangle\ge\lambda\|\nabla e_\lambda\varphi(x_1)-\nabla e_\lambda\varphi(x_2)\|^2\ge 0, 
$$
which verifies the convexity of $e_\lambda\varphi$ on $U$ due to \cite[Theorem~4.1.4]{hl04}, and therefore (ii) holds.\vspace*{0.03in} 

To prove next the reverse implication (ii)$\Longrightarrow$(i), assume that the Moreau envelope $e_\lambda \varphi$ is locally convex around $\ox+\lambda\ov$ for $\lm>0$ sufficiently small. Fixing $\lambda\in(0,\lambda_0)$, suppose without loss of generality that $e_\lambda\varphi$ is convex on $U_\lambda$. Utilizing the first-order characterization of ${\cal C}^1$-smooth convex functions in \cite[Theorem~4.1.1]{hl04} gives us
\begin{equation}\label{convexMoreau}
e_\lambda\varphi(x)\geq e_\lambda\varphi(u)+\langle\nabla e_\lambda\varphi(u),x-u\rangle\;\mbox{ for all }\;x,u\in U_\lambda. 
\end{equation} 
Select neighborhoods $U$  of $\ox$ and $V$ of $\bar{v}$ such that $U\subset{\rm int}\,\mathbb{B}_\gamma(\ox)$, $V\subset{\rm int}\,\mathbb{B}_\gamma (\ov)$, and 
\begin{equation}\label{neighbor1}
x+\lambda v\in U_\lambda\;\mbox{ whenever }\;x \in  U\;\mbox{ and }\;v\in V. 
\end{equation}
By Lemma~\ref{monoviaMoreau}, it suffices to verify that 
\begin{equation}\label{convexineq}
\varphi(x)\geq \varphi(u)+\langle v,x-u \rangle\;\mbox{ for all }\;x \in U,\;(u,v) \in\gph\partial\varphi\cap(U_\gamma\times V),
\end{equation}
where $U_\gamma$ as defined as in Lemma~\ref{monoviaMoreau}. Pick $x\in U$, $(u,v) \in\gph\partial\varphi\cap(U_\gamma\times V)$ and deduce from \eqref{att} that $(u,v) \in\gph T$, which yields $u+\lambda v\in(\lambda I+T^{-1})(v)$. It follows from \eqref{GradEnvelope} and \eqref{neighbor1} that  
$$
\nabla e_\lambda\varphi(u+\lambda v)=(\lambda I+T^{-1})^{-1}(u+\lambda v)= v,
$$
$$
\text{\rm Prox}_{\lambda\varphi}(u+\lambda v)=u+\lambda v-\lambda\nabla e_\lambda\varphi(u+\lambda v)=u,
$$
which implies in turn the equalities 
\begin{equation*}
\begin{array}{ll}
e_\lambda\varphi(u+\lambda v)=\varphi\big(\text{\rm Prox}_{\lambda\varphi}(u+\lambda v)\big)+\disp\frac{1}{2\lambda}\|u+\lambda v-\text{\rm Prox}_{\lambda\varphi}(u+\lambda v)\|^2\\
=\varphi(u)+\disp\frac{1}{2\lambda}\|u+\lambda v-u\|^2
=\varphi(u)+\frac{\lambda}{2}\|v\|^2.
\end{array}
\end{equation*}
By \eqref{convexMoreau}, we get the relationships
\begin{eqnarray*}
\begin{array}{ll}
\varphi(x)+\disp\frac{\lambda}{2}\|v\|^2=\varphi(x)+\frac{1}{2\lambda}\|x+\lambda v -x\|^2\ge e_\lambda\varphi(x+\lambda v)\\
\ge e_\lambda\varphi(u+\lambda v)+\langle\nabla e_\lambda\varphi(u+\lambda v), x+\lambda v-(u+\lambda v)\rangle=\varphi(u)+\disp\frac{\lambda}{2}\|v\|^2+\langle v, x-u\rangle.
\end{array}
\end{eqnarray*}
Subtracting the term $(\lm/2)\|v\|^2$ from both sides of the above inequalities, we arrive at the desired condition \eqref{convexineq} and thus completes the proof of the theorem. 
\end{proof}

Finally in this section, we discuss some applications of Theorem~\ref{convexviaMo} to {\em numerical optimization}.

\begin{Remark}[\bf applications of variational convexity to generalized Newton methods]\label{vc-newton}
{\rm In the recent papers \cite{BorisKhanhPhat,kmptjogo,kmptmp} and \cite{BorisEbrahim}, several {\em generalized Newton methods} of nonsmooth optimization have been designed and justified in the following pattern: dealing first with problems of {\em ${\cal C}^{1,1}$ optimization} and then propagating the algorithms and the imposed assumptions to {\em extended-real-valued prox-regular functions} by using their Moreau envelopes and proximal mappings. In this way, well-posedness and superlinear local and global convergence of the proposed generalized Newton algorithms have been established for problems of minimizing prox-regular functions, and hence for problems of constrained optimization, in terms of their given data as well as of solutions to subproblems in \eqref{Moreau} and/or \eqref{ProxMapping}. Although constructive applications of this procedure to solving some practical models, which appear in machine learning, statistics, etc., are developed in the aforementioned papers, the implementation of the designed algorithms for general classes of prox-regular functions is a challenging issue. However, the {\em variational convexity} of the original cost function allows us to reduce, by Theorem~\ref{convexviaMo}, the minimization subproblems in \eqref{Moreau} and \eqref{ProxMapping} to problems of {\em convex optimization}, which admit the much more elaborated machinery to be constructively resolved. Detailed investigations in this direction will be conducted in our future research.}
\end{Remark}
\vspace*{-0.22in}  

\section{Variational Strong Convexity via Moreau Envelopes}\label{sec:Moreau1}\vspace*{-0.05in}\setcounter{equation}{0}

The main goal of this section is to establish characterizations of {\em variational strong convexity} of extended-real-valued functions via {\em strong convexity} of Moreau envelopes with precise relationships between the corresponding {\em moduli}.\vspace*{0.05in}

We begin with recalling subgradient characterizations of variational strong convexity taken from \cite[Theorem~2]{r19}.\vspace*{-0.03in}

\begin{Lemma}[\bf subgradient characterizations of variational strong convexity]\label{strongconvex} Let $\varphi:\R^n\to\overline{\R}$ be an l.s.c.\  function with $\bar{x}\in\dom\varphi$ and $\bar{v}\in\widehat{\partial}\varphi(\bar{x})$. The following assertions are equivalent:

{\bf(i)} $\varphi$ is  variationally strongly convex at $\ox$ for $\ov$ with modulus $\sigma>0$. 

{\bf(ii)} $\partial\varphi$ is $\varphi$-locally strongly monotone around $(\bar{x},\bar{v})$ with modulus $\sigma>0$.

{\bf(iii)} There exist neighborhoods $U$ of $\bar{x}$, $V$ of $\bar{v}$, and a number $\epsilon>0$ such that 
\begin{equation}\label{convexinequa2}
\varphi(x)\ge\varphi(u)+\langle v,x-u\rangle +\frac{\sigma}{2}\|x-u\|^2\quad \text{for all }\;x\in U,\;(u,v)\in\gph\partial\varphi\cap(U_\epsilon\times V),
\end{equation}
where $U_\epsilon$ is defined in Lemma~{\rm\ref{monoviaMoreau}}.
\end{Lemma}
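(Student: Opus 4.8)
The plan is to deduce the whole chain of equivalences from the non-strong version already established in Lemma~\ref{monoviaMoreau}, applied to the quadratic shift $h:=\varphi-\frac{\sigma}{2}\|\cdot\|^2$. Since $\frac{\sigma}{2}\|\cdot\|^2$ is ${\cal C}^\infty$-smooth, the exact sum rules for regular and limiting subgradients give $\widehat\partial h(x)=\widehat\partial\varphi(x)-\sigma x$ and $\partial h(x)=\partial\varphi(x)-\sigma x$ for every $x\in\R^n$; in particular $h$ is l.s.c.\ with $\ox\in\dom h$ and $\ov-\sigma\ox\in\widehat\partial h(\ox)$, so Lemma~\ref{monoviaMoreau} is available for $h$ at $\ox$ for $\ov-\sigma\ox$ (no prox-boundedness is needed). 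The first task is to check that, under the homeomorphism $\Phi\colon(x,v)\mapsto(x,v-\sigma x)$ of $\R^n\times\R^n$, the three assertions of the present lemma for $\varphi$ with modulus $\sigma$ correspond exactly to assertions (i), (ii), (iii) of Lemma~\ref{monoviaMoreau} for $h$.

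For (i): by Definition~\ref{vr}, $\varphi$ is variationally strongly convex at $\ox$ for $\ov$ with modulus $\sigma$ precisely when some convex neighborhood $U\times V$ of $(\ox,\ov)$, some $\epsilon>0$, and some $\sigma$-strongly convex l.s.c.\ $\psi\le\varphi$ on $U$ satisfy \eqref{varconvex}. Setting $\chi:=\psi-\frac{\sigma}{2}\|\cdot\|^2$, the function $\chi$ is l.s.c.\ convex with $\chi\le h$ on $U$; moreover $\Phi$ maps $\gph\partial\varphi$ onto $\gph\partial h$, $\gph\partial\psi$ onto $\gph\partial\chi$, and $U\times V$ onto a product neighborhood of $(\ox,\ov-\sigma\ox)$, while the identity $\varphi=\psi$ at common elements becomes $h=\chi$. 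The one delicate point is that the sublevel condition $\varphi(x)<\varphi(\ox)+\epsilon$ defining $U_\epsilon$ is not literally the $h$-sublevel condition; but since $x\mapsto\frac{\sigma}{2}\|x\|^2$ is continuous at $\ox$, shrinking $U$ makes $|\frac{\sigma}{2}\|x\|^2-\frac{\sigma}{2}\|\ox\|^2|$ arbitrarily small on $U$, and a matching adjustment of $\epsilon$ trades one sublevel condition for the other in both directions. Hence (i) is equivalent to ``$h$ is variationally convex at $\ox$ for $\ov-\sigma\ox$'', i.e.\ to Lemma~\ref{monoviaMoreau}(i) for $h$.

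For (ii) and (iii): the algebraic identity $\langle v_1-v_2,u_1-u_2\rangle=\langle(v_1-\sigma u_1)-(v_2-\sigma u_2),\,u_1-u_2\rangle+\sigma\|u_1-u_2\|^2$ shows that $\partial\varphi$ is strongly monotone with modulus $\sigma$ relative to a set $W$ exactly when $\partial h$ is (ordinarily) monotone relative to $\Phi(W)$, which together with the sublevel-set bookkeeping above matches (ii) with Lemma~\ref{monoviaMoreau}(ii) for $h$. Likewise, since $\frac{\sigma}{2}\|x\|^2-\frac{\sigma}{2}\|u\|^2-\sigma\langle u,x-u\rangle=\frac{\sigma}{2}\|x-u\|^2$, the plain subgradient inequality $h(x)\ge h(u)+\langle w,x-u\rangle$ with $w=v-\sigma u\in\partial h(u)$ is equivalent to \eqref{convexinequa2}, so (iii) is Lemma~\ref{monoviaMoreau}(iii) for $h$. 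Invoking Lemma~\ref{monoviaMoreau} for $h$ then yields (i)$\Leftrightarrow$(ii)$\Leftrightarrow$(iii).

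I expect the only genuine work to be this ``sublevel-set bookkeeping'': showing that the neighborhoods $U,V$ and the number $\epsilon$ in the three conditions for $\varphi$ can be matched — uniformly and in both directions — with those for $h$, so that a single shift by $\frac{\sigma}{2}\|\cdot\|^2$ carries the full equivalence across with no loss. The remaining ingredients (the exact sum rule for $\partial h$, the two quadratic identities, and the fact that $\Phi$ is a homeomorphism preserving the relevant graphs and product neighborhoods) are routine.
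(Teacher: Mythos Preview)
The paper does not give a proof of this lemma; it records it as ``taken from \cite[Theorem~2]{r19}''. Your reduction to Lemma~\ref{monoviaMoreau} via the quadratic shift $h=\varphi-\tfrac{\sigma}{2}\|\cdot\|^2$ is therefore a genuinely different route, and it is essentially correct. The paper itself uses the same shift idea in Lemma~\ref{cvandstrongcv}, but there it goes in the opposite logical direction: it \emph{assumes} both Lemma~\ref{monoviaMoreau} and Lemma~\ref{strongconvex} in order to deduce ``(i) for $\varphi$ $\Leftrightarrow$ (i) for $h$'', rather than deriving Lemma~\ref{strongconvex} from Lemma~\ref{monoviaMoreau}. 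Your cross-links for (ii) and (iii) are clean (the sublevel bookkeeping works because those conditions are inequalities/containments, so shrinking $U,V,\epsilon$ only helps), and the two quadratic identities you wrote are exactly right.

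One point deserves more care than your sketch gives it. For the (i) cross-link, the condition in Definition~\ref{vr} is an \emph{equality} of sets, so a single choice of $\epsilon$ must serve both inclusions. Merely making $|\tfrac{\sigma}{2}\|x\|^2-\tfrac{\sigma}{2}\|\ox\|^2|$ small on $U$ does not close this: the $\supset$-inclusion forces $\epsilon\ge\tilde\epsilon+C\rho$ while the $\subset$-inclusion forces $\epsilon\le\tilde\epsilon-C\rho$. The missing ingredient is that at common elements $(u,w)$ one has $h(u)=\chi(u)$ and, since $\chi$ is convex with subgradients $w$ lying in the bounded set $\tilde V$, the subgradient inequality at $u$ evaluated at $\ox$ gives $\chi(u)\le\chi(\ox)+\langle w,u-\ox\rangle\le h(\ox)+M\|u-\ox\|$. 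This uniform upper bound lets you shrink $\tilde\epsilon$ (after shrinking $\tilde U$) and then choose a single $\epsilon$ that works. Also, a small imprecision: $\Phi(U\times V)$ is not a product neighborhood but a sheared open set; you need to pass to a product neighborhood inside it, which is routine since $\Phi$ is a linear isomorphism.
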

		
Given an l.s.c.\ function $\varphi:\R^n\to \overline{\R}$ with $\bar{x}\in\dom\varphi$ and $\sigma\ne 0$, consider its {\em $\sigma$-quadratic shift} 
\begin{equation}\label{shifted}
\vt(x):=\varphi(x)-\frac{\sigma}{2}\|x-\ox\|^2\quad\text{for all }\;x\in\R^n.
\end{equation} 

The next lemma presents relationships between variational strong convexity of a function and variational convexity of its quadratic shift \eqref{shifted}. 

\begin{Lemma}[\bf variational strong convexity via quadratic shifts]\label{cvandstrongcv} Let $\varphi:\R^n\to\overline{\R}$ be an l.s.c. function with $\bar{x}\in\dom\varphi$ and $\bar{v}\in\partial\varphi(\bar{x})$, and let $\vt$ be defined in \eqref{shifted} with some $\sigma>0$. Then the following hold:

{\bf(i)} $\partial\varphi$ is $\varphi$-locally strongly monotone around $(\bar{x},\bar{v})$ with modulus $\sigma$ if and only if $\partial\vt$ is $\vt$-locally monotone around the pair $(\bar{x},\bar{v})$. 

{\bf(ii)} $\varphi$ is variationally strongly convex at $\ox$ for $\ov$ with modulus $\sigma$ if and only if the quadratically shifted function $\vt$ is variationally convex at $\ox$ for $\ov$. 
\end{Lemma}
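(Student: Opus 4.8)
The plan is to move everything along the shear map $\Phi\colon\R^n\times\R^n\to\R^n\times\R^n$ given by $\Phi(x,v):=(x,v-\sigma(x-\ox))$. This is a homeomorphism fixing $(\ox,\ov)$ and the first coordinate, and by the elementary first-order sum rule $\partial\vt(x)=\partial\varphi(x)-\sigma(x-\ox)$ (valid because $x\mapsto-\tfrac{\sigma}{2}\|x-\ox\|^2$ is $\mathcal C^\infty$, cf.\ \cite[Exercise~8.8]{Rockafellar98}) it carries $\gph\partial\varphi$ onto $\gph\partial\vt$; the analogous rule for regular subgradients gives $\widehat\partial\vt(\ox)=\widehat\partial\varphi(\ox)$, so the standing assumption $\ov\in\partial\varphi(\ox)$ is the same for $\vt$, and $\ov\in\widehat\partial\varphi(\ox)\iff\ov\in\widehat\partial\vt(\ox)$.

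\textbf{Part (i).} The algebraic heart is the identity
\begin{equation*}
\langle v_1-v_2,\,u_1-u_2\rangle=\langle w_1-w_2,\,u_1-u_2\rangle+\sigma\|u_1-u_2\|^2,\qquad w_i:=v_i-\sigma(u_i-\ox),
\end{equation*}
which already shows that strong monotonicity of $\partial\varphi$ with modulus $\sigma$ on a set $W$ is the same as plain monotonicity of $\partial\vt$ on $\Phi(W)$. To promote this to the $\varphi$-local and $\vt$-local versions, I would start from a convex neighborhood $U\times V$ of $(\ox,\ov)$ and an $\epsilon>0$ witnessing $\varphi$-local strong monotonicity of $\partial\varphi$ on the set $W$ of \eqref{W}, and then pick a smaller convex neighborhood $\tilde U\times\tilde V$ of $(\ox,\ov)$ and $\tilde\epsilon>0$ such that (a) $w+\sigma(x-\ox)\in V$ whenever $x\in\tilde U$, $w\in\tilde V$, and (b) $\tilde\epsilon+\tfrac{\sigma}{2}\|x-\ox\|^2\le\epsilon$ for all $x\in\tilde U$. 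Then any $(u,w)\in\gph\partial\vt$ lying in $\tilde U\times\tilde V$ with $\vt(u)<\vt(\ox)+\tilde\epsilon$ produces $v:=w+\sigma(u-\ox)\in\partial\varphi(u)$ with $(u,v)\in W$ because of (a), (b) and $\varphi(u)=\vt(u)+\tfrac{\sigma}{2}\|u-\ox\|^2$; feeding two such pairs into strong monotonicity of $\partial\varphi$ and using the identity yields $\langle w_1-w_2,u_1-u_2\rangle\ge0$, i.e.\ $\vt$-local monotonicity of $\partial\vt$. The reverse implication is the same computation read backwards and is even a little easier, since $\sigma>0$ forces $\vt\le\varphi$ everywhere, so a constraint $\varphi(u)<\varphi(\ox)+\tilde\epsilon$ automatically gives $\vt(u)<\vt(\ox)+\tilde\epsilon$ and no extra shrinking of the tolerance is needed; the identity then turns monotonicity of $\partial\vt$ into strong monotonicity of $\partial\varphi$ with the same modulus $\sigma$.

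\textbf{Part (ii).} Here I would simply chain Part (i) with the subgradient characterizations already available. If $\varphi$ is variationally strongly convex at $\ox$ for $\ov$ with modulus $\sigma$, then $\varphi$ is prox-regular at $\ox$ for $\ov$ (the computation in Remark~\ref{disvar}(iii) applies, a strongly convex $\psi$ being convex), so inserting $(u,v)=(\ox,\ov)$ into \eqref{prox} gives $\ov\in\widehat\partial\varphi(\ox)=\widehat\partial\vt(\ox)$. Now Lemma~\ref{strongconvex} yields $\varphi$-local strong monotonicity of $\partial\varphi$ with modulus $\sigma$, Part~(i) converts it into $\vt$-local monotonicity of $\partial\vt$, and Lemma~\ref{monoviaMoreau}, applicable since $\ov\in\widehat\partial\vt(\ox)$, returns the variational convexity of $\vt$ at $\ox$ for $\ov$. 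The converse runs the same chain in reverse: variational convexity of $\vt$ forces prox-regularity of $\vt$ and hence $\ov\in\widehat\partial\vt(\ox)=\widehat\partial\varphi(\ox)$, Lemma~\ref{monoviaMoreau} gives $\vt$-local monotonicity of $\partial\vt$, Part~(i) upgrades it to $\varphi$-local strong monotonicity of $\partial\varphi$ with modulus $\sigma$, and Lemma~\ref{strongconvex} delivers the variational strong convexity of $\varphi$ with the same modulus. (One could also argue (ii) directly in the style of Part~(i): if $\psi$ witnesses the variational strong convexity of $\varphi$ on $U$, then $\psi-\tfrac{\sigma}{2}\|\cdot-\ox\|^2$ is convex, sits below $\vt$ on $U$, and has subgradient graph the $\Phi$-image of that of $\psi$, hence after the neighborhood/level-set reconciliation of Part~(i) it witnesses the variational convexity of $\vt$.)

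\textbf{Expected main obstacle.} Everything above is elementary once $\Phi$ is introduced; the one slightly delicate step is that $\Phi$ couples the $v$-coordinate to $x$, so a product neighborhood $U\times V$ is not mapped to a product neighborhood, and the $\varphi$-attentive sublevel set $\{\varphi<\varphi(\ox)+\epsilon\}$ must be matched against $\{\vt<\vt(\ox)+\epsilon\}$. Both are taken care of by shrinking the neighborhood and the tolerance with the single inequality $\tilde\epsilon+\tfrac{\sigma}{2}\|x-\ox\|^2\le\epsilon$ on the $\vt$-side — the bookkeeping spelled out in Part~(i).
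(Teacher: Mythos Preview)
Your proposal is correct and follows essentially the same route as the paper. The shear map $\Phi$ you introduce is exactly the device the paper uses (implicitly, via the affine transformations $L(x,y)=(x,y-\sigma(x-\ox))$ and its inverse), your algebraic identity is the paper's equivalences \eqref{FirstEquivalence}--\eqref{SecondEquivalence}, and your neighborhood/tolerance bookkeeping mirrors the paper's choice $U=\text{int}\,\mathbb{B}_{\sqrt{\epsilon/\sigma}}(\ox)$ together with the halved tolerance $\epsilon/2$ on the $\vt$-side and the observation $\vt\le\varphi$ on the $\varphi$-side. Part~(ii) is deduced from Part~(i) via Lemmas~\ref{monoviaMoreau} and \ref{strongconvex} in both your argument and the paper's.
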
 
\begin{proof} The elementary {sum rule for limiting subdifferentials} from  \cite[Proposition~1.30]{Mor18} gives us 
\begin{equation*}
\partial\vt(x)=\partial\varphi(x)-\sigma(x-\bar{x}) \quad\text{\rm whenever }\;x\in\dom\varphi.
\end{equation*}	
Therefore, we have $\bar{v}\in\partial\vt(\bar{x})$ if and only if $\ov\in\partial\ph(\ox)$. Since (ii) follows directly from (i) by using the equivalences in Lemmas~\ref{monoviaMoreau} and \ref{strongconvex}, it suffices to verify (i). To furnish this, fix $\varepsilon>0$ and denote $U:={\rm int}\,\mathbb{B}_{\sqrt{\epsilon/\sigma}}(\ox)$. Define
$$
U_\varepsilon^\varphi:=\big\{x\in U \;\big|\;\varphi(x)<\varphi(\bar{x})+\varepsilon\big\},
$$
$$
U_{\varepsilon/2}^\vt:=\big\{x\in U\;\big|\; \vt(x)<\vt(\bar{x})+\varepsilon/2\big\},\;\text {and }\;U_{\varepsilon}^\vt:=\big\{x\in U \;\big|\;\vt(x)<\vt(\bar{x})+\varepsilon\big\}.
$$
Given a convex neighborhood $V$ of $\ov$, both sets $V+\sigma(U-\bar{x})$ and $V-\sigma(U-\bar{x})$ are also convex neighborhoods of $\bar{v}$ with the fulfillment of implications
\begin{equation}\label{FirstImplication}
(x,v)\in\gph\partial\vt\cap(U_{\epsilon/2}^\vt\times V)\Longrightarrow\big(x,v+\sigma (x-\ox)\big)\in\gph\partial\varphi\cap\big(U_\varepsilon^\varphi\times[V
+\sigma(U-\bar{x})]\big),
\end{equation}
\begin{equation}\label{SecondImplication}
(x,v)\in\gph\partial\varphi\cap(U_\varepsilon^\varphi\times V)\Longrightarrow \big(x,v-\sigma(x-\ox)\big)\in\gph\partial\vt\cap\big(U_\varepsilon^\vt\times [V-\sigma(U-\bar{x})]\big).
\end{equation}
Furthermore, we have the following equivalences:
\begin{equation}\label{FirstEquivalence}
\langle v_1-v_2,x_1-x_2\rangle\ge 0\Longleftrightarrow 
\langle [v_1+\sigma(x_1-\bar{x})]-[v_2+\sigma(x_2-\bar{x})],x_1-x_2\rangle\ge \sigma\|x_1-x_2\|^2,
\end{equation}
\begin{equation}\label{SecondEquivalence}
\langle v_1-v_2,x_1-x_2\rangle\geq \sigma\|x_1-x_2\|^2\Longleftrightarrow 
\langle[v_1-\sigma(x_1-\bar{x})]-[v_2-\sigma(x_2-\bar{x})],x_1-x_2\rangle\ge 0.
\end{equation}
Combining \eqref{FirstImplication} and \eqref{FirstEquivalence} gives us the implication
\begin{center}
$\varphi$-locally strong monotonicity of $\partial\varphi$ around $(\bar{x},\bar{v})$ $\Longrightarrow$ $\vt$-local monotonicity of $\partial\vt$ around $(\bar{x},\bar{v})$.
\end{center}
Finally, combining \eqref{SecondImplication} and \eqref{SecondEquivalence} tells us that
\begin{center}
$\vt$-local monotonicity of $\partial\vt$ around $(\bar{x},\bar{v})$ $\Longrightarrow$ $\varphi$-locally strong monotonicity of $\partial\varphi$ around $(\bar{x},\bar{v})$,
\end{center}
which therefore completes the proof of the lemma.
\end{proof}
		
The next lemma calculated the Moreau envelope of the quadratically shifted function \eqref{shifted}.
		
\begin{Lemma}[\bf Moreau envelope of quadratic shifts]  Let $\varphi:\R^n\to\overline{\R}$ be an l.s.c. and prox-bounded function, and let $\ox\in\dom\varphi$. Given $\sigma\ne 0$, consider the quadratic shift \eqref{shifted}.
Then for each $\gamma\in (0,|\sigma|^{-1})$ we have 
\begin{equation}\label{envelopepsi}
e_\gamma\vt(x)=e_{{\gamma}/(1-\sigma\gamma)}\varphi\left(\frac{x-\sigma\gamma \ox}{1-\sigma\gamma}\right)-\frac{\sigma}{2(1-\sigma\gamma)}\|x-\ox\|^2,\quad x\in\R^n.
\end{equation} 
Consequently, the following holds for any $\lambda\in(0,|\sigma|^{-1})$:
\begin{equation}\label{envelopevarphi}
e_\lambda\varphi(x)= e_{\lambda/(1+\sigma\lambda)}\vt\left(\frac{x+\sigma\lambda\ox}{1+\sigma\lambda}\right)+\frac{\sigma}{2(1+\sigma\lambda)}\|x-\ox\|^2,\quad x\in\R^n. 
\end{equation}
\end{Lemma}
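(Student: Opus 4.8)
The plan is to establish \eqref{envelopepsi} by a direct computation from the definition \eqref{Moreau} of the Moreau envelope, and then to deduce \eqref{envelopevarphi} from it by exchanging the roles of $\varphi$ and $\vt$.

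First I would write out $e_\gamma\vt(x)=\inf_y\big\{\varphi(y)-\tfrac{\sigma}{2}\|y-\ox\|^2+\tfrac{1}{2\gamma}\|y-x\|^2\big\}$ and combine the two quadratic terms in $y$. Since $\gamma\in(0,|\sigma|^{-1})$ forces $1-\sigma\gamma>0$, the coefficient $\tfrac1\gamma-\sigma=\tfrac{1-\sigma\gamma}{\gamma}$ of $\|y\|^2$ is strictly positive, so I can complete the square in $y$: the $y$-dependent part becomes $\tfrac{1-\sigma\gamma}{2\gamma}\|y-z\|^2$ with $z:=\tfrac{x-\sigma\gamma\ox}{1-\sigma\gamma}$, plus a term depending only on $x$ and $\ox$. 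Rewriting the factor as $\tfrac{1}{2[\gamma/(1-\sigma\gamma)]}$ identifies $\inf_y\big\{\varphi(y)+\tfrac{1}{2[\gamma/(1-\sigma\gamma)]}\|y-z\|^2\big\}=e_{\gamma/(1-\sigma\gamma)}\varphi(z)$; here $\gamma/(1-\sigma\gamma)>0$ is an admissible envelope parameter that tends to $0$ as $\gamma\downarrow 0$, so its finiteness is inherited from the prox-boundedness of $\varphi$. It then remains to check, by matching the coefficients of $\|x\|^2$, $\langle x,\ox\rangle$, and $\|\ox\|^2$, that the leftover $x$-dependent term equals exactly $-\tfrac{\sigma}{2(1-\sigma\gamma)}\|x-\ox\|^2$; this is the routine but slightly tedious part of the argument.

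For \eqref{envelopevarphi} I would observe that $\varphi$ is itself a quadratic shift of $\vt$, namely $\varphi(x)=\vt(x)-\tfrac{-\sigma}{2}\|x-\ox\|^2$, that $\vt$ is l.s.c.\ (being the sum of an l.s.c.\ function and a continuous one) and prox-bounded (which follows, e.g., from \eqref{envelopepsi}, whose right-hand side is finite for small $\gamma$), and that $\ox\in\dom\vt$. Applying \eqref{envelopepsi} with $\varphi$ replaced by $\vt$, with $\sigma$ replaced by $-\sigma$, and with $\gamma$ replaced by $\lambda\in(0,|\sigma|^{-1})$ then yields \eqref{envelopevarphi} after simplifying $1-(-\sigma)\lambda=1+\sigma\lambda$ and $-(-\sigma)\lambda\ox=\sigma\lambda\ox$. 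I do not anticipate a genuine obstacle: the only points requiring care are the positivity bookkeeping ($1-\sigma\gamma>0$ on the prescribed parameter range, so that the infimum over $y$ is a bona fide Moreau envelope rather than $-\infty$) and the elementary arithmetic verification that the residual quadratic collapses to $-\tfrac{\sigma}{2(1-\sigma\gamma)}\|x-\ox\|^2$.
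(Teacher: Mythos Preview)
Your proposal is correct and follows essentially the same route as the paper: expand $e_\gamma\vt(x)$ from the definition, combine the two quadratics in $y$ using $1-\sigma\gamma>0$, complete the square to recognize $e_{\gamma/(1-\sigma\gamma)}\varphi$ at the shifted point $\tfrac{x-\sigma\gamma\ox}{1-\sigma\gamma}$, verify the leftover equals $-\tfrac{\sigma}{2(1-\sigma\gamma)}\|x-\ox\|^2$, and then obtain \eqref{envelopevarphi} by swapping $\varphi\leftrightarrow\vt$ and $\sigma\leftrightarrow-\sigma$. Your extra remark that \eqref{envelopepsi} itself certifies the prox-boundedness of $\vt$ is a nice touch the paper leaves implicit.
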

\begin{proof} Pick any $\gamma\in(0,|\sigma|^{-1})$ and get by definition \eqref{Moreau} of the Moreau envelope that
\begin{align*}
e_\gamma\vt(x)&=\inf_{y\in\R^n}\left\{\vt(y)+\frac{1}{2\gamma}\|y-x\|^2 \right\}=\inf_{y\in\R^n}\left\{\varphi(y)-\frac{\sigma}{2}\|y-\ox\|^2+\frac{1}{2\gamma}\|y-x\|^2\right\}\\
&= \inf_{y\in\R^n}\left\{\varphi(y)-\frac{\sigma}{2}\|y-\ox\|^2+\frac{1}{2\gamma}\|y-\ox\|^2 +\frac{1}{\gamma}\langle y-\ox,\ox -x \rangle  +\frac{1}{2\gamma}\|x-\ox\|^2 \right\}\\
&=\inf_{y\in\R^n}\left\{\varphi(y)+\frac{1-\sigma\gamma}{2\gamma}\|y-\ox\|^2 +\frac{1}{\gamma}\langle y-\ox,\ox -x \rangle  +\frac{1}{2\gamma(1-\sigma\gamma)}\|x-\ox\|^2 + \left(\frac{1}{2\gamma}-\frac{1}{2\gamma(1-\sigma\gamma)}  \right)\|\ox-x\|^2 \right\}\\
&= \inf_{y\in\R^n}\Bigg\{\varphi(y)+\frac{1}{2\left(\frac{\gamma}{1-\sigma\gamma} \right)}\Big\|y-\frac{x-\sigma\gamma\ox}{1-\sigma\gamma} \Big\|^2\Bigg\} -\frac{\sigma}{2(1-\sigma\gamma)}\|x-\ox\|^2\\
&=e_{\gamma/(1-\sigma\gamma)}\varphi\left(\frac{x-\sigma\gamma\ox}{1-\sigma\gamma} \right)-\frac{\sigma}{2(1-\sigma\gamma)}\|x-\ox\|^2,\quad x\in\R^n,
\end{align*}
which verifies \eqref{envelopepsi}. The second representation \eqref{envelopevarphi} follows from \eqref{envelopepsi} since $\varphi$ can be seen as the shifted function of $\vt$ with modulus $-\sigma$, i.e., $\varphi(x)=\vt(x)-(-\sigma/2)\|x-\ox\|^2$.  
\end{proof}
		
Now we are ready to derive the main result of this section that establishes the equivalence between the variational strong convexity of an extended-real-valued function and the local strong convexity of its Moreau envelope with a precise relationship between the corresponding moduli.

\begin{Theorem}[\bf quantitative characterization of variational strong convexity via Moreau envelopes]\label{stronMore}  Let $\varphi:\R^n\to\overline{\R}$ be l.s.c.\ and prox-bounded with $\bar{x}\in\dom\varphi$ and $\bar{v}\in\partial\varphi(\bar{x})$. Then the following are equivalent:

{\bf(i)} $\varphi$ is variationally strongly convex at $\ox$ for $\ov$ with modulus $\sigma>0$.

{\bf(ii)} $\varphi$ is prox-regular at $\ox$ for $\ov$ and $e_\lambda \varphi$ is locally strongly convex around $\ox+\lambda \ov$ with modulus $\frac{\sigma}{1+\sigma\lambda}$ for all numbers $\lm>0$ sufficiently small. 
\end{Theorem}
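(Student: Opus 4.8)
The plan is to deduce this statement from Theorem~\ref{convexviaMo} by passing to the $\sigma$-quadratic shift $\vt(x)=\varphi(x)-\frac{\sigma}{2}\|x-\ox\|^2$ of \eqref{shifted}: Lemma~\ref{cvandstrongcv}(ii) trades the variational strong convexity of $\varphi$ for the variational convexity of $\vt$, Theorem~\ref{convexviaMo} trades the latter for local convexity of the envelopes $e_\gamma\vt$, and the explicit identity \eqref{envelopevarphi} trades local convexity of $e_\gamma\vt$ for local strong convexity of $e_\lambda\varphi$ with the advertised modulus. So the theorem is essentially a reduction, with the genuinely new content being the tracking of the modulus.

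First I would dispose of the routine preliminaries. By the sum rule used in the proof of Lemma~\ref{cvandstrongcv} one has $\partial\vt(\ox)=\partial\varphi(\ox)$, so $\ov\in\partial\vt(\ox)$; the function $\vt$ is prox-bounded because, by \eqref{envelopepsi}, $e_\gamma\vt$ is finite somewhere as soon as $\gamma>0$ is small enough that $\gamma/(1-\sigma\gamma)$ stays below the prox-boundedness threshold of $\varphi$; and since $\vt$ differs from $\varphi$ by the ${\cal C}^2$ function $-\frac{\sigma}{2}\|\cdot-\ox\|^2$, whose gradient vanishes at $\ox$, the function $\vt$ is prox-regular at $\ox$ for $\ov$ exactly when $\varphi$ is. Hence Theorem~\ref{convexviaMo} applies to $\vt$, and combining it with Lemma~\ref{cvandstrongcv}(ii) yields the chain: $\varphi$ is variationally strongly convex at $\ox$ for $\ov$ with modulus $\sigma$ if and only if $\vt$ is variationally convex at $\ox$ for $\ov$, if and only if $\vt$ is prox-regular at $\ox$ for $\ov$ and $e_\gamma\vt$ is locally convex around $\ox+\gamma\ov$ for all sufficiently small $\gamma>0$.

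It then remains to match the envelope conditions on $\vt$ and on $\varphi$. Setting $\gamma:=\lambda/(1+\sigma\lambda)$ gives a continuous increasing bijection between small positive $\lambda$ and small positive $\gamma$, with inverse $\lambda=\gamma/(1-\sigma\gamma)$, both parameters remaining in $(0,|\sigma|^{-1})$ where \eqref{envelopepsi}--\eqref{envelopevarphi} are valid. With the affine bijection $L(x):=(x+\sigma\lambda\ox)/(1+\sigma\lambda)$, which carries $\ox+\lambda\ov$ to $\ox+\gamma\ov$, identity \eqref{envelopevarphi} reads $e_\lambda\varphi(x)-\frac{\sigma}{2(1+\sigma\lambda)}\|x-\ox\|^2=e_\gamma\vt(L(x))$. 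Since precomposition with an affine homeomorphism preserves convexity on convex neighborhoods, the left-hand side is locally convex around $\ox+\lambda\ov$ exactly when $e_\gamma\vt$ is locally convex around $\ox+\gamma\ov$; and by the quadratic-shift description of strong convexity recalled in Section~\ref{sec:prelim} (adding an affine term does not affect convexity), the left-hand side is locally convex around $\ox+\lambda\ov$ exactly when $e_\lambda\varphi$ is locally strongly convex around $\ox+\lambda\ov$ with modulus $\frac{\sigma}{1+\sigma\lambda}$. Transporting the quantifier ``for all sufficiently small parameter'' through the bijection $\lambda\leftrightarrow\gamma$ converts the above chain into the desired equivalence of (i) and (ii), the prox-regularity clauses for $\varphi$ and $\vt$ being interchangeable as already noted.

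The point requiring genuine care is the modulus bookkeeping: the argument succeeds only because the additive quadratic term in \eqref{envelopevarphi} has coefficient exactly $\frac{\sigma}{2(1+\sigma\lambda)}$, so that bare convexity of $e_\gamma\vt$ — not a separately estimated strong convexity — already produces strong convexity of $e_\lambda\varphi$ with the sharp modulus $\frac{\sigma}{1+\sigma\lambda}$. The two subsidiary things to check carefully are that the ``for all small parameter'' quantifiers are transported faithfully under $\gamma=\lambda/(1+\sigma\lambda)$ while both parameters stay inside $(0,|\sigma|^{-1})$, and that prox-boundedness of $\vt$ together with $\ov\in\partial\vt(\ox)$, the hypotheses needed to invoke Theorem~\ref{convexviaMo} for $\vt$, are indeed inherited from the assumptions on $\varphi$.
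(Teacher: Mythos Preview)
Your proposal is correct and follows essentially the same route as the paper: reduce to Theorem~\ref{convexviaMo} via the $\sigma$-quadratic shift $\vt$ using Lemma~\ref{cvandstrongcv}(ii), and then translate between local convexity of $e_\gamma\vt$ and local strong convexity of $e_\lambda\varphi$ through the envelope identity \eqref{envelopevarphi} and the parameter change $\gamma=\lambda/(1+\sigma\lambda)$. The only cosmetic difference is that the paper handles the two implications separately (using \eqref{envelopevarphi} for (i)$\Rightarrow$(ii) and \eqref{envelopepsi} for (ii)$\Rightarrow$(i)), whereas you run a single equivalence chain through \eqref{envelopevarphi}; the modulus bookkeeping and the affine precomposition argument are identical.
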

\begin{proof}  
We start with verifying implication (i)$\Longrightarrow$(ii). Note that the claimed prox-regularity of $\ph$ under (i) is checked in Remark~\ref{disvar}(iii). Considering the shifted function $\vt$ from\eqref{shifted}, we get by Lemma~\ref{cvandstrongcv} that $\vt$ is variationally convex at $\ox$ for $\ov$.
Utilizing Theorem~\ref{convexviaMo} tells us that $e_\lambda\vt$ is locally convex around $\ox+\lambda\ov$ for all small $\lambda>0$. Fix such a number $\lambda$ and define $\gamma:=\lambda/(1+\sigma\lambda)$. Since $\gamma<\lambda$, we find $\eta>0$ ensuring that $e_\gamma\vt$ is convex on $\mathbb{B}_\eta(\ox+\gamma\ov)$. Letting $\epsilon:=(1+\sigma\lambda)\eta>0$ gives us the implication
$$
x\in\mathbb{B}_{\epsilon}(\ox+\lambda \ov)\Longrightarrow\frac{x+\lambda\sigma \ox}{1+\sigma\lambda}\in\mathbb{B}_\eta(\ox+\gamma\ov).
$$
Define further the function $\th:\mathbb{B}_{\epsilon}(\ox+\lambda\ov)\to\mathbb{B}_\eta(\ox+\gamma\ov)$ by
\begin{equation}\label{th}
\th(x):=\frac{x+\lambda\sigma\ox}{1+\sigma\lambda}
\end{equation}
and deduce from \eqref{envelopevarphi} that the following representation of the Moreau envelope:
$$
e_\lambda\varphi=(e_\gamma\vt)\circ\th+\frac{\sigma}{2(1+\sigma\lambda)}\|\cdot -\ox\|^2.
$$ 
Since $(e_\gamma\vt)\circ\th$ is a composition of a convex function and an affine one, it is convex on $\mathbb{B}_{\epsilon}(\ox+\lambda\ov)$. This tells us that $e_\lambda\varphi$ is strongly convex on $\mathbb{B}_{\epsilon}(\ox+\lambda \ov)$ with modulus $\sigma/(1+\sigma\lambda)$, which justifies the claimed implication.\vspace*{0.03in} 

Next we verify implication (ii)$\Longrightarrow$(i). Suppose that the Moreau envelope $e_\lambda \varphi$ is locally strongly convex around $\ox+\lambda\ov$ with modulus $\mu(\lambda):=\sigma/(1+\sigma\lambda)$ for all $\lambda\in(0,\sigma^{-1})$ sufficiently small. Fixing such a number $\lambda$, for each $\gamma\in(0,\lambda/(1+\sigma\lambda))$ we have $0<\gamma/(1-\sigma\gamma)<\lambda$. This allows us to choose $\eta>0$ so that $e_{\gamma/(1-\sigma\gamma)}\varphi$ is strongly convex on $\mathbb{B}_\eta \left(\ox+\frac{\gamma}{1-\sigma\gamma}\ov\right)$ with modulus $\mu(\gamma/(1-\sigma\gamma))=\sigma (1-\sigma\gamma)$. Letting $\epsilon:=(1-\sigma\gamma)\eta>0$ yields the implication
$$
x\in\mathbb{B}_{\epsilon}(\ox+\gamma \ov)\Longrightarrow\frac{x-\sigma\gamma\ox}{1-\sigma\gamma}\in\mathbb{B}_\eta \left(\ox+\frac{\gamma}{1-\sigma\gamma}\ov\right).
$$
Symmetrically to \eqref{th}, define the function $\tilde\th:\mathbb{B}_{\epsilon}(\ox+\gamma\ov)\to\mathbb{B}_\eta\left(\ox+\frac{\gamma}{1-\sigma\gamma}\ov\right)$ by
$$
\tilde\th(x):=\frac{x-\gg\sigma\ox}{1-\sigma\gamma}
$$
and then deduce from \eqref{envelopepsi} the representation
$$
e_\gamma\vt=(e_{\gamma/(1-\sigma\gamma)}\varphi)\circ\tilde\th-\frac{\sigma}{2(1-\sigma\gamma)}\|\cdot-\ox\|^2,
$$
where $\vartheta$ is taken from\eqref{shifted}. As follows from the discussion after the definition of strong convexity in \eqref{ineqstrongfunction}, the composite function $(e_{\gamma/(1-\sigma\gamma)}\varphi)\circ\tilde\th$ is strongly convex on $\mathbb{B}_{\epsilon}(\ox+\gamma\ov)$ with modulus 
$$
\sigma(1-\sigma\gamma) \left(\frac{1}{1-\sigma\gamma}\right)^2=  \frac{\sigma}{1-\sigma\gamma}. 
$$
This tells us that $e_\gamma\vt$ is convex on $\mathbb{B}_{\epsilon}(\ox+\gamma\ov)$, which implies by Theorem~\ref{convexviaMo} that $\vartheta$ is variationally convex at $\ox$ for $\ov$. Utilizing finally Lemma~\ref{cvandstrongcv}, we verify (i) and thus complete the proof.  
\end{proof}

When the modulus of variational strong convexity is not involved, we have yet another equivalence.

\begin{Theorem}[\bf equivalence between variational strong convexity and local strong convexity of Moreau envelopes]\label{stwithoutmodulus} Let $\varphi:\R^n\to \overline{\R}$  be an l.s.c.\ and prox-bounded function with $\bar{x}\in\dom\varphi$, and let $\bar{v}\in\partial\varphi(\bar{x})$.  Then the following assertions are equivalent:

{\bf(i)} $\varphi$ is variationally strongly convex at $\ox$ for $\ov$.

{\bf (ii)} $\varphi$ is prox-regular at $\ox$ for $\ov$ and $e_\lambda \varphi$ is locally strongly convex around $\ox+\lambda \ov$ for all small $\lm>0$. 
\end{Theorem}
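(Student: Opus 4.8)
The plan is to reduce everything to Theorems~\ref{convexviaMo} and \ref{stronMore} already proved above; the only genuinely new point is that the \emph{modulus} need not be tracked. The implication (i)$\Rightarrow$(ii) is then immediate: saying that $\varphi$ is variationally strongly convex at $\ox$ for $\ov$ means, by Definition~\ref{vr}, that this holds with some modulus $\sigma>0$, so Theorem~\ref{stronMore} gives at once that $\varphi$ is prox-regular at $\ox$ for $\ov$ and that $e_\lambda\varphi$ is locally strongly convex around $\ox+\lambda\ov$ with modulus $\sigma/(1+\sigma\lambda)>0$ for all sufficiently small $\lambda>0$; forgetting the value of the modulus yields (ii).

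For the reverse implication (ii)$\Rightarrow$(i), I would essentially rerun the proof of the implication (ii)$\Rightarrow$(i) of Theorem~\ref{convexviaMo}, replacing ``convex'' by ``strongly convex'' throughout. Prox-regularity of $\varphi$ at $\ox$ for $\ov$ gives $\ov\in\widehat\partial\varphi(\ox)$ and makes Proposition~\ref{C11} applicable, producing $\lambda_0>0$, a $\varphi$-attentive localization $T$, and for each $\lambda\in(0,\lambda_0)$ a neighborhood $U_\lambda$ of $\ox+\lambda\ov$ on which $e_\lambda\varphi$ is $\mathcal{C}^{1,1}$, $\operatorname{Prox}_{\lambda\varphi}$ is single-valued and Lipschitz with $\operatorname{Prox}_{\lambda\varphi}(\ox+\lambda\ov)=\ox$, and \eqref{GradEnvelope} holds. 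Fix one such $\lambda$ for which, by (ii), $e_\lambda\varphi$ is moreover strongly convex with some modulus $\mu>0$ on $U_\lambda$ (shrunk if necessary). The first-order characterization of $\mathcal{C}^1$ strongly convex functions gives $e_\lambda\varphi(x)\ge e_\lambda\varphi(u)+\langle\nabla e_\lambda\varphi(u),x-u\rangle+\tfrac{\mu}{2}\|x-u\|^2$ for $x,u\in U_\lambda$. Choosing $U$ of $\ox$ and $V$ of $\ov$ exactly as in that proof so that $x+\lambda v\in U_\lambda$ whenever $x\in U$, $v\in V$, and picking $x\in U$ and $(u,v)\in\gph\partial\varphi\cap(U_\gamma\times V)$, the identities $\nabla e_\lambda\varphi(u+\lambda v)=v$, $\operatorname{Prox}_{\lambda\varphi}(u+\lambda v)=u$, and $e_\lambda\varphi(u+\lambda v)=\varphi(u)+\tfrac{\lambda}{2}\|v\|^2$ carry over verbatim. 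Inserting these into the strong-convexity inequality (with $x+\lambda v$ and $u+\lambda v$ in place of $x$ and $u$) and using $e_\lambda\varphi(x+\lambda v)\le\varphi(x)+\tfrac{\lambda}{2}\|v\|^2$, then cancelling $\tfrac{\lambda}{2}\|v\|^2$ from both sides, yields
\[
\varphi(x)\ge\varphi(u)+\langle v,x-u\rangle+\tfrac{\mu}{2}\|x-u\|^2
\]
for all $x\in U$ and $(u,v)\in\gph\partial\varphi\cap(U_\gamma\times V)$. By Lemma~\ref{strongconvex} this means precisely that $\varphi$ is variationally strongly convex at $\ox$ for $\ov$ (with modulus $\mu$), which is (i).

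The step needing the most care is realizing that a \emph{single} small $\lambda$ suffices in (ii)$\Rightarrow$(i). One is tempted to invert Theorem~\ref{stronMore}, but the local strong-convexity modulus of $e_\lambda\varphi$ supplied by (ii) is a priori $\lambda$-dependent and need not have the special form $\sigma/(1+\sigma\lambda)$; that route would force an unnecessary uniform-in-$\lambda$ lower bound on these moduli. The direct computation above avoids this: any positive strong-convexity modulus $\mu$ of $e_\lambda\varphi$ for one fixed $\lambda$ already propagates to a positive modulus of variational strong convexity of $\varphi$, and Definition~\ref{vr} only requires the existence of such a modulus. Everything else — the precise choice of $U$, $V$, $\gamma$ and the verification of the three proximal identities — is identical to the argument in the proof of Theorem~\ref{convexviaMo} and need not be rewritten.
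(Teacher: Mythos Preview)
Your proposal is correct and follows essentially the same approach as the paper: (i)$\Rightarrow$(ii) is reduced to Theorem~\ref{stronMore}, and (ii)$\Rightarrow$(i) is obtained by fixing a single small $\lambda$, applying Proposition~\ref{C11}, using the first-order strong-convexity inequality for $e_\lambda\varphi$ with some modulus $\sigma_\lambda$ (your $\mu$), and deriving the estimate of Lemma~\ref{strongconvex} exactly as in the proof of Theorem~\ref{convexviaMo}. Your remark that one $\lambda$ suffices and that Theorem~\ref{stronMore} cannot be invoked directly in the reverse direction is a useful clarification, but the computation itself matches the paper's line for line.
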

\begin{proof} Implication (i)$\Longrightarrow$(ii) is an immediate consequence of Theorem~\ref{stronMore}. Suppose that (ii) holds and find $\lambda_0>0$ such that $e_\lambda\varphi$ is locally strongly convex around $\ox+\lambda\ov$ with {\em some} modulus $\sigma_\lambda>0$ for any $\lambda\in(0,\lambda_0)$. It follows from Proposition~\ref{C11} that there exists a $\varphi$-attentive $\gamma$-localization $T$ of $\varphi$ given by \eqref{att} such that for all $\lambda>0$ sufficiently small there is a convex neighborhood $U_\lambda$ of $\ox+\lambda\ov$ on which $e_\lambda\varphi$ is $\mathcal{C}^{1,1}$ on $U_\lambda$ satisfying ${\rm Prox}_{\lm\ph}(\ox+\lm\ov)=\ox$ with \eqref{GradEnvelope}. Fix $\lambda\in(0,\lambda_0)$ and suppose without loss of generality that $e_\lambda\varphi$ is strongly convex on $U_\lambda$ with modulus $\sigma_\lambda$. Utilizing the first-order description of ${\cal C}^1$-smooth strongly convex functions from \cite[Theorem~4.1.1]{hl04} gives us
\begin{equation}\label{strongconvexMoreau}
e_\lambda\varphi(x)\geq e_\lambda\varphi(u)+\langle \nabla e_\lambda\varphi(u),x-u\rangle+\frac{\sigma_\lambda}{2}\|x-u\|^2\;\mbox{ for all }\;x,u\in U_\lambda. 
\end{equation} 
Let $U$ and $V$ be neighborhoods of $\bar{x}$ and $\bar{v}$, respectively, such that $U\subset{\rm int}\,\mathbb{B}_\gamma(\ox)$, $V\subset{\rm int}\,\mathbb{B}_\gamma (\ov)$, and 
\begin{equation}\label{neighborst}
x+\lambda v\in U_\lambda\;\mbox{ whenever }\;x\in U,\;v\in V. 
\end{equation}
By Lemma~\ref{strongconvex}, it is sufficient to prove that 
\begin{equation}\label{strongconvexineq}
\varphi(x)\ge\varphi(u)+\langle v,x-u\rangle+\frac{\sigma_\lambda}{2}\|x-u\|^2\;\mbox{ for all }\;x\in U,\;(u,v)\in \gph\partial\varphi\cap(U_\gamma\times V),
\end{equation}
where $U_\gamma$ is defined in Lemma~\ref{strongconvex}. Picking $x\in U$ and $(u,v)\in\gph\partial\varphi\cap(U_\gamma\times V)$, we get by \eqref{att} that $(u,v)\in\gph T$, which yields $u+\lambda v\in(\lambda I+T^{-1})(v)$. It follows from \eqref{GradEnvelope} and \eqref{neighborst} that  
$$
\nabla e_\lambda\varphi(u+\lambda v)=(\lambda I+T^{-1})^{-1}(u+\lambda v)=v,
$$
$$
\text{\rm Prox}_{\lambda\varphi}(u+\lambda v)=u+\lambda v-\lambda\nabla e_\lambda\varphi(u+\lambda v)=u
$$
leading us to the envelope representation
\begin{align*}
e_\lambda\varphi(u+\lambda v)& =\varphi\big(\text{\rm Prox}_{\lambda\varphi}(u+\lambda v)\big)+\frac{1}{2\lambda}\big\|u+\lambda v-\text{\rm Prox}_{\lambda\varphi}(u+\lambda v)\big\|^2\\
&=\varphi(u)+\frac{1}{2\lambda}\|u+\lambda v-u\|^2=\varphi(u)+\frac{\lambda}{2}\|v\|^2.
\end{align*}
Apply further the estimate in \eqref{strongconvexMoreau} to get the relationships
\begin{eqnarray*}
\varphi(x)+ \frac{\lambda}{2}\|v\|^2&=&\varphi(x) +\frac{1}{2\lambda}\|x+\lambda v -x\|^2\ge e_\lambda\varphi(x+\lambda v)\\
&\ge& e_\lambda\varphi(u+\lambda v)+\langle\nabla e_\lambda\varphi(u+\lambda v), x+\lambda v-(u+\lambda v)\rangle+\frac{\sigma_\lambda}{2}\|x-u\|^2\\
&=&\varphi(u)+\frac{\lambda}{2}\|v\|^2+\langle v,x-u\rangle+\frac{\sigma_\lambda}{2}\|x-u\|^2.
\end{eqnarray*}
Subtracting the term $\frac{\lambda}{2}\|v\|^2$ from both sides above, we arrive at 
\eqref{strongconvexineq} and thus complete the proof. 
\end{proof}

Note that the characterization of variational convexity and variational strong convexity for extended-real-valued functions, which are obtained in Sections~\ref{sec:Moreau} and \ref{sec:Moreau1} via Moreau envelopes, significantly employ the {\em prox-regularity} of the function in question while not its {\em continuous} prox-regularity, i.e., the {\em subdifferential continuity} of the function is not assumed. In contrast, the latter property is essential for establishing coderivative characterizations of both variational convexity properties that are given in the next section.\vspace*{-0.15in}

\section{Coderivative-Based Characterizations of Variational Convexity}\label{sec:codcharacterizationconvex}\vspace*{-0.05in}
\setcounter{equation}{0}

This section is devoted to deriving complete characterizations of both variational convexity and variational strong convexity properties of {\em continuously prox-regular} functions via their {\em combined} and {\em limiting second-order subdifferentials} taken from Definition~\ref{2nd}.\vspace*{0.05in} 

We begin with the lemma, which provides a second-order {\em sufficient condition} for convexity of smooth functions.

\begin{Lemma}[\bf second-order condition for convexity of $\mathcal{C}^1$-smooth functions]\label{sufconvex} Let $\varphi:\Omega\to\R$ be  a $\mathcal{C}^1$-smooth function, where $\emp\ne\Omega\subset\R^n$ be an open convex set. Then $\varphi$ is convex on $\Omega$ if we have
$$
\langle z,w\rangle\ge 0\quad\text{for all }\;z\in (\widehat{D}^*\nabla\varphi)(x)(w),\;x \in\Omega,\;w\in \R^n. 
$$
\end{Lemma}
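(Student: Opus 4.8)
The plan is to reduce the statement to the classical first-order characterization of convexity for $\mathcal{C}^1$-smooth functions via monotonicity of the gradient, i.e. to show that the pointwise hypothesis forces $\nabla\varphi$ to be a monotone operator on $\Omega$. Since $\Omega$ is open and convex and $\varphi$ is $\mathcal{C}^1$, convexity of $\varphi$ on $\Omega$ is equivalent to
$$
\langle\nabla\varphi(x_1)-\nabla\varphi(x_2),x_1-x_2\rangle\ge 0\quad\text{for all }\;x_1,x_2\in\Omega,
$$
by \cite[Theorem~4.1.4]{hl04} (the same tool already invoked in the proof of Theorem~\ref{convexviaMo}). So the whole task is to derive this monotonicity inequality from the coderivative condition.

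First I would fix $x_1,x_2\in\Omega$ and introduce the segment $x_t:=(1-t)x_2+tx_1$ for $t\in[0,1]$, which lies in $\Omega$ by convexity, together with the scalar function $h(t):=\langle\nabla\varphi(x_t),x_1-x_2\rangle$. Since $\nabla\varphi$ is continuous but not assumed differentiable, $h$ need not be $\mathcal{C}^1$, so the core analytic step is to relate the increment $h(1)-h(0)=\langle\nabla\varphi(x_1)-\nabla\varphi(x_2),x_1-x_2\rangle$ to the coderivative of $\nabla\varphi$ along the segment. The natural device is a mean-value inequality for the locally Lipschitz-free but merely continuous map $\nabla\varphi$; here one should invoke a mean value theorem for continuous mappings in terms of the regular (or limiting) coderivative — this is where the main obstacle lies, since $\nabla\varphi$ is only continuous, not Lipschitz, and one needs a result guaranteeing that the increment of $\nabla\varphi$ between two points of $\Omega$ is captured by coderivative values at intermediate points. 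A clean route: for any continuous $F\colon\Omega\to\R^n$, the increment $F(x_1)-F(x_2)$ lies in the closed convex hull of $\bigcup_{t}\widehat D^*F(x_t)(\cdot)$ evaluated appropriately (a Clarke/Mordukhovich-type mean value inequality). Pairing such a representative $z$ with $w:=x_1-x_2$ and using the hypothesis $\langle z,w\rangle\ge 0$ at each intermediate point then yields $\langle\nabla\varphi(x_1)-\nabla\varphi(x_2),x_1-x_2\rangle\ge 0$.

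An alternative, possibly cleaner, line avoids mean-value machinery entirely by a Moreau-envelope smoothing argument: replace $\varphi$ by $\varphi_\mu$, its own sup-convolution/inf-convolution regularization on a slightly shrunk convex subset, so that $\varphi_\mu$ becomes $\mathcal{C}^{1,1}$, observe that the coderivative hypothesis passes to $\nabla\varphi_\mu$ (which is now Lipschitz, so its limiting coderivative is a genuine generalized Jacobian and the classical Lipschitzian mean value inequality applies), conclude convexity of $\varphi_\mu$ on the shrunk set, and then let $\mu\downarrow 0$ using locally uniform convergence $\varphi_\mu\to\varphi$ to recover convexity of $\varphi$ on $\Omega$ by exhaustion. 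I would present the proof along whichever of these is shortest given the paper's available toolbox; in either case the conceptual heart is the same — turning the pointwise nonnegativity of $\langle z,w\rangle$ over all $z\in(\widehat D^*\nabla\varphi)(x)(w)$ into global monotonicity of $\nabla\varphi$, with the continuous-but-not-Lipschitz regularity of $\nabla\varphi$ being the one genuine technical point to handle carefully.
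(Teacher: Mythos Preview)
The paper itself supplies no argument here; it simply cites \cite[Theorem~3.1]{ChieuHuy11}. So there is no in-paper proof to compare against, and the question is whether either of your two routes would actually close.

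Your first route rests on a ``Clarke/Mordukhovich-type mean value inequality'' asserting that $\nabla\varphi(x_1)-\nabla\varphi(x_2)$ is captured by the regular coderivatives $\widehat D^*\nabla\varphi(x_t)(\cdot)$ along the segment, for a merely \emph{continuous} map $\nabla\varphi$. That is precisely the gap. The coderivative mean-value results in \cite{Mordukhovich06} are stated for locally Lipschitzian mappings, and Lipschitzness is what forces the coderivative to carry enough values to represent the increment; for a $\mathcal C^1$ function whose gradient is only continuous, $\widehat D^*\nabla\varphi(x)(w)$ may well be empty on large sets, so the sign hypothesis becomes vacuous there and nothing prevents $h(t)=\langle\nabla\varphi(x_t),x_1-x_2\rangle$ from decreasing. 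You label this ``the one genuine technical point to handle carefully,'' but it is not a detail to be tidied up---it is the entire difficulty, and no off-the-shelf mean-value theorem delivers the conclusion. Your second route has a parallel hole: you assert that ``the coderivative hypothesis passes to $\nabla\varphi_\mu$'' without giving any mechanism. Neither inf-convolution nor mollification comes with a calculus rule that transports the sign condition on $\widehat D^*\nabla\varphi$ to $\widehat D^*\nabla\varphi_\mu$; for inf-convolution you would additionally need prox-regularity or prox-boundedness of $\varphi$, which is not assumed, and for mollification $\nabla^2\varphi_\mu$ is an integral against $\nabla\varphi$ that has no evident reason to be positive semidefinite under your hypothesis. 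The argument in \cite{ChieuHuy11} avoids both devices: it proceeds by contradiction on the segment where monotonicity of $\nabla\varphi$ fails and produces, at a suitably chosen extremal point of $t\mapsto\langle\nabla\varphi(x_t),w\rangle$, an explicit element of $\widehat D^*\nabla\varphi$ violating the sign condition---no mean-value inequality and no smoothing.
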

\begin{proof} It follows from the proof of \cite[Theorem~3.1]{ChieuHuy11}.
\end{proof}

Now we are ready to obtain {the main coderivative-based} second-order characterizations of {\em variational convexity}.

\begin{Theorem}[\bf second-order subdifferential characterizations of variational convexity]\label{2ndconvexsub} Let $\varphi:\R^n\to\overline{\R}$ be subdifferentially continuous at $\bar{x}\in\dom\varphi$ for $\bar{v}\in\partial\varphi(\bar{x})$. Then the following  assertions  are equivalent:

{\bf(i)} $\varphi$ is variationally convex at $\ox$ for $\ov$. 

{\bf(ii)} $\varphi$ is prox-regular at $\ox$ for $\ov$ and there exist neighborhoods $U$ of $\bar{x}$ and $V$ of $\bar{v}$ such that
\begin{equation}\label{PSDcombinesub}
\langle z,w\rangle\ge 0\;\text{ whenever}\;z\in\breve{\partial}^2\varphi(x,y)(w),\; (x,y)\in\gph\partial\varphi\cap(U\times V),\;w\in\R^n.
\end{equation} 
				
{\bf(iii)} $\varphi$ is prox-regular at $\ox$ for $\ov$ and there exist neighborhoods $U$ of $\bar{x}$, and $V$ of $\bar{v}$ such that
\begin{equation}\label{PSDlimit}
\langle z,w\rangle\ge 0\;\text{ whenever }\;z\in \partial^2\ph(x,y)(w),\;(x,y)\in\gph\partial\varphi\cap(U\times V),\;w\in\R^n.
\end{equation} 
\end{Theorem}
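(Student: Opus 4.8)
The plan is to prove the chain of equivalences (i)$\Longleftrightarrow$(ii)$\Longleftrightarrow$(iii) by exploiting the "hidden convexity" of Moreau envelopes from Theorem~\ref{convexviaMo} together with a coderivative chain rule that relates the second-order subdifferentials of $\varphi$ to the coderivative of $\nabla e_\lambda\varphi$. The inclusion \eqref{2ndinclusioncombinelimit} gives (iii)$\Longrightarrow$(ii) for free, so the real work is (i)$\Longrightarrow$(iii) and (ii)$\Longrightarrow$(i); prox-regularity of $\varphi$ under (i) is automatic by Remark~\ref{disvar}(iii).

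\textbf{Setting up the machinery.} Since $\varphi$ is subdifferentially continuous, prox-regularity is the same as continuous prox-regularity, so Proposition~\ref{C11} applies: for small $\lambda>0$ there is a $\varphi$-attentive localization $T$ (which may be replaced by $\partial\varphi$ itself by subdifferential continuity) and a convex neighborhood $U_\lambda$ of $\ox+\lambda\ov$ on which $e_\lambda\varphi\in\mathcal{C}^{1,1}$, $\text{\rm Prox}_{\lambda\varphi}$ is single-valued Lipschitz, and $\nabla e_\lambda\varphi(x)=\frac{1}{\lambda}(x-\text{\rm Prox}_{\lambda\varphi}(x))=(\lambda I+(\partial\varphi)^{-1})^{-1}(x)$. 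The key identity is the relation between $\gph\nabla e_\lambda\varphi$ and $\gph\partial\varphi$: writing $v=\nabla e_\lambda\varphi(u+\lambda v)$ and $\text{\rm Prox}_{\lambda\varphi}(u+\lambda v)=u$ whenever $(u,v)\in\gph\partial\varphi$ near $(\ox,\ov)$, one sees that $x\mapsto x-\lambda\,\nabla e_\lambda\varphi(x)$ maps a neighborhood of $\ox+\lambda\ov$ onto a neighborhood of $\ox$, and $\nabla e_\lambda\varphi = (\lambda I + T^{-1})^{-1}$ means $\nabla e_\lambda\varphi$ is, up to the affine change of variables $x\leftrightarrow x-\lambda\nabla e_\lambda\varphi(x)$, the "resolvent-type" reparametrization of $\partial\varphi$. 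Applying the coderivative sum and inverse rules (or more directly the known second-order chain rule for Moreau envelopes of prox-regular functions — cf. the computations underlying the tilt-stability characterizations referenced after Proposition~\ref{equitiltstr}) yields, for $(u,v)\in\gph\partial\varphi$ near $(\ox,\ov)$ and the corresponding $x:=u+\lambda v$,
\begin{equation*}
\widehat{D}^*(\nabla e_\lambda\varphi)(x)(w)=\big\{z\in\R^n\;\big|\;z\in(I+\lambda P)^{-1}\text{-type image of }\breve{\partial}^2\varphi(u,v)(\cdot)\big\},
\end{equation*}
and the same formula with $\partial^2\varphi$ in place of $\breve\partial^2\varphi$. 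Concretely, the clean statement to extract is: $\langle z,w\rangle\ge 0$ for all $z\in\breve\partial^2\varphi(u,v)(w)$, all $w$, and all $(u,v)\in\gph\partial\varphi$ near $(\ox,\ov)$ \emph{if and only if} $\langle \tilde z,\tilde w\rangle\ge 0$ for all $\tilde z\in\widehat{D}^*(\nabla e_\lambda\varphi)(x)(\tilde w)$, all $\tilde w$, all $x$ near $\ox+\lambda\ov$; and likewise with $\partial^2\varphi$ and $D^*(\nabla e_\lambda\varphi)$. The positivity is preserved because the change of variables contributes a term $\langle z,\lambda z\rangle=\lambda\|z\|^2\ge 0$ (exactly as in the displayed inequality in the proof of Theorem~\ref{convexviaMo}), so no sign is lost in either direction.

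\textbf{Closing the loop.} With this correspondence in hand: for (i)$\Longrightarrow$(iii), assume $\varphi$ variationally convex; then by Theorem~\ref{convexviaMo} $e_\lambda\varphi$ is convex on a neighborhood $U_\lambda$ of $\ox+\lambda\ov$, hence the monotone $\mathcal{C}^1$ gradient $\nabla e_\lambda\varphi$ satisfies $\langle z,w\rangle\ge 0$ for all $z\in D^*(\nabla e_\lambda\varphi)(x)(w)$ (monotonicity of a $\mathcal{C}^{1,1}$ map forces this coderivative positivity — a standard fact, or use that convexity of $e_\lambda\varphi$ gives positive semidefiniteness of its generalized Hessian); transporting back via the correspondence gives \eqref{PSDlimit} on suitable $U,V$, and \eqref{PSDcombinesub} follows from \eqref{2ndinclusioncombinelimit}. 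For (ii)$\Longrightarrow$(i): assuming \eqref{PSDcombinesub}, transport it forward to get $\langle \tilde z,\tilde w\rangle\ge 0$ for all $\tilde z\in\widehat{D}^*(\nabla e_\lambda\varphi)(x)(\tilde w)$ on a neighborhood of $\ox+\lambda\ov$; since $e_\lambda\varphi$ is $\mathcal{C}^1$-smooth there, Lemma~\ref{sufconvex} yields that $e_\lambda\varphi$ is convex on that neighborhood, i.e., locally convex around $\ox+\lambda\ov$; Theorem~\ref{convexviaMo} then delivers variational convexity of $\varphi$ at $\ox$ for $\ov$. The implication (iii)$\Longrightarrow$(ii) is immediate from \eqref{2ndinclusioncombinelimit}.

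\textbf{Main obstacle.} The crux is the precise second-order chain rule connecting $\breve\partial^2\varphi$, $\partial^2\varphi$ at $(u,v)\in\gph\partial\varphi$ with $\widehat D^*(\nabla e_\lambda\varphi)$, $D^*(\nabla e_\lambda\varphi)$ at $x=u+\lambda v$, including verifying that it holds on a full \emph{neighborhood} in $\gph\partial\varphi$ (not just at the reference point) and that the affine reparametrization $x\mapsto x-\lambda\nabla e_\lambda\varphi(x)$ is a bi-Lipschitz homeomorphism near $\ox+\lambda\ov$ so that "$(x,y)\in\gph\partial\varphi\cap(U\times V)$" and "$x\in$ neighborhood of $\ox+\lambda\ov$" correspond to one another. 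This requires careful bookkeeping with the localization $T$ from Proposition~\ref{C11} and the coderivative calculus (inverse rule, which is exact here, plus the sum rule with a smooth summand, which is also exact) — the subdifferential continuity hypothesis is precisely what makes the localization statements clean and lets us pass between $\varphi$-attentive and ordinary neighborhoods. Everything else — the sign-preservation, the two applications of Theorem~\ref{convexviaMo}, the use of Lemma~\ref{sufconvex} — is then routine.
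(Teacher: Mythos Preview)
Your (ii)$\Longrightarrow$(i) argument is essentially the paper's: one shows $z\in\widehat D^*\nabla e_\lambda\varphi(x)(u)$ implies $z\in\breve\partial^2\varphi(x-\lambda\nabla e_\lambda\varphi(x),\nabla e_\lambda\varphi(x))(u-\lambda z)$ via the inverse and sum rules, so \eqref{PSDcombinesub} gives $\langle z,u-\lambda z\rangle\ge 0$, hence $\langle z,u\rangle\ge\lambda\|z\|^2\ge 0$, and Lemma~\ref{sufconvex} plus Theorem~\ref{convexviaMo} finish. Your (iii)$\Longrightarrow$(ii) is also the paper's.

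The gap is in (i)$\Longrightarrow$(iii). Your claim that the sign is preserved ``in either direction'' is false: the asymmetry of the resolvent correspondence bites here. If $z\in\partial^2\varphi(u,v)(w)$, the same calculus gives $z\in D^*(\nabla e_\lambda\varphi)(u+\lambda v)(w+\lambda z)$, and positive semidefiniteness of $D^*(\nabla e_\lambda\varphi)$ then yields only $\langle z,w+\lambda z\rangle\ge 0$, i.e.\ $\langle z,w\rangle\ge -\lambda\|z\|^2$, which is useless. The $\lambda\|z\|^2$ term helps when you go from $\varphi$ to $e_\lambda\varphi$ but hurts when you go back. So the Moreau-envelope route cannot deliver (i)$\Longrightarrow$(iii) as you sketch it.

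The paper avoids this entirely for (i)$\Longrightarrow$(iii): it uses the \emph{definition} of variational convexity directly. Under subdifferential continuity one has $(U\times V)\cap\gph\partial\varphi=(U\times V)\cap\gph\partial\psi$ for some l.s.c.\ convex $\psi$, so $\partial^2\varphi(x,y)(w)=\partial^2\psi(x,y)(w)$ at the common graph points, and then the known second-order necessary condition for convexity of l.s.c.\ functions (\cite[Theorem~3.2]{ChieuChuongYaoYen}) gives $\langle z,w\rangle\ge 0$ immediately. This is both simpler and, crucially, does not pass through the envelope at all.
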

\begin{proof} 
We start with verifying implication (i)$\Longrightarrow$(iii). As mentioned, variational convexity in (i) yields the prox-regularity of $\varphi$ at $\ox$ for $\ov$. Due to the imposed subdifferential continuity of $\varphi$, this ensures the existence of neighborhoods $U$ of $\ox$ and $V$ of $\ov$ as well as of an l.s.c.\ convex function $\psi$ such that $\psi\le\varphi$ on $U$ and
\begin{equation}\label{var=}
(U\times V)\cap\gph\partial\varphi=(U\times V)\cap\gph\partial\psi\quad\text{and }\; \varphi(x)=\psi(x)\;\text{ at the common elements }\;(x,v). 
\end{equation} 
Picking any $(x,y)\in\gph\partial\varphi\cap(U\times V)$, $w\in \R^n$, and $z\in \partial^2\varphi(x,y)(w)$ implies by \eqref{var=} that
$$
\partial^2\varphi(x,y)(w)=\partial^2\psi(x,y)(w),
$$
which tells us in turn that $z\in\partial^2\psi(x,y)(w)$. Applying now to $\psi$ the second-order necessary condition for convexity of l.s.c.\ functions from \cite[Theorem~3.2]{ChieuChuongYaoYen}, we get $\langle z,w\rangle\ge 0$ and therefore justifies assertion (iii). Implication (iii)$\Longrightarrow$(ii) follows from the inclusion in \eqref{2ndinclusioncombinelimit}, valid for all $(x,y)\in\gph\partial\varphi$ and $w\in\R^n$, {which thus verifies \eqref{PSDcombinesub}}.\vspace*{0.03in}   	
			
It remains to verify the reverse implication (ii)$\Longrightarrow$(i). To furnish this, observe first that the imposed prox-regularity in (ii) allows us to suppose without loss of generality that $\ph$ is prox-bounded. Indeed, we can always get this property by adding to $\varphi$ the indicator function of some compact set containing a neighborhood of $\ox$. Therefore, the properties of Moreau envelopes from Proposition~\ref{C11} give us $\lambda_0>0$	such that for any $\lambda\in(0,\lambda_0)$ there exists a convex neighborhood $U_\lambda$ of  $\bar{x}+\lambda\bar{v}$ on which
\begin{equation}\label{propertyprox2nd}
\text{\rm the Moreau envelope $e_\lambda\varphi$ is $\mathcal{C}^{1,1}$ on $U_\lambda$}\;\text{ and }\;\nabla e_\lambda\varphi(x)=\big(\lambda I +(\partial\varphi)^{-1}\big)^{-1}(x)\;\text{ for all }\;x\in U_\lambda. 
\end{equation}
Since (ii) holds, there are neighborhoods $U$ of $\bar{x}$ and $V$ of $\bar{v}$ where \eqref{PSDcombinesub} is satisfied. Fixing any $\lambda\in(0,\lambda_0)$, it follows from the continuity of the mappings $\nabla e_\lambda\varphi$ and $I-\lambda\nabla e_\lambda\varphi$ around $\bar{x}+\lambda\bar{v}$ and from $\nabla e_\lambda\varphi(\ox+\lambda \ov)=\ov$ with $(I-\lambda\nabla e_\lambda \varphi)(\ox+\lambda \ov)=\ox$ that there exists a convex neighborhood $\widetilde{U}\subset U_\lambda$ of $\ox+\lambda\ov$ on which
\begin{equation}\label{Tcont2}
(I-\lambda\nabla e_\lambda\varphi)(x)\times\nabla e_\lambda\varphi(x)\in U\times V,\quad x\in\widetilde{U}. 
\end{equation}
By using Lemma~\ref{sufconvex}, let us now show that $e_\lambda \varphi$ is convex on $\widetilde{U}$. Indeed, take $x\in\widetilde{U},\;u\in\R^n$, and $z~\in~(\widehat{D}^*\nabla e_\lambda\varphi)(x)(u)$. It is easy to check that $-u\in \widehat{D}^*(\nabla e_\lambda\varphi)^{-1}(\nabla e_\lambda\varphi(x),x)(-z)$. Furthermore, it follows from \eqref{propertyprox2nd} and the coderivative sum rule in \cite[Theorem~1.62]{Mordukhovich06} that
\begin{eqnarray*}
\widehat{D}^*(\nabla e_\lambda\varphi)^{-1}\big(\nabla e_\lambda \varphi(x),x\big)(-z)&=&\widehat{D}^*\big(\lambda I+(\partial\varphi)^{-1}\big)\big(\nabla e_\lambda\varphi(x),x\big)(-z)\\
&=&-\lambda z+\widehat{D}^*(\partial\varphi)^{-1}\big(\nabla e_\lambda \varphi(x),x-\lambda\nabla e_\lambda\varphi(x)\big)(-z),
\end{eqnarray*}
which implies that $\lambda z-u\in\widehat{D}^*(\partial\varphi)^{-1}(\nabla e_\lambda\varphi(x),x-\lambda\nabla e_\lambda\varphi(x))(-z)$. In other words,
$$
z\in(\widehat{D}^*\partial\varphi)\big(x-\lambda\nabla e_\lambda\varphi(x),\nabla e_\lambda\varphi(x)\big)(u-\lambda z). 
$$
Using \eqref{PSDcombinesub} and \eqref{Tcont2}, we obtain the inequality
$\langle z,u-\lambda z\rangle\ge 0$, and so $\langle z,u\rangle\ge 0$. Therefore,
\begin{equation*}
\langle z,u\rangle\ge 0\;\mbox{ for all }\;z\in(\widehat{D}^*\nabla e_\lambda\varphi)(x)(u),\;u\in\R^n,\;x\in \widetilde{U}.
\end{equation*}
By Lemma~\ref{sufconvex}, the function $e_\lambda\varphi$ is convex on $\widetilde{U}$. Finally, we apply Theorem~\ref{convexviaMo} on the characterization of variational convexity via Moreau envelopes, which verifies (i) and thus completes the proof of the theorem.
\end{proof} 

As a consequence of Theorem~\ref{2ndconvexsub}, we arrive at the new second-order sufficient conditions for minimizing extended-real-valued continuously prox-regular functions. 

\begin{Corollary}[\bf second-order sufficient optimality conditions for continuously prox-regular functions] Let $\varphi:\R^n\to\overline{\R}$ be continuously prox-regular at $\bar{x}\in\dom\varphi$ for $0\in\partial\varphi(\bar{x})$. Then $\bar{x}$ is a local minimizer of $\varphi$ if either one of the following conditions is satisfied:

{\bf(i)} There exist neighborhoods $U$ of $\bar{x}$ and $V$ of the origin in $\R^n$ such that
$$
\langle z,w\rangle\ge 0\;\text{ whenever }\;z\in\breve{\partial}^2\varphi(x,y)(w),\; (x,y)\in\gph\partial\varphi\cap(U\times V),\;w\in\R^n.
$$
				
{\bf(ii)} There are neighborhoods $U$ of $\bar{x}$ and $V$ of the origin in $\R^n$ such that
$$
\langle z,w\rangle\ge 0\;\text{ whenever }\;z\in\partial^2\varphi(x,y)(w),\; (x,y)\in\gph\partial\varphi\cap(U\times V),\;w\in\R^n.
$$
\end{Corollary}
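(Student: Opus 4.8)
The plan is to obtain this statement as an immediate corollary of the second-order subdifferential characterizations of variational convexity established in Theorem~\ref{2ndconvexsub}, together with the elementary observation recorded in Remark~\ref{sttilt}(i) that a variationally convex function at a point for the zero subgradient attains a local minimum at that point.

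First I would unpack the hypothesis. Continuous prox-regularity of $\varphi$ at $\ox$ for $0\in\partial\varphi(\ox)$ means, by definition, that $\varphi$ is simultaneously prox-regular and subdifferentially continuous at $\ox$ for $\ov:=0$. In particular, the standing assumption of Theorem~\ref{2ndconvexsub}---subdifferential continuity of $\varphi$ at $\ox$ for $\ov$---is in force, and the prox-regularity requirement built into assertions (ii) and (iii) of that theorem is automatically satisfied here.

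Next I would match the two hypotheses of the corollary against the theorem. With $\ov=0$, condition (i) of the corollary is exactly the inequality \eqref{PSDcombinesub} from Theorem~\ref{2ndconvexsub}(ii), while condition (ii) of the corollary is exactly the inequality \eqref{PSDlimit} from Theorem~\ref{2ndconvexsub}(iii); in both cases the only extra clause needed, prox-regularity, is already available. Moreover, by the inclusion \eqref{2ndinclusioncombinelimit} the hypothesis in case (ii) implies the one in case (i), so it in fact suffices to treat case (i). In either situation, Theorem~\ref{2ndconvexsub} yields that $\varphi$ is variationally convex at $\ox$ for $\ov=0$.

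Finally, invoking Remark~\ref{sttilt}(i) with the variationally convex function $\varphi$ at $\ox$ for $0\in\partial\varphi(\ox)$ shows that $\ox$ is a local minimizer of $\varphi$, which is the desired conclusion. There is no genuine obstacle in this proof: essentially all of the work has already been absorbed into Theorem~\ref{2ndconvexsub}, and the only points worth a line of verification are the identification of the corollary's hypotheses with assertions (ii)/(iii) of that theorem and the reduction of case (ii) to case (i) via \eqref{2ndinclusioncombinelimit}.
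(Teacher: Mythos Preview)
Your proof is correct and follows essentially the same approach as the paper, which simply states that the result follows immediately from Theorem~\ref{2ndconvexsub} and Remark~\ref{sttilt}(i). Your added details---unpacking continuous prox-regularity and noting that case~(ii) reduces to case~(i) via \eqref{2ndinclusioncombinelimit}---are fine elaborations but not a different route.
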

\begin{proof} It follows immediately from Theorem~\ref{2ndconvexsub} and Remark~\ref{sttilt}(i).
\end{proof}
		
The next theorem provides second-order subdifferential characterizations of {\em variational strong convexity} with a prescribed modulus for extended-real-valued continuously prox-regular functions. 
		
\begin{Theorem}[\bf second-order characterizations of variational strong convexity]\label{2ndstrongconvex} Let $\varphi:\R^n\to\overline{\R}$ be subdifferentially continuous at $\bar{x}\in\dom\varphi$ for $\bar{v}\in\partial\varphi(\bar{x})$. Then the following assertions are equivalent:

{\bf(i)} $\varphi$ is variationally strongly convex at $\ox$ for $\ov$ with modulus $\sigma>0$. 

{\bf(ii)} $\varphi$ is prox-regular at $\ox$ for $\ov$ and there exist neighborhoods $U$ of $\bar{x}$ and $V$ of $\bar{v}$ such that
\begin{equation}\label{PSDcombine2}
\langle z,w\rangle\ge\sigma\|w\|^2\;\text{ whenever }\;z\in  \breve{\partial}^2\varphi(x,y)(w),\;(x,y)\in\gph\partial\varphi\cap(U\times V),\;w\in\R^n.
\end{equation} 
				
{\bf(iii)} $\varphi$ is prox-regular at $\ox$ for $\ov$ and there are neighborhoods $U$ of $\bar{x}$ and $V$ of $\bar{v}$ such that
\begin{equation}\label{PSDlimiting2}
\langle z,w\rangle\ge\sigma\|w\|^2\;\text{ whenever }\;z\in  \partial^2\varphi(x,y)(w),\;(x,y)\in\gph\partial\varphi\cap(U\times V),\;w\in\R^n.
\end{equation} 
\end{Theorem}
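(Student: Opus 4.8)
The plan is to reduce the statement to the variational--convexity characterization already established in Theorem~\ref{2ndconvexsub} by passing to the $\sigma$-quadratic shift $\vt(x):=\varphi(x)-\frac{\sigma}{2}\|x-\ox\|^2$ from \eqref{shifted}. The starting observation is that $\vt$ and $\varphi$ differ only by the $\mathcal{C}^2$ function $\frac{\sigma}{2}\|\cdot-\ox\|^2$; hence $\vt$ is prox-regular (resp.\ subdifferentially continuous) at $\ox$ for $\ov$ precisely when $\varphi$ is, and $\ov\in\partial\vt(\ox)$ iff $\ov\in\partial\varphi(\ox)$ via the elementary sum rule $\partial\vt(x)=\partial\varphi(x)-\sigma(x-\ox)$ already used in Lemma~\ref{cvandstrongcv}. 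In particular the standing subdifferential-continuity hypothesis of the present theorem allows us to apply Theorem~\ref{2ndconvexsub} to $\vt$.

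Next I would make the second-order objects transform under the shift. From $\partial\vt(x)=\partial\varphi(x)-\sigma(x-\ox)$ we get $(x,v)\in\gph\partial\vt$ iff $(x,v+\sigma(x-\ox))\in\gph\partial\varphi$, and the second-order sum rule for $\mathcal{C}^2$ perturbations (the coderivative sum rule \cite[Theorem~1.62]{Mordukhovich06} for $\partial^2$ and its easier regular-coderivative counterpart for $\breve{\partial}^2$) gives, writing $y:=v+\sigma(x-\ox)$,
$$
\partial^2\vt(x,v)(w)=\partial^2\varphi(x,y)(w)-\sigma w,\qquad \breve{\partial}^2\vt(x,v)(w)=\breve{\partial}^2\varphi(x,y)(w)-\sigma w,\quad w\in\R^n.
$$
Therefore the inequality $\langle z,w\rangle\ge 0$ for all $z\in\partial^2\vt(x,v)(w)$ is literally the inequality $\langle z',w\rangle\ge\sigma\|w\|^2$ for all $z'\in\partial^2\varphi(x,y)(w)$, and similarly with $\breve{\partial}^2$ in place of $\partial^2$.

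Finally I would assemble the equivalence. By Lemma~\ref{cvandstrongcv}(ii), $\varphi$ is variationally strongly convex at $\ox$ for $\ov$ with modulus $\sigma$ iff $\vt$ is variationally convex at $\ox$ for $\ov$, i.e.\ iff assertion~(i) of Theorem~\ref{2ndconvexsub} holds for $\vt$; and by the two paragraphs above, assertions~(ii) and~(iii) of Theorem~\ref{2ndconvexsub} for $\vt$ (on some neighborhoods $\widetilde U\times\widetilde V$ of $(\ox,\ov)$) translate, after replacing $v$ by $y=v+\sigma(x-\ox)$ in the graph correspondence, into exactly \eqref{PSDcombine2} and \eqref{PSDlimiting2} for $\varphi$ (on suitable $U\times V$), together with the prox-regularity clause. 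Since Theorem~\ref{2ndconvexsub} asserts that its three conditions are mutually equivalent, so are the three conditions here. The only point requiring care — and the main obstacle — is the neighborhood bookkeeping forced by the $x$-dependent shift $v\leftrightarrow v+\sigma(x-\ox)$: one must check that, in both directions, shrinking the ball around $\ox$ and adjusting the neighborhood of $\ov$ makes the graph correspondence carry the relevant product neighborhoods into one another, which is routine because $\sigma(x-\ox)\to 0$ as $x\to\ox$. As an alternative that bypasses Lemma~\ref{cvandstrongcv}, one could repeat the proof of Theorem~\ref{2ndconvexsub} verbatim with Theorem~\ref{convexviaMo} replaced by the quantitative Moreau-envelope result Theorem~\ref{stronMore} and Lemma~\ref{sufconvex} replaced by its strong-convexity analogue, while tracking the modulus $\mu(\lambda)=\sigma/(1+\sigma\lambda)$; but this route is more computational, so I would favour the quadratic-shift reduction.
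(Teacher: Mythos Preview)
Your proposal is correct and follows essentially the same route as the paper: reduce to Theorem~\ref{2ndconvexsub} via the $\sigma$-quadratic shift $\vt$, invoke Lemma~\ref{cvandstrongcv}(ii) for the equivalence of (i), and use the coderivative sum rules of \cite[Theorem~1.62]{Mordukhovich06} together with the affine graph correspondence $(x,v)\leftrightarrow(x,v+\sigma(x-\ox))$ to translate conditions (ii) and (iii). The paper carries out exactly this argument, handling the neighborhood bookkeeping you flagged by explicitly introducing the affine maps $L(x,y)=(x,y-\sigma(x-\ox))$ and $L_1(x,y)=(x,y+\sigma(x-\ox))$ and shrinking the product neighborhoods accordingly.
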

\begin{proof} First we justify implication (i)$\Longrightarrow$(iii). Since the prox-regularity is automatic under (i), we proceed with verifying \eqref{PSDlimiting2}. It follows from Lemma~\ref{cvandstrongcv} that the $\sigma$-quadratic shift $\vt:=\varphi-\frac{\sigma}{2}\|\cdot-\ox\|^2$ is variationally convex at $\ox$ for $\ov$. Applying to $\vt$ the second-order characterization of variational convexity from Theorem~\ref{2ndconvexsub}(iii), we find neighborhoods $\widetilde{U}$ of $\bar{x}$ and $\widetilde{V}$ of $\bar{v}$ such that
\begin{equation}\label{PSDstrong}
\langle z, w\rangle\ge 0\;\text{ whenever }\;z \in  \partial^2\vt(x,y)(w),\;(x,y)\in\gph\partial\vt\cap(\widetilde{U}\times\widetilde{V}),\;w\in\R^n.
\end{equation} 
The elementary sum rule for limiting subgradients from \cite[Proposition~1.30]{Mor18} yields
\begin{equation}\label{sumrule1st2}
\partial\vt(x)=\partial\varphi(x)-\sigma(x-\bar{x})\;\text{ for any }\;x\in\dom\varphi.
\end{equation}	
Defining the affine transformation $L:\R^n\times\R^n\to\R^n\times\R^n$ by $L(x,y):=(x,y-\sigma(x-\ox))$ for all $(x,y)\in\R^n\times\R^n$, we find neighborhoods $U$ of $\ox$ and $V$ of $\ov$ such that $L(U\times V)\subset\widetilde{U}\times\widetilde{V}$, which implies that 
\begin{equation}\label{inclusionsubgra}
(x,y)\in U\times V\Longrightarrow(x,y-\sigma(x-\ox) )\in\widetilde{U}\times\widetilde{V}. 
\end{equation} 
To verify now \eqref{PSDlimiting2}, pick any $(x,y)\in\gph\partial\varphi\cap(U\times V)$ and deduce from \eqref{sumrule1st2} and \eqref{inclusionsubgra} that
\begin{equation}\label{inclusiongra2}
\big(x,y-\sigma(x-\ox)\big)\in\gph\partial\vt\cap(\widetilde{U}\times\widetilde{V}).
\end{equation} 
Fix $w\in\R^n$ and $z\in\partial^2\varphi(x,y)(w)=({D}^*\partial\varphi)(x,y)(w)$. It follows from the limiting coderivative sum rule in \cite[Theorem~1.62(ii)]{Mordukhovich06} that we have
$$
(D^*\partial\varphi)(x,y)(w)= D^*\partial\vt\big(x,y-\sigma(x-\bar{x})\big)(w)+\sigma w,
$$
which tells us that $z-\sigma w\in(D^*\partial\vt)(x,y-\sigma(x-\bar{x}))(w)=\partial^2\vt(x,y-\sigma(x-\bar{x}))(w)$. 
Combining the latter inclusion with \eqref{PSDstrong} and \eqref{inclusiongra2}, we arrive at $\langle z-\sigma w,w\rangle\ge 0$, i.e., at $\langle z,w\rangle\ge\sigma\|w\|^2$, which gives us \eqref{PSDlimiting2} and thus verifies the claimed implication  (i) $\Longrightarrow$(iii). The one in (iii)$\Longrightarrow$(ii) follows from the inclusion in \eqref{2ndinclusioncombinelimit}.\vspace*{0.03in}

To complete the proof of the theorem, it remains to justify implication (ii)$\Longrightarrow$(i). Suppose that $\varphi$ is prox-regular at $\ox$ for $\ov$ and then find neighborhoods $U$ of $\bar{x}$ and $V$ of $\bar{v}$ such that \eqref{PSDcombine2} is satisfied. Our immediate goal is to check that the function $\vt:=\varphi-\frac{\sigma}{2}\|\cdot-\ox\|^2$ is variationally convex at $\ox$ for $\ov$. To furnish this, consider another affine transformation $L_1(x,y):=(x,y+\sigma (x-\ox))$ on $\R^n\times\R^n$ for which there are neighborhoods $\widetilde{U}$ of $\ox$ and $\widetilde{V}$ of $\ov$ with $L_1(\widetilde{U}\times\widetilde{V})\subset U\times V$. It follows from \eqref{sumrule1st2} that
\begin{equation}\label{inclusionsubgra2}
(x,y)\in\gph\partial\vt\cap(\widetilde{U}\times\widetilde{V})\Longrightarrow\big(x,y+\sigma(x-\ox)\big)\in\gph\partial\varphi\cap(U\times V).  
\end{equation}
Take $(x,y)\in\gph\partial\vt\cap(\widetilde{U}\times\widetilde{V})$, $w\in\R^n$, and $z\in(\widehat{D}^*\partial\vt)(x,y)(w)$. By using the regular coderivative sum rule from \cite[Theorem~1.62(i)]{Mordukhovich06}, we get
$$
(\widehat{D}^*\partial\vt)(x,y)(w)=\widehat{D}^*\partial\varphi\big(x,y+\sigma(x-\bar{x})\big)(w)-\sigma w,
$$
which ensures that $z+\sigma w\in(\widehat{D}^*\partial\varphi)(x,y+\sigma(x-\bar{x}))(w)$. It follows from \eqref{PSDcombine2} and \eqref{inclusionsubgra2} that $\langle z+\sigma w,w\rangle \ge\sigma\|w\|^2$, i.e., $\langle z,w\rangle\ge 0$. Applying the second-order subdifferential characterization of variational convexity from Theorem~\ref{2ndconvexsub}(ii) tells us that $\vt$ is variationally convex at $\ox$ for $\ov$. To deduce from here the strong variational convexity of $\ph$ at $\ox$ for $\ov$, we just employ Lemma~\ref{cvandstrongcv} and therefore complete the proof of the theorem.
\end{proof}

Note that the obtained characterizations of variational convexity and variational strong convexity expressed in terms of the {\em limiting second-order subdifferential} \eqref{limitsec} are {\em more preferable} in comparison with their counterparts given via the combined second-order subdifferential \eqref{seccombine}. This is due the {\em robustness} and  well-developed {\em calculus rules} for the former construction that is not the case for the latter one. Nevertheless, the characterizations derived in Theorems~\ref{2ndconvexsub} and \ref{2ndstrongconvex} even in terms of \eqref{limitsec} involve {\em neighborhoods} of the reference point. It is definitely desired to establish {\em pointbased} characterizations of these properties expressed via \eqref{limitsec} just at the point in question.\vspace*{0.05in}

We now obtain such a pointbased characterization for the variational {\em strong} convexity property of continuously prox-regular functions. Establishing results of this type for (nonstrong) variational convexity is an {\em open question}.\vspace*{0.05in}

To proceed, recall the following lemma taken from \cite[Proposition~4.6]{ChieuLee17}.

\begin{Lemma}[\bf pointbased second-order sufficient condition for local strong convexity of functions from class $\mathcal{C}^{1,1}$]\label{sufconvex3} Let   $\varphi:\R^n\to\R$ be a function of class $\mathcal{C}^{1,1}$ around $\ox\in\R^n$. Then $\varphi$ is locally strongly convex around $\ox$ if we have the positive-definiteness condition
$$
\langle z,w\rangle>0\;\text{ for all }\;z\in({D}^*\nabla\varphi)(\ox)(w),\;w\in \R^n\setminus\{0\}. 
$$
\end{Lemma}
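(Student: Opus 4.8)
The plan is to convert the pointbased positive‑definiteness hypothesis at $\ox$ into a uniform coderivative inequality on a whole neighborhood, and then to feed this into Lemma~\ref{sufconvex} after a quadratic shift. First I would show that there exist $\sigma>0$ and $\delta>0$ with
\[
\langle z,w\rangle\ge\sigma\|w\|^2\quad\text{for all }\;z\in(D^*\nabla\varphi)(x)(w),\;x\in\mathbb{B}_\delta(\ox),\;w\in\R^n.
\]
Arguing by contradiction, if no such $\sigma,\delta$ exist, there are sequences $x_k\to\ox$, $w_k$ with $\|w_k\|=1$, and $z_k\in(D^*\nabla\varphi)(x_k)(w_k)$ satisfying $\langle z_k,w_k\rangle<1/k$. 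Since $\nabla\varphi$ is Lipschitz continuous around $\ox$, the coderivative mapping $D^*\nabla\varphi$ is locally bounded there, so $\{z_k\}$ is bounded; passing to subsequences we may assume $w_k\to w$ with $\|w\|=1$ and $z_k\to z$. The robustness (closed‑graph property) of the limiting coderivative of the locally Lipschitzian mapping $\nabla\varphi$ then gives $z\in(D^*\nabla\varphi)(\ox)(w)$ with $\langle z,w\rangle=\lim_k\langle z_k,w_k\rangle\le 0$, contradicting the hypothesis because $w\ne 0$.

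Next I would pass to the quadratic shift $\psi:=\varphi-\frac{\sigma}{2}\|\cdot\|^2$, which remains of class $\mathcal{C}^{1,1}$ around $\ox$ and satisfies $\nabla\psi=\nabla\varphi-\sigma I$. Applying the coderivative sum rule from \cite[Theorem~1.62]{Mordukhovich06} with the smooth summand $-\sigma I$ yields $(D^*\nabla\psi)(x)(w)=(D^*\nabla\varphi)(x)(w)-\sigma w$, so every $z'\in(D^*\nabla\psi)(x)(w)$ has the form $z'=z-\sigma w$ with $z\in(D^*\nabla\varphi)(x)(w)$, whence $\langle z',w\rangle=\langle z,w\rangle-\sigma\|w\|^2\ge 0$ for all $x\in\mathbb{B}_\delta(\ox)$ and $w\in\R^n$. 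Since $\widehat{D}^*\nabla\psi(x)(w)\subset(D^*\nabla\psi)(x)(w)$, the same inequality holds for the regular coderivative.

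Finally, I would take $\Omega:=\inte\mathbb{B}_\delta(\ox)$, which is open and convex, and invoke Lemma~\ref{sufconvex} for the $\mathcal{C}^1$‑smooth function $\psi$ on $\Omega$ to conclude that $\psi$ is convex on $\Omega$; consequently $\varphi=\psi+\frac{\sigma}{2}\|\cdot\|^2$ is strongly convex on $\Omega$ with modulus $\sigma$, and thus $\varphi$ is locally strongly convex around $\ox$. The crux of the argument is the first step --- propagating the condition at $\ox$ to a uniform one on $\mathbb{B}_\delta(\ox)$ --- which relies precisely on the local boundedness and robustness of $D^*\nabla\varphi$ that are available because $\nabla\varphi$ is locally Lipschitzian; once this is secured, the sum rule and Lemma~\ref{sufconvex} finish the proof routinely.
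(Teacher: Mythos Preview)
The paper does not supply its own proof of this lemma; it simply quotes the result from \cite[Proposition~4.6]{ChieuLee17}. Your argument is correct and self-contained: the contradiction step legitimately exploits the local boundedness of $D^*\nabla\varphi$ (available because $\nabla\varphi$ is locally Lipschitzian) together with the outer semicontinuity of the limiting coderivative graph to pass from the pointbased hypothesis at $\ox$ to a uniform lower bound $\langle z,w\rangle\ge\sigma\|w\|^2$ on a ball, and the subsequent quadratic shift plus the coderivative sum rule reduce the matter to Lemma~\ref{sufconvex} in the standard way. Thus your proof is a valid direct verification of a statement that the paper merely cites.
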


Here is a pointbased characterization of variational strong convexity for
subdifferentially continuous functions.

\begin{Theorem}[\bf pointbased second-order subdifferential characterization of variational strong convexity]\label{pointPD} Let $\varphi:\R^n\to\overline{\R}$ be a subdifferentially continuous function at $\bar{x}\in\dom\varphi$ for $\bar{v}\in\partial\varphi(\bar{x})$. Then the following assertions are equivalent:

{\bf(i)} $\varphi$ is variationally strongly convex at $\ox$ for $\ov$.

{\bf(ii)} $\varphi$ is prox-regular at $\ox$ for $\ov$ and the second-order subdifferential \eqref{limitsec} is positive-definite in the sense that
\begin{equation}\label{PSDpointbased}
\langle z,w\rangle>0\;\text{ whenever }\;z\in\partial^2\varphi(\ox,\ov)(w),\;w\ne 0.
\end{equation} 
\end{Theorem}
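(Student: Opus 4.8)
\noindent\emph{Proof strategy.} The two implications are treated asymmetrically. The implication (i)$\Longrightarrow$(ii) is essentially a one-line specialization of the neighborhood result in Theorem~\ref{2ndstrongconvex}: variational convexity of $\varphi$ yields its prox-regularity at $\ox$ for $\ov$ by Remark~\ref{disvar}(iii), while variational strong convexity at $\ox$ for $\ov$ holds with \emph{some} modulus $\sigma>0$, so Theorem~\ref{2ndstrongconvex}(iii) provides neighborhoods $U$ of $\ox$ and $V$ of $\ov$ with $\langle z,w\rangle\ge\sigma\|w\|^2$ whenever $z\in\partial^2\varphi(x,y)(w)$ and $(x,y)\in\gph\partial\varphi\cap(U\times V)$; taking $(x,y)=(\ox,\ov)$ and $w\ne 0$ gives exactly \eqref{PSDpointbased}. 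All the work is therefore in the reverse implication (ii)$\Longrightarrow$(i).

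For that direction the plan is to route through the Moreau envelope, just as in the neighborhood theorems, but now localized at the single point. Assuming $\varphi$ is prox-regular at $\ox$ for $\ov$ together with the standing subdifferential continuity (so that $\varphi$ is continuously prox-regular there), and adding the indicator of a compact neighborhood of $\ox$ to make $\varphi$ prox-bounded without changing anything local (as in the proof of Theorem~\ref{2ndconvexsub}), Proposition~\ref{C11} produces $\lambda_0>0$ such that for every $\lambda\in(0,\lambda_0)$ the envelope $e_\lambda\varphi$ is of class $\mathcal{C}^{1,1}$ near $\ox+\lambda\ov$, with $\nabla e_\lambda\varphi(\ox+\lambda\ov)=\ov$ and $\nabla e_\lambda\varphi=(\lambda I+(\partial\varphi)^{-1})^{-1}$ there (subdifferential continuity is what lets the $\varphi$-attentive localization $T$ in \eqref{GradEnvelope} be replaced by $\partial\varphi$ near $(\ox,\ov)$). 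The goal is to prove that each such $e_\lambda\varphi$ is \emph{locally strongly convex} around $\ox+\lambda\ov$, since then Theorem~\ref{stwithoutmodulus} yields the variational strong convexity of $\varphi$ at $\ox$ for $\ov$ and the proof is complete.

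To obtain the local strong convexity of the $\mathcal{C}^{1,1}$ function $e_\lambda\varphi$ around $\ox+\lambda\ov$ I would invoke the pointbased sufficient condition of Lemma~\ref{sufconvex3}, i.e.\ verify that $(D^*\nabla e_\lambda\varphi)(\ox+\lambda\ov)$ is positive-definite. This is the heart of the argument. Starting from $z\in(D^*\nabla e_\lambda\varphi)(\ox+\lambda\ov)(w)$, the graph-inversion rule for limiting coderivatives turns this into a condition on $D^*(\nabla e_\lambda\varphi)^{-1}$ at $(\ov,\ox+\lambda\ov)$; substituting $(\nabla e_\lambda\varphi)^{-1}=\lambda I+(\partial\varphi)^{-1}$, applying the limiting coderivative sum rule with the smooth summand $\lambda I$ from \cite[Theorem~1.62]{Mordukhovich06}, and inverting once more, one should arrive at
$$
z\in(D^*\nabla e_\lambda\varphi)(\ox+\lambda\ov)(w)\ \Longrightarrow\ z\in\partial^2\varphi(\ox,\ov)(w-\lambda z),
$$
which is the pointwise analogue of the regular-coderivative computation already carried out in the proof of Theorem~\ref{2ndconvexsub}(ii)$\Longrightarrow$(i). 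Combining this with \eqref{PSDpointbased}: given $w\ne 0$, if $w-\lambda z\ne 0$ then $\langle z,w-\lambda z\rangle>0$, hence $\langle z,w\rangle>\lambda\|z\|^2\ge 0$; and if $w=\lambda z$ then $z\ne 0$, hence $\langle z,w\rangle=\lambda\|z\|^2>0$. In either case $\langle z,w\rangle>0$, so Lemma~\ref{sufconvex3} applies. The step I expect to be the main obstacle is precisely this coderivative computation — keeping the base points and, above all, the signs straight through the two graph inversions and the smooth sum rule so that the implication emerges exactly with the shift ``$w-\lambda z$'', which is what makes the final positive-definiteness bookkeeping go through; the remaining ingredients (the prox-bounded reduction, Proposition~\ref{C11}, the identification of $T$ with $\partial\varphi$ via subdifferential continuity, and the closing appeals to Lemma~\ref{sufconvex3} and Theorem~\ref{stwithoutmodulus}) are routine.
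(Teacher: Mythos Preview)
Your proposal is correct and follows essentially the same route as the paper: reduce to positive-definiteness of $D^*\nabla e_\lambda\varphi$ at $\ox+\lambda\ov$, invoke Lemma~\ref{sufconvex3}, and close with Theorem~\ref{stwithoutmodulus}. The only difference is that the paper outsources the key implication $z\in(D^*\nabla e_\lambda\varphi)(\ox+\lambda\ov)(w)\Rightarrow z\in\partial^2\varphi(\ox,\ov)(w-\lambda z)$ to \cite[Lemma~6.4]{BorisKhanhPhat} rather than deriving it inline via graph inversion and the smooth sum rule as you propose; the subsequent case split on whether $w-\lambda z=0$ is identical.
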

\begin{proof} Implication (i)$\Longrightarrow$(ii) follows from Theorem~\ref{2ndstrongconvex}. To verify the converse implication (ii)$\Longrightarrow$(i), observe as above that the prox-regularity of $\ph$ imposed in (ii) ensures without loss of generality that $\ph$ is prox-bounded. By the properties of Moreau envelopes listed in Proposition~\ref{C11}, we find $\lambda_0>0$ such that for any $\lambda\in(0,\lambda_0)$ there exists a convex neighborhood $U_\lambda$ of  $\bar{x}+\lambda\bar{v}$ on which
\begin{equation*}
\text{\rm the Moreau envelope $e_\lambda\varphi$ is $\mathcal{C}^{1,1}$ on $U_\lambda$}\quad\text{and }\;\nabla e_\lambda\varphi(x)=\big(\lambda I +(\partial\varphi)^{-1}\big)^{-1}(x)\quad\text{for all }\;x\in U_\lambda. 
\end{equation*}
Fix  any $\lambda\in(0,\lambda_0)$ and show that $e_\lambda \varphi$ is locally strongly convex around $\ox +\lambda\ov$. Indeed, take $z\in({D}^*\nabla e_\lambda\varphi)(\ox+\lambda\ov)(u)$ with $u\ne 0$. It follows from \cite[Lemma~6.4]{BorisKhanhPhat} that $z\in \partial^2\varphi(\ox,\ov)(u-\lambda z)$. If $u-\lambda z \ne 0$, then we get from \eqref{PSDpointbased} that
$\langle z,u-\lambda z\rangle>0$, i.e., $\langle z,u\rangle>\lambda\|z\|^2>0$. Otherwise, this tells us that $u=\lambda z$, and thus 
$$
\langle z,u\rangle=\frac{1}{\lambda}\langle u,u\rangle=\frac{1}{\lambda}\|u\|^2>0.
$$
Therefore, we always have the coderivative positive-definiteness for the ${\cal C}^{1,1}$ Moreau envelope:
\begin{equation*}
\langle z,u\rangle>0\;\mbox{ for all }\;z\in({D}^*\nabla e_\lambda\varphi)(\ox+\lambda\ov)(u),\;u\ne 0.
\end{equation*}
This ensures by Lemma~\ref{sufconvex3} that $e_\lambda\varphi$ is locally strongly convex around $\ox+\lambda\ov$. Employing the characterization of Theorem~\ref{stwithoutmodulus} concludes that $\varphi$ is variationally strongly convex at $\ox$ for $\ov$, and thus we complete the proof. 
\end{proof}

As a consequence of the obtained results, we arrive to the following second-order subdifferential characterizations of tilt-stable minimizers, with a given modulus, for extended-real-valued  continuously prox-regular functions.

\begin{Corollary}[\bf second-order characterizations of tilt stability for continuously prox-regular functions]\label{2ndtilt} Let $\varphi:\R^n\to \overline{\R}$ be continuously prox-regular at $\bar{x}\in\dom\varphi$ for $0\in\partial\varphi(\bar{x})$. Then we have the equivalences: 

{\bf(i)} $\bar{x}$ is a tilt-stable local minimizer with modulus $\kappa>0$. 

{\bf(ii)} There exist neighborhoods $U$ of $\bar{x}$ and $V$ of the origin in $\R^n$ such that
$$
\langle z,w\rangle\ge\frac{1}{\kappa}\|w\|^2\;\text{ whenever }\;z\in \breve{\partial}^2\varphi(x,y)(w),\;(x,y)\in\gph\partial\varphi\cap(U\times V),\;w\in\R^n.
$$ 
				
{\bf(iii)} There exist neighborhoods $U$ of $\bar{x}$ and $V$ of the origin in $\R^n$ such that
$$
\langle z,w\rangle\ge\frac{1}{\kappa}\|w\|^2\;\text{ whenever }\;z\in  \partial^2\varphi(x,y)(w),\;(x,y)\in\gph\partial\varphi\cap(U\times V),\;w\in\R^n.
$$
Furthermore, the fulfillment of {\rm(i)} with some modulus is equivalent to the pointbased positive-definiteness condition 
\begin{equation}\label{tilt-point}
\langle z,w\rangle>0\;\text{ whenever }\;z\in\partial^2\varphi(\ox,0)(w),\;w\ne 0.
\end{equation}
\end{Corollary}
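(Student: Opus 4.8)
The plan is to deduce the corollary directly from the equivalences already established, exploiting the fact that here $\ov=0$, so that the continuous prox-regularity of $\varphi$ at $\ox$ for $0$ automatically subsumes every prox-regularity hypothesis occurring in Theorems~\ref{2ndstrongconvex} and \ref{pointPD}, and the neighborhood $V$ of $\ov$ becomes a neighborhood of the origin. First I would pass from tilt stability to variational strong convexity via Proposition~\ref{equitiltstr}: since $\varphi$ is continuously prox-regular at $\ox$ for $0$, the statement that $\ox$ is a tilt-stable local minimizer of $\varphi$ with modulus $\kk>0$ is equivalent, by the equivalence (i)$\Leftrightarrow$(ii) of that proposition read with $\sigma:=\kk^{-1}$, to the variational strong convexity of $\varphi$ at $\ox$ for $\ov=0$ with modulus $\kk^{-1}$.

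\textbf{Equivalence of (i)--(iii).} With this identification in hand, I would apply Theorem~\ref{2ndstrongconvex} to $\varphi$ at $\ox$ for $\ov=0$ with modulus $\sigma=\kk^{-1}$. Because continuous prox-regularity makes the prox-regularity requirement in parts (ii) and (iii) of that theorem automatic, the variational strong convexity of $\varphi$ at $\ox$ for $0$ with modulus $\kk^{-1}$ is equivalent to the existence of neighborhoods $U$ of $\ox$ and $V$ of $0$ such that $\langle z,w\rangle\ge\kk^{-1}\|w\|^2$ whenever $z\in\breve{\partial}^2\varphi(x,y)(w)$ with $(x,y)\in\gph\partial\varphi\cap(U\times V)$, and also equivalent to the same inequality stated over the limiting second-order subdifferential $\partial^2\varphi$. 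Chaining these equivalences with the reduction of the first paragraph yields (i)$\Leftrightarrow$(ii)$\Leftrightarrow$(iii).

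\textbf{The pointbased assertion.} For the concluding ``furthermore'' statement I would again use Proposition~\ref{equitiltstr}: $\ox$ being a tilt-stable local minimizer of $\varphi$ \emph{with some modulus} is equivalent to $\varphi$ being variationally strongly convex at $\ox$ for $\ov=0$ \emph{with some modulus}, i.e., variationally strongly convex at $\ox$ for $0$ in the unquantified sense of Definition~\ref{vr}. Now apply Theorem~\ref{pointPD} at $\ox$ for $\ov=0$; since the prox-regularity part there is automatic under continuous prox-regularity, this variational strong convexity is equivalent to the pointbased positive-definiteness $\langle z,w\rangle>0$ for all $z\in\partial^2\varphi(\ox,0)(w)$ and $w\ne 0$, which is precisely \eqref{tilt-point}. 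The only matters requiring care, rather than a genuine obstacle, are the bookkeeping of moduli ($\kk$ versus $\kk^{-1}$) across the tilt-stability/variational-strong-convexity dictionary and the observation that the neighborhood $V$ in the cited theorems is centred at $\ov=0$; both are immediate, so no new argument beyond assembling Proposition~\ref{equitiltstr}, Theorem~\ref{2ndstrongconvex}, and Theorem~\ref{pointPD} is needed.
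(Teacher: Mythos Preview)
Your proposal is correct and follows exactly the route taken in the paper: the proof there simply says that the characterizations follow directly from Theorems~\ref{2ndstrongconvex}, \ref{pointPD}, and Proposition~\ref{equitiltstr}, which is precisely the chain of reductions you spell out in detail (tilt stability $\Leftrightarrow$ variational strong convexity via Proposition~\ref{equitiltstr} with $\sigma=\kk^{-1}$, then the neighborhood and pointbased second-order characterizations via the two theorems).
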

\begin{proof}  The given characterizations follow directly from {Theorems~\ref{2ndstrongconvex}, \ref{pointPD} and Proposition~\ref{equitiltstr}}.
\end{proof}

\begin{Remark}[\bf discussions on characterizing tilt-stability via second-order subdifferentials] {\rm The first characterization of tilt-stable minimizers of continuously prox-regular functions on $\R^n$ was obtained in the pointwise second-order subdifferential form \eqref{tilt-point} by Poliquin and Rockafellar \cite[Theorem~1.3]{Poli} without involving moduli. The quantitative neighborhood characterization of tilt stability in (ii) via the combined second-order subdifferential was established by Mordukhovich and Nghia in \cite[Theorem~4.3]{MorduNghia13} in Hilbert spaces. {Here we recover the known results including the equivalence between (i) and (ii) and the pointbased characterization of tilt stability by using the obtained new characterizations of variational convexity.} To the best of our knowledge, the quantitative characterizations in (iii) via the limiting second-order subdifferential has not been observed earlier in the literature. Note that our proof of the second-order subdifferential characterizations of tilt-stable minimizers in Corollary~\ref{2ndtilt} via variational convexity is significantly different from those previously used.}
\end{Remark}

 We conclude this section with the following remark on the novel SCD approach to tilt stability and related issues developed of Gfrerer and Outrata in their recent paper \cite{go22}.

\begin{Remark}[\bf strong variational convexity and tilt stability via SCD mappings]\label{discussSCD} \rm In \cite{go22}, Gfrerer and Outrata introduced new generalized derivatives for set-valued mappings called \textit{SCD mappings} (subspace containing derivative mappings). They derived for such mappings some calculus rules and revealed a number of interesting connections in variational analysis. SCD mappings particularly contain subdifferentials of prox-regular and subdifferentially continuous functions \cite[Proposition 3.26]{go22}. In particular, the SCD connection to Rockafellar’s quadratic bundles \eqref{qb} was established and a comparison with the second-order subdifferentials was provided in \cite[Example~3.29]{go22}. Characterizations of tilt stability and strong metric regularity were also derived in Section~7 of \cite{go22}. The equivalent descriptions of variational strong convexity given in Proposition~\ref{equitiltstr} allow us to obtain the corresponding characterizations of variational strong convexity via SCD mappings.
\end{Remark}\vspace*{-0.22in}

\section{Variational Sufficiency in Composite
Optimization}\label{sec:varsuf}\vspace*{-0.05in}
\setcounter{equation}{0}

 {As discussed in Remark~\ref{sttilt}}, variational convexity and variational strong convexity of an l.s.c.\ function $\varphi\colon\R^n\to\oR$ imply the local optimality and  {tilt-stability of a local minimizer}, respectively. However, the reverse implications fail. The properties of variational convexity and variational strong convexity of minimizing cost functions at stationary points were labeled by Rockafellar \cite{r19} as {\em variational sufficiency} and {\em strong variational sufficiency}, where the motivations came from applications to proximal point algorithms. Further applications of these notions and ideas to extended augmented Lagrangian methods in rather general frameworks of optimization were given in the most recent papers \cite{roc,r22}.\vspace*{0.03in}

The main goal of this section is to investigate and characterize both of these properties in structured frameworks of {\em composite constrained optimization} by using the {\em generalized differential characterizations} of variational convexity and variational strong convexity established in Sections~\ref{sec:Moreau} and \ref{sec:Moreau1} being married to {\em second-order calculus rules} developed for the basic second-order subdifferential \eqref{limitsec}.\vspace*{0.05in}

We start with the definitions of variational sufficiency and strong variational sufficiency in the general unconstrained format of minimizing extended-real-valued functions.

\begin{Definition}[\bf variational sufficiency and strong variational sufficiency for local optimality]\label{vrforun} {\rm Given an extended-real-valued function $\varphi:\R^n\to\overline{\R}$, consider the unconstrained optimization problem:
\begin{equation}\label{unconpb}
\min\quad\varphi(x)\quad\text{subject to }\;x\in\R^n.
\end{equation}
It is said that the {\em variational sufficient condition for local optimality} in \eqref{unconpb} holds at $\ox$ if $\varphi$ is variationally convex at $\ox$ for $0\in \partial\varphi(\ox)$. If $\varphi$ is variationally strongly convex at $\ox$ for $0$ with modulus $\sigma>0$, then we say that the {\em strong variational sufficient condition for local optimality at $\bar{x}$} holds with modulus $\sigma$.}
\end{Definition}

Here we consider the class of {\em composite optimization problems} given by:
\begin{equation}\label{composite}
\min\;\varphi(x):=\ph_0(x)+\psi\big(g(x)\big)\quad\text{subject to }\;x\in\R^n,
\end{equation}
where $\psi:\R^m\to\overline{\R}$ is an extended-real-valued l.s.c.\ function, $\ph_0:\R^n\to\R$ is a $\mathcal{C}^2$-smooth function, and $g$ is a $\mathcal{C}^2$-smooth mapping from $\R^n$ to $\R^m$. These are our {\em standing assumptions} in this section unless otherwise stated. As mentioned, the composite format \eqref{composite} implicitly incorporates the constraint $g(x)\in\dom\psi$. In what follows, we aim at deriving second-order characterizations of variational sufficiency and strong variational sufficiency for local optimality in \eqref{composite} expressed in terms of the given data of this problem.\vspace*{0.03in}

To proceed, for each $(x,v)\in\R^n\times\R^n$ define the {\em set of multipliers} \begin{equation}\label{Lagrangemultiplier}
\Lambda (x,v):=\big\{y\in\R^m\;\big|\;v=\nabla\ph_0(x)+\nabla g(x)^*y,\;y\in\partial\psi\big(g(x)\big)\big\}.
\end{equation}

The first theorem imposes the {\em full rank} assumption on the Jacobian matrix $\nabla g(\ox)$ at the point in question.

\begin{Theorem}[\bf variational and strong variational sufficiency for composite problems with full ranks of Jacobians]\label{vrcoderivative}  Let $\ox\in\R^n$ be a stationary point of the composite optimization problem \eqref{composite} at which  $\rank\nabla g(\ox)=m$ and hence there exists a unique vector $\oy\in\R^m$ with
\begin{equation}\label{KKTcomposite}
\nabla\ph_0(\ox)+\nabla g(\ox)^*\oy = 0\;\mbox{ and }\;\oy\in\partial\psi\big(g(\ox)\big).
\end{equation}
 Suppose in addition that $\psi$ is subdifferentially continuous at $g(\ox)$ for $\oy$. Then we have the following assertions:

{\bf(i)} The variational sufficient condition for local optimality in \eqref{composite} holds at $\ox$ if and only if $\psi$ is prox-regular at $g(\ox)$ for $\oy$ and there exist neighborhoods $U$ of $\ox$ and $V$ of $0$ such that 
\begin{equation}\label{cvsufficiency}
\langle\nabla^2\ph_0(x)w,w\rangle+\langle\nabla^2\langle y,g\rangle(x)w,w\rangle+\langle u,\nabla g(x)w\rangle\ge 0
\end{equation}
for all $x\in U$, $v\in V$, $y\in\Lambda(x,v)$, $u\in\partial^2\psi(g(x),y)(\nabla g(x)w)$, $w\in\R^n$, where $\Lambda(x,v)$ is a singleton in this case.

{\bf(ii)} The strong variational sufficient condition for local optimality in \eqref{composite} holds at $\ox$ with modulus $\sigma>0$ if and only if  $\psi$ is  prox-regular at $g(\ox)$ for $\oy$ and there exist neighborhoods $U$ of $\ox$ and $V$ of $0$ such that 
\begin{equation}\label{strsufficiency}
\langle\nabla^2\ph_0(x)w,w\rangle+\langle\nabla^2\langle y,g\rangle(x)w,w\rangle+\langle u,\nabla g(x)w\rangle\ge\sigma\|w\|^2
\end{equation}
for all $x\in U$, $v\in V$, $y\in\Lambda(x,v)$, $u\in\partial^2\psi(g(x),y)(\nabla g(x)w)$, $w\in\R^n$, where $\Lambda(x,v)$ is a singleton in this case.

Furthermore, the strong variational sufficiency in {\rm(ii)} with some modulus $\sigma>0$ is equivalent to the prox-regularity of $\psi$ at $g(\ox)$ for $\oy$ together with the fulfillment of the pointbased condition
\begin{equation}\label{varsufficiency}
\langle\nabla^2\ph_0(\ox)w,w\rangle+\langle\nabla^2\langle\oy,g\rangle(\ox)w,w\rangle +\langle u,\nabla g(\ox)w\rangle>0
\end{equation}
whenever $u\in\partial^2\psi\big(g(\ox),\oy\big)\big(\nabla g(\ox)w\big)$ and $w\ne 0$.
\end{Theorem}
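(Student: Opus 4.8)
The plan is to read off all three assertions from the abstract second-order characterizations of variational convexity and variational strong convexity obtained in Section~\ref{sec:codcharacterizationconvex}, applied to the cost function $\varphi:=\ph_0+\psi\circ g$ at $\ox$ for $\ov=0$, and then to translate both the structural hypotheses (subdifferential continuity, prox-regularity) and the resulting second-order conditions into the data of \eqref{composite} by means of the first- and second-order calculus that is available under the full-rank assumption $\rank\nabla g(\ox)=m$. By Definition~\ref{vrforun}, variational sufficiency at $\ox$ is exactly variational convexity of $\varphi$ at $\ox$ for $0$, and strong variational sufficiency with modulus $\sigma$ is variational strong convexity of $\varphi$ at $\ox$ for $0$ with that modulus, so the task is purely one of specialization of Theorems~\ref{2ndconvexsub}, \ref{2ndstrongconvex} and~\ref{pointPD}.

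First I would record the structural reductions. Since $\ph_0$ is $\mathcal{C}^2$, adding it affects neither prox-regularity nor subdifferential continuity, so it suffices to treat $\psi\circ g$. Because $\rank\nabla g(\ox)=m$, the operator $\nabla g(\ox)^*\colon\R^m\to\R^n$ is injective and $\nabla g(\ox)$ is surjective; hence the qualification condition needed for the first- and second-order chain rules for $\psi\circ g$ holds automatically, and full rank persists for all $x$ near $\ox$. From the equality form of the first-order subdifferential chain rule one gets $\partial\varphi(x)=\nabla\ph_0(x)+\nabla g(x)^*\partial\psi(g(x))$ for $x$ near $\ox$, so $0\in\partial\varphi(\ox)$ is equivalent to \eqref{KKTcomposite}, the multiplier set $\Lm(x,v)$ from \eqref{Lagrangemultiplier} has at most one element for $(x,v)$ near $(\ox,0)$ (injectivity of $\nabla g(x)^*$), it is nonempty exactly when $v\in\partial\varphi(x)$, and the unique selection $(x,v)\mapsto y(x,v)$ is continuous. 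The chain rules for subdifferential continuity and for prox-regularity of compositions with $\mathcal{C}^2$ maps of surjective Jacobian then give: (a) $\varphi$ is subdifferentially continuous at $\ox$ for $0$, which makes Theorems~\ref{2ndconvexsub}, \ref{2ndstrongconvex} and~\ref{pointPD} applicable; and (b) $\varphi$ is prox-regular at $\ox$ for $0$ \emph{iff} $\psi$ is prox-regular at $g(\ox)$ for $\oy$.

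Next I would invoke the second-order subdifferential chain rule. Under surjectivity of $\nabla g(x)$ for $x$ near $\ox$, for every $(x,v)\in\gph\partial\varphi$ close to $(\ox,0)$ with the unique multiplier $y=y(x,v)\in\Lm(x,v)$ one has
\begin{equation*}
\partial^2\varphi(x,v)(w)=\nabla^2\ph_0(x)w+\nabla^2\langle y,g\rangle(x)w+\nabla g(x)^*\,\partial^2\psi\big(g(x),y\big)\big(\nabla g(x)w\big),\qquad w\in\R^n,
\end{equation*}
and the same holds at the single point $(\ox,0)$ with $y=\oy$. Pairing with $w$ and using the adjoint identity $\langle\nabla g(x)^*u,w\rangle=\langle u,\nabla g(x)w\rangle$, the neighborhood inequality ``$\langle z,w\rangle\ge 0$ for all $z\in\partial^2\varphi(x,v)(w)$, all $(x,v)\in\gph\partial\varphi\cap(U\times V)$, all $w$'' from Theorem~\ref{2ndconvexsub}(iii) becomes precisely \eqref{cvsufficiency}, the analogous inequality ``$\langle z,w\rangle\ge\sigma\|w\|^2$'' from Theorem~\ref{2ndstrongconvex}(iii) becomes \eqref{strsufficiency}, and the pointbased positive-definiteness \eqref{PSDpointbased} from Theorem~\ref{pointPD} becomes \eqref{varsufficiency}. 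Combining these identifications with the structural reductions of the previous paragraph yields part (i) from Theorem~\ref{2ndconvexsub}, part (ii) from Theorem~\ref{2ndstrongconvex}, and the final pointbased assertion from Theorem~\ref{pointPD}; in each case both directions of the ``iff'' are covered, since Theorems~\ref{2ndconvexsub}, \ref{2ndstrongconvex} and~\ref{pointPD} are themselves equivalences and the chain rules used are exact equalities.

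The main obstacle I anticipate is the bookkeeping required to make the \emph{neighborhood} form of the chain rule fit the statement: one must verify that the qualification condition for the second-order chain rule holds not merely at $\ox$ but uniformly on a neighborhood (which does follow from openness of the full-rank condition), that the continuous single-valued selection $x\mapsto y(x,v)$ is well behaved enough for the term $\nabla^2\langle y,g\rangle(x)$ to be handled pointwise, and that the several neighborhoods produced by Theorems~\ref{2ndconvexsub}, \ref{2ndstrongconvex}, \ref{pointPD}, by the persistence of the rank condition, and by the domains of validity of the chain rules, can all be shrunk to one common pair $U\times V$, so that every quantifier in \eqref{cvsufficiency}, \eqref{strsufficiency} and~\eqref{varsufficiency} refers to that single pair. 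A secondary delicate point is the transfer of prox-regularity in the direction ``$\varphi$ prox-regular at $\ox$ for $0$ $\Rightarrow$ $\psi$ prox-regular at $g(\ox)$ for $\oy$'', which relies on the fact that surjectivity of $\nabla g(\ox)$ makes $g$ an open mapping near $\ox$, so that prox-regularity of $\psi\circ g=\varphi-\ph_0$ genuinely propagates back to $\psi$.
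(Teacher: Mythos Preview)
Your proposal is correct and follows essentially the same approach as the paper: both reduce the statement to Theorems~\ref{2ndconvexsub}, \ref{2ndstrongconvex}, and \ref{pointPD} applied to $\varphi=\ph_0+\psi\circ g$, using the persistence of full rank near $\ox$, the first-order chain rule \eqref{chainrulecom} to identify $\Lambda(x,v)$ as a singleton and to transfer prox-regularity and subdifferential continuity between $\varphi$ and $\psi$, and the exact second-order sum and chain rules (\cite[Proposition~1.121, Theorem~1.127]{Mordukhovich06}) to convert the abstract conditions on $\partial^2\varphi$ into \eqref{cvsufficiency}, \eqref{strsufficiency}, \eqref{varsufficiency}. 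The obstacles you flag (uniform validity of the chain rule on a neighborhood, shrinking to a common $U\times V$, the reverse transfer of prox-regularity) are exactly the bookkeeping the paper dispatches in one sentence via \eqref{chainrulecom} and the observation that full rank is an open condition.
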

\begin{proof} Note that the full rank assumption $\rank\nabla g(\ox)=m$ implies that there exists $\epsilon>0$ such that $\rank\nabla g(x)=m$ for all $x\in \mathbb{B}_\epsilon(\ox)$. Thus it follows from the first-order subdifferential sum and chain rules in \cite[Proposition~1.10 and Exercise~1.72]{Mor18} that we have the equivalences
\begin{equation}\label{chainrulecom} 
v\in\partial\varphi(x)\iff v\in\nabla\ph_0(x)+\nabla g(x)^*\partial\psi\big(g(x)\big) \iff\Lambda (x,v)\;\text{ is a singleton}
\end{equation} 
for all $x\in\mathbb{B}_\epsilon(\ox)$, which verifies the existence of a unique $\oy\in\R^m$ satisfying \eqref{KKTcomposite}.

Next we observe that it follows from \eqref{chainrulecom} and the definitions of prox-regularity and subdifferential continuity in Section~\ref{sec:prelim} that the prox-regularity and subdifferential continuity properties of the function $\ph$ an \eqref{composite} at $\ox$ for $0$ are equivalent to the corresponding properties of $\psi$ at $g(\ox)$ for $\oy$. Furthermore, Remark~\ref{disvar}(iii) tells us that the variational convexity of a function yields its prox-regularity at the reference point. Therefore, we only need to verify that the variational convexity as well as the variational strong convexity of $\varphi$ at $\ox$ for $0$ with and without modulus $\sigma>0$ hold if and only if the corresponding conditions \eqref{cvsufficiency}, \eqref{strsufficiency}, and \eqref{varsufficiency} are satisfied.

To prove all of this, we employ the second-order subdifferential sum rule from \cite[Proposition~1.121]{Mordukhovich06} and the second-order subdifferential chain rule from \cite[Theorem~1.127]{Mordukhovich06} to {the function $\ph$ in} \eqref{composite}, which tell us that 
\begin{align*}
\partial^2\varphi(x,v)(w)&=\nabla^2\ph_0(x)w+\partial^2(\psi\circ g)\big(x,v-\nabla \ph_0(x)\big)(w)\\
&=\nabla^2\ph_0(x)w+\nabla^2\langle y,g\rangle(x)w+\nabla g(x)^*\partial^2 \psi\big(g(x),y\big)\big(\nabla g(x)w\big)
\end{align*}
for each $x\in\mathbb{B}_\epsilon(\ox)$, $v\in\partial\varphi(x)$, $w\in\R^n$, and the unique vector $y$ from $\Lambda(x,v)$. By the obtained representation of $\partial^2\ph(x,v)$, it follows that the conditions in \eqref{cvsufficiency} and \eqref{strsufficiency} are equivalent to the existence of neighborhoods $U$ of $\ox$ and $V$ of $0$ such that we have the estimates 
\begin{equation}\label{PSDvarphi1} 
\langle z,w\rangle\ge 0\;\text{ for all }\;z\in\partial^2\varphi(x,v)(w),\; (x,v)\in\gph\partial\varphi\cap(U\times V),\;w\in\R^n,
\end{equation} 
\begin{equation}\label{PSDvarphi2}
\langle z,w\rangle\ge\sigma\|w\|^2\;\text{ for all }\;z\in\partial^2\varphi(x,v)(w),\;(x,v)\in\gph\partial\varphi\cap(U\times V),\;w\in\R^n,
\end{equation} 
respectively. In the same way, \eqref{varsufficiency} reduces to the positive-definiteness condition
\begin{equation}\label{PSDvarphi3}
\langle z,w\rangle>0\;\text{ for all }\;z\in\partial^2\varphi(\ox,0)(w),\;w\ne 0.
\end{equation}
Finally, we deduce from the second-order subdifferential characterization in Theorems~\ref{2ndconvexsub}, \ref{2ndstrongconvex}, and \ref{pointPD} that the conditions in \eqref{PSDvarphi1}, \eqref{PSDvarphi2}, and \eqref{PSDvarphi3} are equivalent to the variational convexity of $\varphi$ and the variational strong convexity of $\varphi$ at $\ox$ for $0$ with and without modulus $\sigma$, respectively. This completes the proof of the theorem.
\end{proof}

Our further goals are deriving effective conditions ensuring the fulfillment of variational sufficiency and strong variational sufficiency for local optimality in the composite model \eqref{composite} and also characterizing these properties 
{\em without} imposing the full rank assumption on the Jacobian $\nabla g(\ox)$. To proceed in this direction requires dealing with particular classes of compositions in \eqref{composite}, which cover a large territory in variational analysis and optimization.\vspace*{0.03in}

Following \cite{Rockafellar98}, recall that an l.s.c.\ function $\th:\R^n\to \overline{\R}$ is {\em strongly amenable} at $\ox$ if there exists neighborhood $U$ of $\ox$ on which $\th$ can be represented in the composition form
$\th =\psi\circ g$ with a $\mathcal{C}^2$-smooth mapping $g\colon U\to\R^m$ and a proper l.s.c.\ convex function $\psi:\R^m\to\overline{\R}$ such that the following {\em first-order qualification condition} holds:
\begin{equation}\label{1stqualify}
\partial^\infty\psi(\oz)\cap\ker\nabla g(\ox)^*=\{0\}\;\mbox{ with }\;\oz:=g(\ox)
\end{equation}
If in addition $\psi$ is piecewise linear-quadratic as in \cite{Rockafellar98}, then $\th$ is called {\em fully amenable} at $\ox$. Note that \eqref{1stqualify} is automatically satisfied if either $\psi$ is locally Lipschitzian around $\oz$, or $\nabla g(\ox)$ is of full rank, while neither of these conditions is required for the fulfillment of \eqref{1stqualify}. 

We are also going to use the {\em second-order qualification condition} (SOQC) from \cite{mr} for problem \eqref{composite} at $\ox$, which is formulated as follows:
\begin{equation}\label{SOQC}
\partial^2\psi(\oz,\oy)(0)\cap\ker\nabla g(\ox)^*=\{0\}\;\text{ with }\;\oy\in \partial\psi\big(\oz\big)\;\text{ and }\;\oz:=g(\ox).
\end{equation}

If $\psi$ is {\em convex}, then the following easy relationship between the qualification conditions \eqref{1stqualify} and \eqref{SOQC} holds.

\begin{Proposition}[\bf relationships between the first- and second-order qualification conditions]\label{2ndqualifyimplies1st} Let $\psi\colon\R^m \to\overline{\R}$ be a convex function. Then for each $\bar{z}\in\R^m$ and  $\bar{y}\in\partial\psi(\bar{z})$, we have the inclusions:
\begin{equation}\label{singular2ndinclusion} 
\partial^\infty\psi(\bar{z})\subset\breve{\partial}^2\psi(\bar{z},\bar{y})(0)\subset {\partial}^2\psi(\bar{z},\bar{y})(0). 
\end{equation}
Consequently, the second-order qualification condition \eqref{SOQC} yields the first-order qualification condition \eqref{1stqualify}.
\end{Proposition}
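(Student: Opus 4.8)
The plan is to prove the two inclusions in \eqref{singular2ndinclusion} one at a time and then deduce the last assertion in a single line. The right-hand inclusion $\breve{\partial}^2\psi(\oz,\oy)(0)\subset{\partial}^2\psi(\oz,\oy)(0)$ is just \eqref{2ndinclusioncombinelimit} applied at the point $(\oz,\oy)\in\gph\partial\psi$ with $w=0$, so all the work lies in the left-hand inclusion.

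For the left-hand inclusion I would take any $u\in\partial^\infty\psi(\oz)$. By the definition of the singular subdifferential in \eqref{MordukhovichSubdifferential} there are sequences $\lambda_k\downarrow 0$, $z_k\to\oz$ with $\psi(z_k)\to\psi(\oz)$, and $v_k\in\widehat\partial\psi(z_k)$ such that $\lambda_k v_k\to u$. Since $\psi$ is convex, each $v_k$ obeys the subgradient inequality $\psi(z)\ge\psi(z_k)+\langle v_k,z-z_k\rangle$ for all $z\in\R^m$; multiplying by $\lambda_k>0$ and passing to the limit as $k\to\infty$ (the right-hand side tends to $0$ because $\lambda_k\downarrow 0$ and $\psi(z_k)\to\psi(\oz)$ is finite) gives $\langle u,z-\oz\rangle\le 0$ for every $z\in\dom\psi$. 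Next I would observe that, by \eqref{reg-cod} and Definition~\ref{2nd}(ii), the assertion $u\in\breve{\partial}^2\psi(\oz,\oy)(0)$ is equivalent to $(u,0)\in\widehat N_{\gph\partial\psi}(\oz,\oy)$, which by \eqref{FrechetSubdifferential} and \eqref{NormalCones} means that the upper limit of $\langle(u,0),(z,y)-(\oz,\oy)\rangle/\|(z,y)-(\oz,\oy)\|$ over $(z,y)\to(\oz,\oy)$ in $\gph\partial\psi$ is nonpositive. But for each such $(z,y)$ one has $z\in\dom\psi$, so the numerator equals $\langle u,z-\oz\rangle\le 0$ by the previous step; hence every difference quotient is $\le 0$ and the upper limit is $\le 0$, which is exactly what is required. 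This proves $\partial^\infty\psi(\oz)\subset\breve{\partial}^2\psi(\oz,\oy)(0)$.

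Finally, intersecting the established chain \eqref{singular2ndinclusion} with $\ker\nabla g(\ox)^*$ at $\oz=g(\ox)$ yields $\partial^\infty\psi(\oz)\cap\ker\nabla g(\ox)^*\subset\partial^2\psi(\oz,\oy)(0)\cap\ker\nabla g(\ox)^*$, so if \eqref{SOQC} holds the left-hand set reduces to $\{0\}$, which is precisely the first-order qualification condition \eqref{1stqualify}. I do not anticipate a real obstacle here: the only step that needs a moment of care is the passage from the convex subgradient inequality to $\langle u,z-\oz\rangle\le 0$ on $\dom\psi$ (in particular recognizing that $\widehat\partial\psi$ coincides with the classical convex subdifferential for convex $\psi$), after which the Fr\'echet-normal-cone computation is immediate; note also that only the one-sided relation $\partial^\infty\psi(\oz)\subset N_{\dom\psi}(\oz)$ is used, not the (valid) equality.
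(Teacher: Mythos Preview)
Your proposal is correct and follows essentially the same route as the paper: establish $\langle u,z-\oz\rangle\le 0$ for all $z\in\dom\psi$, then verify $(u,0)\in\widehat N_{\gph\partial\psi}(\oz,\oy)$ directly from the definition, and finally intersect with $\ker\nabla g(\ox)^*$. The only difference is cosmetic: the paper obtains the inequality $\langle u,z-\oz\rangle\le 0$ by quoting \cite[Proposition~8.12]{Rockafellar98} (which identifies $\partial^\infty\psi(\oz)$ with $N_{\dom\psi}(\oz)$ for convex $\psi$), whereas you rederive this inclusion from the sequential definition of $\partial^\infty\psi$ and the convex subgradient inequality.
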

\begin{proof} Pick $v\in\partial^\infty\psi(z)$ and deduced from the convexity of $\psi$ and \cite[Proposition~8.12]{Rockafellar98} that
$$
\langle v,z-\bar{z}\rangle\le 0\quad\text{for all }\;z \in\dom\psi,
$$
which immediately implies that 
$$
\underset{(z,y)\overset{\text{\rm gph}\,\partial\psi}{\to}(\bar{z},\oy)}{\limsup}  \frac{\langle(v,0),(z,y)-(\bar{z},\bar{y})\rangle}{\|z-\bar{z}\|+\|y-\oy\|}\le 0,
$$
and thus $(v,0)\in\widehat{N}_{\text{\rm gph}\,\partial\psi}(\bar{z},\oy)$, i.e., $v\in\breve{\partial}^2\psi(\bar{z},\oy)(0)$. This  justifies the first inclusion in \eqref{singular2ndinclusion}. The second inclusion therein is obvious, and so \eqref{singular2ndinclusion} is verified. Implication \eqref{SOQC}$\Longrightarrow$\eqref{1stqualify} clearly follows from \eqref{singular2ndinclusion}. 
\end{proof}

Next we verify the important {\em robustness} property of SOQC for strongly amenable compositions.

\begin{Lemma}[\bf robustness of the second-order qualification condition]\label{robustSOQClm} Let $\ox\in\R^n$ be a stationary point of the composite optimization problem \eqref{composite} under the standing assumption on $\ph_0,\psi$, and $g$. Suppose in addition that $\psi$ and $g$ be mappings from the composite representation of a strongly amenable function at $\ox$ and that the second-order qualification condition \eqref{SOQC} is satisfied at $\ox$. Then there are neighborhoods $\mathcal{X}$ of $\ox$ and $\mathcal{O}$ of $0$ with 
\begin{equation}\label{robustSOQC}
\partial^2\psi\big(g(x),y\big)(0)\cap\ker\nabla g(x)^*=\{0\}\;\text{ whenever }\;x\in \mathcal{X},\;y\in\partial\psi\big(g(x)\big),\;\mbox{ and }\;\nabla\varphi_0(x)+\nabla g(x)^*y\in\mathcal{O}. 
\end{equation}
\end{Lemma}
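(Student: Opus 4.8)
The plan is to argue by contradiction, exploiting the outer semicontinuity (robustness) of the limiting second-order subdifferential together with the closedness of the graph of $\partial\psi$. First I would suppose the conclusion fails: then there exist sequences $x_k\to\ox$, $y_k\in\partial\psi\big(g(x_k)\big)$ with $v_k:=\nabla\varphi_0(x_k)+\nabla g(x_k)^*y_k\to 0$, and nonzero vectors $d_k\in\partial^2\psi\big(g(x_k),y_k\big)(0)\cap\ker\nabla g(x_k)^*$; after normalizing we may assume $\|d_k\|=1$. The key point to establish first is that the sequence $\{y_k\}$ is bounded. This is where the \emph{strong amenability} and the first-order qualification condition \eqref{1stqualify} (which follows from \eqref{SOQC}, cf. Proposition~\ref{2ndqualifyimplies1st}) enter: by a standard argument, if $\|y_k\|\to\infty$, then $y_k/\|y_k\|$ would subconverge to some unit vector $y^*$ lying in $\ker\nabla g(\ox)^*$ (using $v_k\to 0$, continuity of $\nabla\varphi_0$ and $\nabla g$, so that $\nabla g(x_k)^*(y_k/\|y_k\|)\to 0$) and also, by the outer semicontinuity of $\partial^\infty\psi$ relative to the graph of $\partial\psi$, in $\partial^\infty\psi(\oz)$; this contradicts \eqref{1stqualify}. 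Hence $\{y_k\}$ is bounded.

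Next I would pass to a convergent subsequence: $y_k\to\oy'$ for some $\oy'\in\R^m$, $d_k\to\bar d$ with $\|\bar d\|=1$. Since $\gph\partial\psi$ is closed and $g(x_k)\to\oz:=g(\ox)$, we get $\oy'\in\partial\psi(\oz)$, and since $v_k\to 0$ and $\nabla\varphi_0(x_k)+\nabla g(x_k)^*y_k=v_k$, taking limits gives $\nabla\varphi_0(\ox)+\nabla g(\ox)^*\oy'=0$; thus $\oy'$ is a multiplier at the stationary point $\ox$. (If the multiplier at $\ox$ is unique one identifies $\oy'=\oy$; otherwise I would phrase \eqref{SOQC} as holding at \emph{every} multiplier, or simply observe that the argument produces a contradiction with \eqref{SOQC} at the particular multiplier $\oy'$.) From $d_k\in\ker\nabla g(x_k)^*$ and continuity of $\nabla g$ we obtain $\bar d\in\ker\nabla g(\ox)^*$. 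The crucial final step is robustness of the limiting second-order subdifferential: because $d_k\in\partial^2\psi\big(g(x_k),y_k\big)(0)=(D^*\partial\psi)\big(g(x_k),y_k\big)(0)$ with $\big(g(x_k),y_k\big)\to(\oz,\oy')$ in $\gph\partial\psi$ and $d_k\to\bar d$, the definition of the coderivative via the limiting normal cone — which is itself a Painlevé--Kuratowski outer limit, hence closed — yields $\bar d\in(D^*\partial\psi)(\oz,\oy')(0)=\partial^2\psi(\oz,\oy')(0)$. Therefore $\bar d\in\partial^2\psi(\oz,\oy')(0)\cap\ker\nabla g(\ox)^*$ with $\bar d\ne 0$, contradicting \eqref{SOQC}.

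I expect the main obstacle to be the \textbf{boundedness of the multiplier sequence} $\{y_k\}$. Everything else is a routine closed-graph / outer-limit passage, but controlling the $y_k$ requires the full force of strong amenability: one needs that near $\ox$ the composite retains a valid first-order qualification condition uniformly, so that the Lagrange multiplier map $(x,v)\mapsto\Lambda(x,v)$ is locally bounded. A clean way to package this is to invoke the known robustness of the basic constraint qualification \eqref{1stqualify} for strongly amenable functions (e.g. via \cite[Proposition~1.10 and Exercise~1.72]{Mor18} on first-order chain rules, together with the outer semicontinuity of $\partial^\infty\psi$), which guarantees neighborhoods on which $\partial^\infty\psi\big(g(x)\big)\cap\ker\nabla g(x)^*=\{0\}$ persists; local boundedness of $\Lambda$ then follows by the standard horizon/recession argument sketched above. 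Once boundedness is in hand, shrinking $\mathcal{X}$ and $\mathcal{O}$ to where the contradiction sequences cannot exist yields the desired neighborhoods in \eqref{robustSOQC}.
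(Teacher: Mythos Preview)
Your proposal is correct and follows essentially the same contradiction argument as the paper: negate \eqref{robustSOQC}, extract sequences, split on boundedness of $\{y_k\}$ (using \eqref{1stqualify} to rule out the unbounded case), and pass to the limit via outer semicontinuity of the limiting normal cone to contradict \eqref{SOQC}. The only place where the paper is more explicit is the step $\tilde y:=\lim y_k/\|y_k\|\in\partial^\infty\psi(g(\ox))$: rather than invoking an abstract ``outer semicontinuity of $\partial^\infty\psi$ relative to $\gph\partial\psi$'' (which would require $\psi$-attentive convergence of $g(x_k)$), the paper uses the convexity of $\psi$ directly via the subgradient inequality and the characterization $\partial^\infty\psi(\oz)=N_{\dom\psi}(\oz)$, so you may want to make that passage precise in the same way.
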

\begin{proof} Suppose that \eqref{robustSOQC} fails and then find sequences $x_k\to\ox$,  $y_k\in\partial\psi(g(x_k))$ with
\begin{equation}\label{lim=0}
\lim_{k\to\infty}\big[\nabla\varphi_0(x_k)+\nabla g(x_k)^*y_k\big]=0,   
\end{equation}
\begin{equation}\label{zkincap}
0\ne z_k\in\partial^2\psi\big(g(x^k),y^k\big)(0)\cap\ker\nabla g(x^k)^*\;\text{for all }\ k \in\\N.
\end{equation}
If $\{y_k\}$ bounded, suppose without loss of generality that $y_k\to\bar{y}$ as $k\to\infty$. Put $\tilde{z_k}:=z_k/\|z_k\|$ for all $k\in\\N$. Then $\|\tilde{z_k}\|=1$, and we get without loss of generality that $\tilde{z_k}\to\tilde{z}$ as $k\to\infty$ with $\|\tilde{z}\|=1$. It follows from \eqref{zkincap} that
$$
\tilde{z_k}=\disp\frac{z_k}{\|z_k\|}\in\partial^2\psi\big(g(x^k),y^k\big)(0)\cap\ker\nabla g(x^k)^*\;\text{ for all }\;k\in\\N,
$$
and hence $ \tilde{z}\in\partial^2\psi(g(\ox),\bar{y})(0)\cap\ker\nabla g(\ox)^*$. This yields $\tilde{z}=0$ due to \eqref{SOQC}, a contradiction.

If $\{y_k\}$ is not bounded, we suppose without loss of generality that $\|y_k\|\to \infty$ as $k\to\infty$ and define $\tilde{y_k}:= y_k/\|y_k\|$ for all $k\in\N$. Then $\|\tilde{y_k}\|=1$, and thus get again without loss of generality that $\tilde{y_k} \to \tilde{y}$ as $k\to\infty$ with $\|\tilde{y}\|=1$. It follows from \eqref{lim=0} that $\nabla g(\ox)^*\tilde{y}=0$. To show next that $\tilde{y}\in\partial^\infty\psi(g(\ox))$, we recall that $y_k\in\partial\psi(g(x_k))$ and deduce from the convexity of $\psi$ together with \cite[Proposition~8.12]{Rockafellar98} that 
$$
\psi(z)\ge\psi\big(g(x_k)\big)+\langle y_k,z-g(x_k)\rangle\;\text{ for all }\;z\in\R^m\;\mbox{ and }\;k\in\N, 
$$
which implies in turn the estimate
$$
\langle\tilde{y_k},z -g(x_k)\rangle\le\frac{\psi(z)-\psi\big(g(x_k)\big)}{\|y_k\|}\;\mbox{ as }\;z\in\R^m,\;k\in\N.
$$
Thus we have for each $z\in\dom\psi$ that
\begin{equation}\label{limsupineq}
\langle\tilde{y},z-g(\ox)\rangle=\lim_{k\to\infty}\langle\tilde{y_k},z-g(x_k)\rangle\le\limsup_{k\to\infty}\frac{\psi(z)-\psi\big(g(x_k)\big)}{\|y_k\|}.
\end{equation}
Remembering that $\psi$ is l.s.c.\ leads us to the conditions
\begin{equation}\label{limsupinf}
\limsup_{k\to\infty}\big(-\psi(g(x_k))\big)=-\liminf_{k\to\infty}\psi\big(g(x_k)\big)\le-\psi\big(g(\ox)\big).
\end{equation}
Combining \eqref{limsupineq} and \eqref{limsupinf} with $\psi(z)<\infty$ and $1/\|y_k\|\to 0$ as $k\to\infty$, we conclude that $\langle\tilde{y},z-g(\ox)\rangle\le 0$ for all $z\in\dom\psi$. This tells us that $\tilde{y}\in\partial^\infty\psi(g(\ox))$ and hence $\tilde{y}\in\partial^\infty\psi(g(\ox))\cap\ker\nabla g(\ox)^*$. By the strong amenability of $\psi\circ g$, the first-order qualification condition \eqref{1stqualify} is satisfied, and thus $\tilde{y}=0$. This is a contradiction, which completes the proof of the lemma.
\end{proof} 

Now we are ready to obtain efficient conditions that ensure variational and strong variational sufficiency for composite optimization problems \eqref{composite} without the full rank assumption. These conditions address the general class of {\em strongly amenable} compositions in \eqref{composite}.

\begin{Theorem}[\bf variational and strong variational sufficiency for local optimality with strongly amenable compositions]\label{vrcoderivativenotfullrank} Let $\ox\in\R^n$ be a stationary point of the composite optimization problem \eqref{composite} under the standing assumption on $\ph_0,\psi$, and $g$. Suppose in addition that $\psi$ and $g$ be mappings from the composite representation of a strongly amenable function at $\ox$ and that the second-order qualification condition \eqref{SOQC} is satisfied at $\ox$. Then we have the following assertions:

{\bf(i)} The variational sufficiency for local optimality in \eqref{composite} holds at $\ox$ if there exist neighborhoods $U$ of $\ox$ and $V$ of $0$ such that \eqref{cvsufficiency} is satisfied for all $x\in U$, $v\in V$, $y\in\Lambda(x,v)$, $u\in\partial^2\psi(g(x),y)(\nabla g(x)w)$, and $w\in\R^n$.

{\bf(ii)} The strong variational sufficiency for local optimality in \eqref{composite} holds at $\ox$ with modulus $\sigma>0$ if there exist neighborhoods $U$ of $\ox$ and $V$ of $0$ such that the neighborhood condition \eqref{strsufficiency} is satisfied for all $x\in U$, $v\in V$, $y\in\Lambda(x,v)$, $u\in\partial^2\psi(g(x),y)(\nabla g(x)w)$, and $w\in\R^n$.

{\bf(iii)} The strong variational sufficient condition for local optimality in \eqref{composite} holds at $\ox$ if the pointbased condition \eqref{varsufficiency} is satisfied for any $\oy\in\Lambda(\ox,0)$.
\end{Theorem}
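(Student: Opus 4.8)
The plan is to run exactly the machinery of Theorem~\ref{vrcoderivative}, with the crucial modification that the \emph{exact} second-order chain rule available under the full-rank hypothesis is replaced by its one-sided (upper estimate) version, which is valid under the second-order qualification condition \eqref{SOQC}. Since this leaves us only with an inclusion for $\partial^2\varphi$, we obtain the \emph{sufficiency} of the inequalities \eqref{cvsufficiency}--\eqref{varsufficiency}, which is precisely what the three assertions assert.

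First I would record the structural preliminaries. Because $\psi$ is convex and l.s.c., it is continuously prox-regular everywhere; by strong amenability the composition $\psi\circ g$ is continuously prox-regular at $\ox$ (see \cite[Chapter~13]{Rockafellar98}), and since $\varphi_0$ is $\mathcal{C}^2$, the cost function $\varphi$ in \eqref{composite} is continuously prox-regular at $\ox$ for $0$; in particular Theorems~\ref{2ndconvexsub}, \ref{2ndstrongconvex}, and \ref{pointPD} apply to $\varphi$. Next, Proposition~\ref{2ndqualifyimplies1st} turns \eqref{SOQC} into the first-order qualification condition \eqref{1stqualify}, so the first-order sum and chain rules give $v\in\partial\varphi(x)\Longleftrightarrow\Lambda(x,v)\ne\emp$ for $x$ near $\ox$. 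Finally, Lemma~\ref{robustSOQClm} upgrades \eqref{SOQC} to its robust form: there are neighborhoods $\mathcal{X}$ of $\ox$ and $\mathcal{O}$ of $0$ with $\partial^2\psi(g(x),y)(0)\cap\ker\nabla g(x)^*=\{0\}$ whenever $x\in\mathcal{X}$, $y\in\partial\psi(g(x))$, and $\nabla\varphi_0(x)+\nabla g(x)^*y\in\mathcal{O}$, which is exactly what licenses the second-order chain rule at each relevant base point.

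Then I would shrink the neighborhoods $U,V$ from the hypotheses so that $U\subset\mathcal{X}$ and $V\subset\mathcal{O}$, and combine the exact second-order sum rule \cite[Proposition~1.121]{Mordukhovich06} with the inclusion form of the second-order chain rule \cite[Theorem~1.127]{Mordukhovich06}, valid under the robust SOQC just obtained: for every $(x,v)\in\gph\partial\varphi\cap(U\times V)$, $w\in\R^n$, and $z\in\partial^2\varphi(x,v)(w)$ there exist $y\in\Lambda(x,v)$ and $u\in\partial^2\psi(g(x),y)(\nabla g(x)w)$ with $z=\nabla^2\varphi_0(x)w+\nabla^2\langle y,g\rangle(x)w+\nabla g(x)^*u$, so that $\langle z,w\rangle$ equals the left-hand side of \eqref{cvsufficiency} (resp.\ \eqref{strsufficiency}). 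Since the assumed inequality holds for \emph{all} $y\in\Lambda(x,v)$, we get $\langle z,w\rangle\ge 0$ (resp.\ $\ge\sigma\|w\|^2$) for every $z\in\partial^2\varphi(x,v)(w)$, and Theorem~\ref{2ndconvexsub}(iii) (resp.\ Theorem~\ref{2ndstrongconvex}(iii)) yields variational convexity (resp.\ variational strong convexity with modulus $\sigma$) of $\varphi$ at $\ox$ for $0$, i.e., assertions (i) and (ii). For (iii) I would apply the same chain-rule inclusion at the single base point $(\ox,0)$: any $z\in\partial^2\varphi(\ox,0)(w)$ has such a representation with some $\oy\in\Lambda(\ox,0)$, so $\langle z,w\rangle$ equals the left-hand side of \eqref{varsufficiency}, which is strictly positive for $w\ne 0$ by hypothesis; Theorem~\ref{pointPD} then gives strong variational sufficiency at $\ox$ with some modulus.

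The main obstacle — and the reason these are stated as implications rather than equivalences — is precisely the loss of the reverse inclusion in the second-order chain rule once the full-rank assumption on $\nabla g(\ox)$ is dropped: in general $\partial^2\varphi(x,v)(w)$ can be strictly smaller than the set produced by the chain rule, so the pointwise inequalities need not be \emph{necessary}. A secondary technical point is to make sure the robust SOQC of Lemma~\ref{robustSOQClm} really does cover every base point $(g(x),y)$ arising from $(x,v)\in\gph\partial\varphi\cap(U\times V)$ with $y\in\Lambda(x,v)$; this is immediate since then $\nabla\varphi_0(x)+\nabla g(x)^*y=v\in V\subset\mathcal{O}$, so the hypotheses of the lemma are met after the shrinking of $U$ and $V$.
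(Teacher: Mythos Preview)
Your proposal is correct and follows essentially the same route as the paper's own proof: establish continuous prox-regularity of $\varphi$ via strong amenability, invoke Lemma~\ref{robustSOQClm} to propagate SOQC to nearby base points, apply the inclusion form of the second-order subdifferential chain rule together with the exact sum rule to obtain the upper estimate \eqref{chainrulecomamenable}, and then read off \eqref{PSDvarphi1}--\eqref{PSDvarphi3} from the hypotheses so that Theorems~\ref{2ndconvexsub}, \ref{2ndstrongconvex}, and \ref{pointPD} finish the job. The only cosmetic slip is your citation: the chain rule you need here is the \emph{inclusion} version for strongly amenable compositions (the paper cites \cite[Corollary~3.76]{Mordukhovich06}), not \cite[Theorem~1.127]{Mordukhovich06}, which is the full-rank equality used in Theorem~\ref{vrcoderivative}; your surrounding text makes clear you mean the former.
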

\begin{proof} Note that the first-order qualification condition \eqref{1stqualify} is clearly {\em robust} with respect of small perturbations of the reference point, which means that there exists $\epsilon>0$ such that 
$$
\partial^\infty\psi\big(g(x)\big)\cap\ker\nabla g(x)^*=\{0\}\;\text{ for all}\;x\in \mathbb{B}_\epsilon(\ox).
$$
Thus we can apply the aforementioned subdifferential sum rule and then the chain rule from \cite[Theorem~4.5]{Mor18} to the (lower regular) strongly amenable composition $\psi\circ g$ and get for all $x\in\mathbb{B}_\epsilon(\ox)$ and $w\in\R^n$ the equivalences
\begin{equation*}
v\in\partial\varphi(x)\iff v\in\nabla\ph_0(x)+\nabla g(x)^*\partial\psi\big(g(x)\big)\iff\Lambda(x,v)\ne\emp.
\end{equation*} 
Using Lemma \ref{robustSOQClm}, we find neighborhoods $\mathcal{X}$ of $\ox$ and $\mathcal{O}$ of $0$ such that \eqref{robustSOQC} holds. Since the strong amenability assumption on the composition $\psi\circ g$ yields its continuous prox-regularity at any point $x$ near $\ox$ by \cite[Proposition~13.32]{Rockafellar98}, this property also holds for the cost function $\ph$ in \eqref{composite}. The second-order subdifferential chain rule for strongly amenable functions taken from \cite[Corollary~3.76]{Mordukhovich06} tells us that for each $x$ near $\ox$, each $v\in\partial\varphi(x)\cap \mathcal{O}$, and each $w\in\R^n$ we have
$$
\partial^2(\psi\circ g)\big(x,v-\nabla\ph_0(x)\big)(w)\subset\bigcup_{y\in\Lambda(x,v)}\left[\nabla^2\langle y,g\rangle(x)w+\nabla g(x)^*\partial^2\psi\big(g(x), y\big)\big(\nabla g(x)w\big)\right].    
$$
Combining this inclusion with the second-order subdifferential sum rule from \cite[Proposition~1.121]{Mordukhovich06} gives us
\begin{align}\label{chainrulecomamenable}
\partial^2\varphi(x,v)(w)&=\nabla^2\ph_0(x)w+\partial^2(\psi\circ g)\big(x,v-\nabla \ph_0(x)\big)(w)\nonumber\\
&\subset\bigcup_{y\in\Lambda(x,v)}\left[\nabla^2\ph_0(x)w+\nabla^2\langle y,g\rangle (x)w+\nabla g(x)^*\partial^2\psi\big(g(x),y\big)\big(\nabla g(x)w\big)\right]
\end{align}
for all $w\in\R^n$, $x$ near $\ox$, and $v\in\partial\varphi(x)\cap\mathcal{O}$. It follows from the upper estimates in \eqref{chainrulecomamenable} that conditions \eqref{cvsufficiency} and \eqref{strsufficiency} ensure the existence of neighborhoods $U$ of $\ox$ and $V$ of $0$ on which we have \eqref{PSDvarphi1} and \eqref{PSDvarphi2}, respectively. Likewise, the pointbased condition \eqref{varsufficiency} yields \eqref{PSDvarphi3}. Applying finally Theorems~\ref{2ndconvexsub}, \ref{2ndstrongconvex} and \ref{pointPD} verifies, respectively, the fulfillment of assertions (i), (ii), and (iii) of this theorem.
\end{proof}

Note that the second-order conditions obtained in Theorem~\ref{vrcoderivativenotfullrank} imply the variational sufficiency and strong variational sufficiency for local optimality in \eqref{composite} while not characterize them. The next theorem reveals additional assumptions on the problem data that ensure {\em complete characterizations} of these properties in composite optimization. We are dealing below with {two crucial subclasses} of {\em fully amenable} compositions $\psi\circ g$ in \eqref{composite}. The first subclass is generated by extended-real-valued {\em piecewise linear} functions $\psi$ in the sense of \cite[Definition~2.47]{Rockafellar98}. The second subclass is formed by the functions $\psi\colon\R^m\to\oR$ defined as
\begin{equation}\label{enlp}
\psi(z):=\sup_{v\in P}\bigg\{\langle v,z\rangle-\frac{1}{2}\big\langle Qv,v\big\rangle\bigg\},
\end{equation}
where $P\subset\R^m$ is a nonempty polyhedral set, and where $Q\in\R^{m\times m}$ is a symmetric positive-semidefinite matrix. Functions of this type appear in problems of {\em extended linear-quadratic programming}; see \cite{Rockafellar98}. {To deal with class \eqref{enlp}}, recall that a mapping $f\colon\R^n\to\R^m$ is {\em open} around $\ox$ if for any neighborhood $U$ of $\ox$ there exists a neighborhood $V$ of $f(\ox)$ such that $V\subset f(U)$. Here are the aforementioned characterizations of {variational and strong variational sufficiency} for local optimality in \eqref{composite}, which have the same form as in Theorem~\ref{vrcoderivative} (including the uniqueness of multipliers) {without imposing the full rank assumption.}

\begin{Theorem}[\bf characterizations of variational and strong variational sufficiency in composite fully amenable optimization]\label{vs-amen} In addition to the assumptions of Theorem~{\rm\ref{vrcoderivativenotfullrank}}, suppose that 

{\bf(a)} either $\psi$ is piecewise linear,

{\bf(b)} or $\psi$ is of class \eqref{enlp}, $Q$ is positive-definite, and the inner mapping $g$ is open around $\ox$.\\[1ex] 
Then all the three characterizations {\rm(i)--(iii)} of Theorem~{\rm\ref{vrcoderivative}} hold.
\end{Theorem}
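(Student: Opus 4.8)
The plan is to reduce everything to the proof of Theorem~\ref{vrcoderivative}. Re-examining that proof, the full rank condition $\rank\nabla g(\ox)=m$ was used only at two points: (A) to conclude that the multiplier set $\Lambda(x,v)$ is a singleton $\{y\}$ for all $(x,v)$ near $(\ox,0)$ with $v\in\partial\varphi(x)$, and (B) to obtain the \emph{equality} in the second-order chain rule
\[
\partial^2\varphi(x,v)(w)=\nabla^2\ph_0(x)w+\nabla^2\langle y,g\rangle(x)w+\nabla g(x)^*\partial^2\psi\big(g(x),y\big)\big(\nabla g(x)w\big).
\]
Granting (A) and (B) under the current hypotheses, the rest of the argument of Theorem~\ref{vrcoderivative} transfers verbatim: the displayed equality converts \eqref{cvsufficiency}, \eqref{strsufficiency}, \eqref{varsufficiency} into the equivalent conditions \eqref{PSDvarphi1}, \eqref{PSDvarphi2}, \eqref{PSDvarphi3}, and Theorems~\ref{2ndconvexsub}, \ref{2ndstrongconvex}, and \ref{pointPD} then deliver the claimed characterizations. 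So I would spend the whole proof establishing (A) and (B) in each of the two settings.

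In case (b), both facts come almost for free. Positive-definiteness of $Q$ makes $\delta_P+\tfrac12\langle Q\cdot,\cdot\rangle$ strongly convex, hence its conjugate $\psi$ from \eqref{enlp} is of class $\mathcal{C}^{1,1}$. Then $\partial\psi$ is single-valued, which immediately yields (A), and $\psi\circ g$ is itself $\mathcal{C}^{1,1}$. To obtain the equality in (B) I would use the openness of $g$ around $\ox$: it forces the image $g(U)$ of every neighborhood $U$ of $\ox$ to contain a neighborhood of $\oz:=g(\ox)$, which is precisely the qualification substituting for surjectivity of $\nabla g(\ox)$ in the coderivative chain rule applied to $\nabla(\psi\circ g)=\nabla g(\cdot)^*\nabla\psi(g(\cdot))$; together with the one-sided inclusion already recorded in \eqref{chainrulecomamenable} this gives the equality.

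In case (a), the composition $\psi\circ g$ is fully amenable at $\ox$, and Lemma~\ref{robustSOQClm} makes the second-order qualification condition \eqref{SOQC} hold robustly around $\ox$. For (B) I would invoke the exact second-order chain rule for fully amenable compositions valid under \eqref{SOQC} (see \cite{mr}; cf.\ also \cite{Mordukhovich06,msar}). For (A), I would argue that for piecewise linear convex $\psi$ the set $\partial\psi(\oz)$ is polyhedral and that, at any $y$ relatively interior to a face $F$ of $\partial\psi(\oz)$, one has $\mathrm{span}(F-y)\subseteq\partial^2\psi(\oz,y)(0)$; consequently two distinct multipliers $y_1\ne y_2\in\Lambda(\ox,0)$ would put the nonzero vector $y_1-y_2$ into $\partial^2\psi(\oz,y)(0)\cap\ker\nabla g(\ox)^*$ for $y$ in the relative interior of the smallest face of $\partial\psi(\oz)$ containing $[y_1,y_2]$, contradicting \eqref{SOQC}; running the same reasoning at nearby stationary points via the robust form in Lemma~\ref{robustSOQClm} gives the required local uniqueness.

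The hard part is step (B): promoting the one-sided inclusion \eqref{chainrulecomamenable} to an equality \emph{without} the full rank assumption. This is exactly where full amenability (in case (a)) and the pairing ``$Q$ positive-definite together with $g$ open'' (in case (b)) are indispensable — the positive-definiteness of $Q$ removing the multiplier ambiguity by smoothing $\psi$, and the openness of $g$ supplying the surjectivity-type qualification that the reverse coderivative inclusion requires.
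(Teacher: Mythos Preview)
Your proposal is correct and follows essentially the same route as the paper. The paper's proof is extremely terse: it says to proceed as in Theorem~\ref{vrcoderivativenotfullrank} and observes that under (a) or (b) the inclusion \eqref{chainrulecomamenable} becomes an \emph{equality} with $\Lambda(x,v)$ a singleton, citing \cite[Theorems~4.3 and 4.5]{mr} for both facts simultaneously. You have correctly identified that these two ingredients --- your (A) and (B) --- are exactly what is missing, and that once they are in hand the argument of Theorem~\ref{vrcoderivative} transfers verbatim.

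The only difference is one of presentation. Where the paper outsources everything to \cite{mr}, you spell out the mechanisms: in case (b) you observe that positive-definite $Q$ makes $\psi$ of class ${\cal C}^{1,1}$ (whence $\partial\psi$ is single-valued, giving (A) for free), and in case (a) you supply a direct argument for multiplier uniqueness via the face structure of the polyhedral set $\partial\psi(\oz)$ together with SOQC. These elaborations are sound and make the proof more self-contained, but they are not a genuinely different strategy --- they are precisely the content of the cited results in \cite{mr} unpacked. Your framing via Theorem~\ref{vrcoderivative} rather than Theorem~\ref{vrcoderivativenotfullrank} is cosmetic, since the latter's proof already contains the one-sided inclusion you need to upgrade.
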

\begin{proof} We proceed as in the proof of Theorem~\ref{vrcoderivativenotfullrank} {by observing that} the additional assumptions imposed either in (a), or in (b) of this theorem ensure that the inclusion in \eqref{chainrulecomamenable} holds as equality with $\Lambda(x,w)$ being a singleton. This is due to the second-order subdifferential chain rules of the equality type derived under (a) and (b) in \cite[Theorem~4.3]{mr} and  \cite[Theorem~4.5]{mr}, respectively.
\end{proof}

We now compare our {\em second-order subdifferential approach} to variational and strong variational sufficiency for local optimality in composite optimization \eqref{composite} with Rockafellar's developments in the very recent paper \cite{roc} to characterize {\em strong} variational sufficiency in \eqref{composite} {via {\em quadratic bundles} defined in \eqref{qb}.}

\begin{Remark}[\bf strong variational sufficiency via augmented Lagrangians and quadratic bundles]\label{rock-vs} {\rm Rockafellar's approach \cite{roc} to variational and strong variational sufficiency for local optimality in \eqref{composite} with the l.s.c.\ convex function $\psi$    is based on exploring a certain ``hidden convexity" (of a local convex-concave type) of the corresponding {\em augmented Lagrangian function}. 
		In this way, Rockafellar established a criterion for {\em strong} variational sufficiency via \textit{Hessian bundles} of augmented Lagrangians; see \cite[Theorem~3]{roc}. Moreover, using the other type of generalized second-order derivatives called \textit{quadratic bundles}, it is shown in \cite[Theorem~5]{roc} that the condition
		\begin{equation}\label{2nd-der}
			q \in \text{\rm quad}\;\psi(g(\ox),\oy),\; w \ne 0 \Longrightarrow \frac{1}{2}\big\la\nabla^2_{xx}L(\ox,\oy),w,w\big\ra+q\big(\nabla g(\ox)w\big)>0.
		\end{equation}
		via the standard Lagrangian of \eqref{composite}, characterizes the strong variational sufficiency for local optimality in \eqref{composite}. In the case where $\psi$ is nonconvex, it is possible to derive the characterization of strong variational sufficiency via the connection to SCD mappings (cf,\ \cite[Proposition 3.33]{go22}), which is also discussed in Remark \ref{discussSCD}.  To the best of our knowledge, the characterization of the merely \textit{variational sufficiency} for local optimality in \eqref{composite} via quadratic bundles has not been done. Our second-order subdifferential approach is independent of the aforementioned approach of Rockafellar. Let us emphasize that our second-order subdifferential results in Theorems~\ref{vrcoderivative}, \ref{vrcoderivativenotfullrank}, and \ref{vs-amen} {\em completely characterize both} variational and strong variational
		sufficiency in \eqref{composite}  with and without the convexity of the function $\psi$ therein.}
\end{Remark}\vspace*{-0.22in}

\section{Applications to Nonlinear Programming}\label{vs-nlp}\vspace*{-0.05in}
\setcounter{equation}{0}

As an illustration of the general results of Section~\ref{sec:varsuf}, we provide here their direct applications to characterizing variational and strong variational sufficiency for local optimality in classical problems on nonlinear programming with ${\cal C}^2$-smooth data. The characterizations obtained below are expressed entirely in terms the problem data meaning that the second-order subdifferentials in the results of Section~\ref{sec:varsuf} are {\em explicitly calculated}. 

The conventional model of nonlinear programming (NLP) is formulated as follows:
\begin{equation}\label{NLPproblem}
\min\quad\ph_0(x)\;\text{ subject to }\;
\begin{cases}
\ph_i(x)\le 0&\text{for }\;i=1,\ldots,s,\\
\ph_i(x)=0&\text{for }\;i=s+1,\ldots,m,
\end{cases}
\end{equation}
where $\ph_i$, $i=0,\ldots,m$, are $\mathcal{C}^2$-smooth functions around the references points. Problem \eqref{NLPproblem} can be obviously written in the form of composite optimization \eqref{composite} with $\psi=\delta_\O$, where $\O$ is given by 
\begin{equation}\label{NLP}
\O:=\big\{u\in\R^m\;\big|\;u_i\le 0\;\text{ for }\;i=1,\ldots,s\;\mbox{ and }\;u_i =0\;\text{ for }\;i=s+1,\ldots,m\big\},
\end{equation}
and where $g(x):=(\ph_1(x),\ldots,\ph_m(x))$. Define the {\em Lagrangian function} $L:\R^n\times\R^m\to\R$ by 
\begin{equation}\label{Lagrangefunction}
L(x,y):= \ph_0(x)+\langle y,g(x)\rangle=\ph_0(x)+y_1\ph_1(x)+\ldots+y_m\ph_m(x)  \quad\text{for all }\;x\in\R^n,\;y\in\R^m
\end{equation}
and for each $(x,y)\in\R^n\times\R^m$ consider the subspace
\begin{equation}\label{subspaceS}
S(x,y):=\big\{w\in\R^n\;\big|\;\langle\nabla\ph_i(x),w\rangle=0\;\text{ for }\;i\in I_+(x,y)\cup\{s+1,\ldots,m\}\big\}
\end{equation}
together with the index collections 
\begin{equation}\label{index}
I_+(x,y):=\big\{i\in I(x)\;\big|\;y_i>0\big\}\;\text{ and }\;I(x):=\big\{i\in\{1,\ldots,s\}\;\big|\;\ph_i(x)=0\big\}.
\end{equation}

First-order and second-order subdifferential calculations for the indicator function of set \eqref{NLP} can be deduced from various sources and represented in different forms. Here we present a simple direct derivation and present the corresponding formulas in the explicit and convenient way used in what follows.

\begin{Lemma}[\bf explicit subdifferential calculations for NLP]\label{calcNLP2nd} Let the set $\O$ be taken from \eqref{NLP}. Then 
\begin{equation}\label{normalconeKs}
\partial\delta_\O(z)=\partial^\infty\delta_\O(z)=N_\O(z)= F_1(z_1)\times\ldots\times  F_m(z_m)\;{\text{ for all }\;z\in \Omega,}
\end{equation} 
where each multifunction $F_i\colon\R\tto\R$ is given by
\begin{equation*}
F_i(t):=\begin{cases}
[0,\infty)&\text{if}\quad t=0,\;1\le i\le s,\\
\{0\} &\text{if}\quad t<0,\;1\le i\leq s,\\
\R &\text{if}\quad t =0,\; s+1\le i\le m,\\
\emp &\text{otherwise}.
\end{cases} 
\end{equation*}
Furthermore, for every $(z,y)\in\gph\partial\delta_\O$ we have 
\begin{equation}\label{secorderOmega}
\partial^2\delta_\O(z,y)(v)=\big\{u\in\R^m\;\big|\;(u_i,-v_i)\in G_i(z_i,y_i),\;i=1,\ldots,m\big\}\;\mbox{ for all }\;v\in\R^m,
\end{equation} 
where the multifunctions $G_i\colon\R^2\tto\R^2$, $i=1.\ldots,m$, are given by
\begin{equation*}
 G_i(t,p):=\begin{cases}
(\mathbb{R}_+\times\mathbb{R}_{-})\cup\big(\mathbb{R}\times\{0\}\big)\cup\big(\{0\}\times\mathbb{R}\big)&\text{if}\quad t =0,\;p=0,\;1\le i\le s,\\
\mathbb{R}\times\{0\}&\text{if}\quad t=0,\;p>0,\;1\le i\le s,\\
\{0\}\times\mathbb{R}&\text{if}\quad t<0,\;p=0,\;1\le i\le s, \\
\mathbb{R}\times\{0\}&\text{if}\quad t=0,\;s+1\le i\le m,\\
\emptyset &\text{otherwise}.
\end{cases}  
\end{equation*} 
\end{Lemma}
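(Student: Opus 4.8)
The plan is to reduce everything to two one-dimensional model sets by exploiting the Cartesian product structure of $\O$. Write $\O=\O_1\times\cdots\times\O_m$, where $\O_i:=(-\infty,0]$ for $1\le i\le s$ and $\O_i:=\{0\}$ for $s+1\le i\le m$. Since the regular and limiting normal cones to a Cartesian product of closed sets are the Cartesian products of the normal cones to the factors (see \cite[Proposition~6.41]{Rockafellar98}), and since for an indicator function the horizon subdifferential in \eqref{MordukhovichSubdifferential} coincides with the limiting normal cone (the scaling $\lambda\widehat{N}_\O(x)=\widehat{N}_\O(x)$ being inert because $\widehat{N}_\O(x)$ is a cone), one gets $\widehat{N}_\O(z)=N_\O(z)=\partial^\infty\delta_\O(z)=\prod_{i=1}^m N_{\O_i}(z_i)$ for every $z\in\O$. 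Moreover $\O$ is polyhedral convex, so $\delta_\O$ is convex and $\partial\delta_\O=\widehat\partial\delta_\O=N_\O$. It then remains only to recall the elementary facts $N_{(-\infty,0]}(0)=[0,\infty)$, $N_{(-\infty,0]}(t)=\{0\}$ for $t<0$, and $N_{\{0\}}(0)=\R$, while $N_{\O_i}(z_i)=\emp$ whenever $z_i\notin\O_i$; collecting these coordinatewise yields exactly the maps $F_i$ and proves \eqref{normalconeKs}.

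For the second-order formula I would start from the definition $\partial^2\delta_\O(z,y)(v)=D^*N_\O(z,y)(v)=\{u\in\R^m\mid(u,-v)\in N_{\gph N_\O}(z,y)\}$. Because $N_\O$ acts coordinatewise, $\gph N_\O$ is, after the obvious permutation of the $2m$ coordinates, the Cartesian product $\prod_{i=1}^m\gph N_{\O_i}$; applying once more the product rule for limiting normal cones shows that $(u,-v)\in N_{\gph N_\O}(z,y)$ if and only if $(u_i,-v_i)\in N_{\gph N_{\O_i}}(z_i,y_i)$ for every $i=1,\dots,m$. Consequently \eqref{secorderOmega} holds with $G_i(t,p):=N_{\gph N_{\O_i}}(t,p)$, and the task is reduced to computing the limiting normal cone to the graph of each one-dimensional normal-cone map. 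Note that for a graph point $(z,y)\in\gph\partial\delta_\O$ one automatically has $(z_i,y_i)\in\gph N_{\O_i}$ for all $i$, so the ``otherwise'' line of $G_i$ is included only for consistency of the formula.

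These two computations are then carried out directly. If $\O_i=\{0\}$, then $\gph N_{\{0\}}=\{0\}\times\R$ is a line in $\R^2$, whose (regular, hence limiting) normal cone at every point is $\R\times\{0\}$; this gives the fourth line of $G_i$, and the ``otherwise'' case produces $\emp$ because then $(t,p)\notin\gph N_{\{0\}}$. If $\O_i=(-\infty,0]$, then $\gph N_{(-\infty,0]}=\big((-\infty,0]\times\{0\}\big)\cup\big(\{0\}\times[0,\infty)\big)$ is the ``L-shaped'' complementarity set: at a point $(t,0)$ with $t<0$ it coincides locally with the horizontal axis, so the normal cone is $\{0\}\times\R$; at $(0,p)$ with $p>0$ it coincides locally with the vertical axis, so the normal cone is $\R\times\{0\}$; and at the corner $(0,0)$ the regular normal cone equals $\mathbb{R}_+\times\mathbb{R}_-$, whereas the outer limit of regular normals along the two rays contributes the extra pieces $\{0\}\times\R$ and $\R\times\{0\}$, so the limiting normal cone is $(\mathbb{R}_+\times\mathbb{R}_-)\cup(\mathbb{R}\times\{0\})\cup(\{0\}\times\mathbb{R})$. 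These three values are precisely the first three lines of $G_i$.

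The only genuinely delicate step is the evaluation of the limiting normal cone to the L-shaped graph $\gph N_{(-\infty,0]}$ at its corner $(0,0)$: one must compute separately the regular normal cone there and the outer limit in \eqref{OuterLimit} of the regular normal cones at nearby points of each of the two branches, and then take the union. Alternatively, one may simply invoke the classical description of the limiting normal cone to the complementarity set $\{(t,p)\mid t\le 0,\ p\ge 0,\ tp=0\}$. Everything else is routine bookkeeping with the product rules for normal cones.
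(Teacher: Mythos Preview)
Your proof is correct and follows essentially the same strategy as the paper: reduce to the one-dimensional factors via the product structure, identify $F_i$ with $N_{\O_i}$, and then compute $G_i=N_{\gph N_{\O_i}}$ coordinatewise. The only cosmetic difference is that the paper obtains the second-order product decomposition by citing \cite[Theorem~4.3]{BorisOutrata}, whereas you derive it directly from the permutation $\gph N_\O\cong\prod_i\gph N_{\O_i}$ and the product rule for limiting normal cones; your version is a bit more self-contained, and your explicit treatment of the corner $(0,0)$ of the L-shaped set fills in a step the paper leaves to the reader.
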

\begin{proof} Recall first by \cite[Proposition~1.79]{Mordukhovich06} that
$$
\partial\delta_\O(z)=\partial^\infty\delta_\O(z)=N_\O(z)\quad\text{for all }\;z\in\O.
$$
It follows from $\O=\Omega_1\times\ldots\times\Omega_m$ with
$$
\Omega_i:=\begin{cases}
(-\infty,0]&\text{if }\quad i=1,\ldots,s,\\
\{0\}&\text{if }\quad i=s+1,\ldots,m
\end{cases}
$$
that we have the representation
$$
\delta_\O(z)=\delta_{\Omega_1}(z_1)+\ldots+\delta_{\Omega_m}(z_m)\quad\text{for all }\;z=(z_1,\ldots,z_m)\in\R^m.
$$
This easily leads us to the subdifferential expression
\begin{equation}\label{sepindicator}
\partial\delta_\O(z)=\partial\delta_{\Omega_1}(z_1)\times\ldots\times \partial\delta_{\Omega_m}(z_m)\quad\text{for all }\;z\in\O.  
\end{equation} 
Since $\Omega_i$ is convex for any $i\in\{1,\ldots,m\}$, it is not difficult to check that 
\begin{equation}\label{norOmegai}
\partial\delta_{\Omega_i}(t)=N_{\Omega_i}(t)=F_i(t)\quad\text{whenever }\;t\in\R,\;i =1,\ldots,m. 
\end{equation} 
Combining \eqref{sepindicator} and \eqref{norOmegai} justifies the first-order formulas in \eqref{normalconeKs}. 

To verify finally the second-order subdifferential formula \eqref{secorderOmega}, observe that $N_{{\rm\small gph}\,\partial\delta_{\Omega_i}}=G_i\quad\text{for all }\;i=1,\ldots,m$. This allows us to deduce from \cite[Theorem~4.3]{BorisOutrata} the representation
\begin{equation*}
\partial^2\delta_\O(z,y)(v)=\big\{u\in\R^m\;\big|\;\big(u_i,-v_i\big)\in N_{{\rm\small gph}\,\partial\delta_{\Omega_i}}(z_i,y_i),\;i=1,\ldots,m\big\},
\end{equation*}
which therefore justifies the fulfillment of \eqref{secorderOmega} and completes the proof of the lemma.
\end{proof}

Thanks to Lemma~\ref{calcNLP2nd}, we have the following explicit formula for the set of multipliers \eqref{Lagrangemultiplier} in problem \eqref{NLPproblem}:
\begin{equation}\label{Lmulti2}
\Lambda(x,v)=\big\{y\in\R_+^s\times\R^{m-s}\;\big|\;v=\nabla_x L(x,y),\;\langle y, g(x)\rangle=0\big\}
\end{equation} 
with $g=(\ph_1,\ldots,\ph_m)$ and the Lagrangian function $L$ defined in \eqref{Lagrangefunction}.\vspace*{0.05in}

Next we recall the two well-known constraint qualification conditions in NLP. Given a feasible solution $\ox$ to \eqref{NLPproblem}, it is said that the {\em linear independent constraint qualification} (LICQ) holds at $\ox$ of \eqref{NLPproblem} if the vectors
\begin{equation}\label{linearICQ}
\nabla\ph_i(\ox)\;\mbox{ for }\;i\in I(\ox)\;\text{and }\;\nabla\ph_i(\ox)\;\mbox{ for }\;i=s+1,\ldots,m\;\text{ are linearly independent},
\end{equation} 
where $I(\ox)$ stands for the collection of active inequality constraint indices at $\ox$ taken from \eqref{index}. We say that the {\em positive linear independence constraint qualification} (PLICQ) is satisfied at $\ox$ if 
\begin{equation}\label{linearPICQ}
 {\bigg[\disp\sum_{i\in I(\ox)\cup\{s+1,\ldots,m\}}\al_i\nabla\ph_i(\ox)=0,\;\al_i\ge 0,\;i \in I(\ox)  \bigg] \Longrightarrow\big[\al_i=0\;\mbox{ for all }\; i \in I(\ox)\cup \{s+1,\ldots,m\} \big].}
\end{equation} 
Note that both of these constraint qualifications are robust with respect to small perturbations of $\ox$. In fact, \eqref{linearPICQ} is a dual form of the {\em Mangasarian-Fromovitz constraint qualification} (MFCQ) at $\ox$.\vspace*{0.05in}

By using Lemma~\ref{calcNLP2nd}, we show now that the first-order qualification condition \eqref{1stqualify} and the second-order qualification condition \eqref{SOQC} are equivalent to PLICQ and LICQ at $\ox$, respectively,

\begin{Lemma}[\bf equivalent descriptions of first- and second-order qualification conditions]\label{lemma:cq} Let $\ox$ be a feasible solution for NLP \eqref{NLPproblem} written in the framework of composite optimization \eqref{composite} with
\begin{equation}\label{nlp}
\psi:=\dd_\O\;\mbox{ and }\;g:=(\ph_1,\ldots,\ph_m),
\end{equation}
where $\O$ is taken from \eqref{NLP}. Then the following assertions hold:

{\bf(i)} We have the representation
\begin{equation}\label{explicit1st}
\partial^\infty\psi\big(g(\ox)\big)\cap\ker\nabla g(\ox)^*=\left\{u\in\R^m\;\bigg|\; \sum_{i\in J}u_i\nabla\ph_i(\ox)=0,\;u_i\ge 0\;\forall  {i \in I(\ox)}\;\mbox{ and }\;u_i= 0 \;\forall i\notin J\right\},
\end{equation}
where $J:=I(\ox)\cup\{s+1,\ldots,m\}$. Consequently, the first-order qualification condition \eqref{1stqualify} at $\ox$ is equivalent to the fulfillment of PLICQ \eqref{linearPICQ} at this point.

{\bf(ii)} Whenever $\oy\in\partial\psi(g(\ox))$, we have the representation
\begin{equation}\label{explicitSOQC}
\partial^2\psi\big(g(\ox),\oy\big)(0)\cap\ker\nabla g(\ox)^*=\left\{u\in\R^m\;\bigg|\;\sum_{i\in J}u_i\nabla\ph_i(\ox)=0,\;u_i=0\;\text{ for all }\;i\notin J\right\}
\end{equation}
with $J$ taken from {\rm(i)}. Consequently, the second-order qualification condition \eqref{SOQC} is equivalent to the fulfillment of LICQ \eqref{linearICQ} at this point.
\end{Lemma}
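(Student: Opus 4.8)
The plan is to reduce both parts to the explicit first- and second-order subdifferential formulas for $\delta_\O$ recorded in Lemma~\ref{calcNLP2nd}, after observing that $\nabla g(\ox)^*u=\sum_{i=1}^m u_i\nabla\ph_i(\ox)$ since the $i$-th row of $\nabla g(\ox)$ is $\nabla\ph_i(\ox)^\top$; hence $\ker\nabla g(\ox)^*=\{u\in\R^m\mid\sum_{i=1}^m u_i\nabla\ph_i(\ox)=0\}$. Throughout write $\oz:=g(\ox)$, so that $\oz_i=\ph_i(\ox)$ equals $0$ exactly when $i\in I(\ox)$ (for $1\le i\le s$) or $i\ge s+1$, while $\oz_i<0$ for $i\in\{1,\dots,s\}\setminus I(\ox)$; abbreviate $J:=I(\ox)\cup\{s+1,\dots,m\}$, whose complement in $\{1,\dots,m\}$ is $\{1,\dots,s\}\setminus I(\ox)$.

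For assertion (i), I would evaluate $\partial^\infty\psi(\oz)=F_1(\oz_1)\times\dots\times F_m(\oz_m)$ from \eqref{normalconeKs}. Reading off the definition of $F_i$ at $\oz$ gives $F_i(\oz_i)=[0,\infty)$ for $i\in I(\ox)$, $F_i(\oz_i)=\{0\}$ for $i\in\{1,\dots,s\}\setminus I(\ox)$, and $F_i(\oz_i)=\R$ for $i\ge s+1$. Thus $u\in\partial^\infty\psi(\oz)$ precisely when $u_i\ge 0$ for $i\in I(\ox)$ and $u_i=0$ for $i\in\{1,\dots,s\}\setminus I(\ox)$. Intersecting with $\ker\nabla g(\ox)^*$ forces $u_i=0$ for $i\notin J$, so the constraint $\sum_{i=1}^m u_i\nabla\ph_i(\ox)=0$ collapses to $\sum_{i\in J}u_i\nabla\ph_i(\ox)=0$, and \eqref{explicit1st} follows. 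The condition \eqref{1stqualify} says this set is $\{0\}$, which is verbatim the implication in \eqref{linearPICQ}; hence \eqref{1stqualify} is equivalent to PLICQ at $\ox$.

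For assertion (ii), fix $\oy\in\partial\psi(\oz)$, so by the first-order formula $\oy_i\ge 0$ for $i\in I(\ox)$, $\oy_i=0$ for $i\in\{1,\dots,s\}\setminus I(\ox)$, and $\oy_i\in\R$ for $i\ge s+1$. I would then substitute $v=0$ into \eqref{secorderOmega}: $u\in\partial^2\psi(\oz,\oy)(0)$ iff $(u_i,0)\in G_i(\oz_i,\oy_i)$ for every $i$. Running through the cases of $G_i$: for $i\in I(\ox)$ with $\oy_i=0$ we use $G_i(0,0)\supseteq\R\times\{0\}$, so $u_i$ is unconstrained; for $i\in I(\ox)$ with $\oy_i>0$ we are in the case $G_i(0,\oy_i)=\R\times\{0\}$, again $u_i\in\R$; for $i\in\{1,\dots,s\}\setminus I(\ox)$ we are in the case $G_i(\oz_i,0)=\{0\}\times\R$, forcing $u_i=0$; and for $i\ge s+1$ the case $G_i(0,\oy_i)=\R\times\{0\}$ gives $u_i\in\R$. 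Hence $\partial^2\psi(\oz,\oy)(0)=\{u\in\R^m\mid u_i=0\text{ for }i\notin J\}$, with no sign constraint surviving. Intersecting with $\ker\nabla g(\ox)^*$ reproduces \eqref{explicitSOQC} exactly as in part (i), and \eqref{SOQC} being $\{0\}$ is precisely linear independence of $\{\nabla\ph_i(\ox)\mid i\in J\}$, i.e.\ LICQ \eqref{linearICQ}.

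The only delicate point is the case bookkeeping for $G_i$ at $v=0$: one must notice that the second-coordinate-zero slice of $G_i(0,0)$ is all of $\R$ (because $\R\times\{0\}$ is one of the three constituent pieces), so an active index carrying a zero multiplier contributes an \emph{unconstrained} coordinate rather than a one-sided one. This is exactly what makes the second-order qualification condition collapse to plain linear independence (LICQ), in contrast to the one-sided positive linear independence (PLICQ) produced by the first-order condition. Everything else is a direct substitution into Lemma~\ref{calcNLP2nd}.
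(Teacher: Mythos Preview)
Your proof is correct and follows essentially the same approach as the paper: both reduce everything to the componentwise formulas of Lemma~\ref{calcNLP2nd}, read off the product description of $\partial^\infty\psi(\oz)$ and of $\partial^2\psi(\oz,\oy)(0)$, and intersect with $\ker\nabla g(\ox)^*=\{u\mid\sum_i u_i\nabla\ph_i(\ox)=0\}$. Your write-up is in fact a bit more explicit than the paper's in walking through the $G_i$ cases at $v=0$, and your closing remark correctly pinpoints why the $\R\times\{0\}$ branch of $G_i(0,0)$ is what turns the sign-constrained PLICQ into the full LICQ.
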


\begin{proof} To verify assertion (i), deduce from Lemma~\ref{calcNLP2nd} that
\begin{equation}\label{uin1stsub}
u\in\partial^\infty\psi\big(g(\ox)\big)\iff u_i\ge 0\quad\text{for all }\; {i\in I(\ox)}\quad\text{and }\;u_i=0\quad\text{for all }\;i\notin J.
\end{equation} 
Furthermore, it follows from the definitions that
\begin{equation}\label{uinker1}
u\in\ker\nabla g(\ox)^*\iff\nabla g(\ox)^*u=0\iff  {\sum_{i=1}^m u_i\nabla\varphi_i(\ox)=0.}
\end{equation}
Combining \eqref{uin1stsub} and \eqref{uinker1} justifies \eqref{explicit1st}, which yields the claimed equivalence between \eqref{1stqualify} and \eqref{linearPICQ}.

To proceed with the verification of (ii), pick $\oy\in\partial\psi(g(\ox))$ and get from Lemma~\ref{calcNLP2nd} that
\begin{equation*}
u\in\partial^2\psi\big(g(\ox),\oy\big)(0)\iff u_i=0\quad\text{for all }\;i\notin J.
\end{equation*} 
Using this together with \eqref{uinker1} gives us \eqref{explicitSOQC}, which immediately implies the claimed equivalence in (ii). 
\end{proof}

Reformulating the NLP problem \eqref{NLPproblem} in the form of composite optimization \eqref{composite} with $\psi$ and $g$ taken from \eqref{nlp}, observe that the stationary condition $0\in\partial\ph(\ox)$ can be equivalently written as the {\em KKT system}
\begin{equation}\label{1stKKT}
\nabla_x L(\ox,\oy)=0\quad\text{and }\;\langle\oy,g(\ox)\rangle=0\quad\text{for some }\;\bar{y}\in\R_+^s\times\R^{m-s}
\end{equation}
provided that the first-order qualification condition \eqref{1stqualify}, i.e., PLICQ in our case, holds. Indeed, it follows from the subdifferential sum and chain rules taken from \cite[Proposition~1.107(ii) and  Theorem~3.41(ii)]{Mordukhovich06}, respectively, with the usage of Lemma~\ref{calcNLP2nd} ensuring representation \eqref{Lmulti2}.\vspace*{0.05in}

The next theorem establishes second-order characterizations of variational and strong variational sufficiency for local optimality in nonlinear programming entirely in terms of the given data of \eqref{NLPproblem}.

\begin{Theorem}[\bf variational and strong variational sufficiency for local optimality in NLP]\label{vrNLPsufficiency} Let $\ox$ be a feasible solution
to the NLP problem \eqref{NLPproblem} satisfying the first-order optimality condition \eqref{1stKKT} under the fulfillment of LICQ at $\ox$, and let $S(x,y)$ be defined in \eqref{subspaceS}. Then we have the following assertions:

{\bf(i)} The variational sufficiency for local optimality in \eqref{NLPproblem} holds at $\ox$ if and only if there exist neighborhoods $U$ of $\ox$, $V$ of $0$ such that 
\begin{equation}\label{PSDNLP}
\langle\nabla^2_{xx}L(x,y)w,w\rangle\ge 0\quad\text{whenever }\;x \in U,\;v\in V,\;\mbox{ and }\;w\in S(x,y),
\end{equation}
where $y\in\R^s_+\times\R^{m-s}$ is a unique solution to the system $\nabla_x L(x,y)=v$ and $\la y,g(x)\ra=0$ with $g$ from \eqref{nlp}.

{\bf(ii)} The strong variational sufficiency for local optimality in \eqref{NLPproblem} holds at $\ox$ with modulus $\sigma>0$ if and only if there exist neighborhoods $U$ of $\ox$ and $V$ of $0$ such that 
\begin{equation}\label{PDNLP}
\langle\nabla^2_{xx}L(x,y)w,w\rangle\ge\sigma\|w\|^2\quad\text{whenever }\;x\in U,\;v\in V,\;\mbox{ and }\;w\in S(x,y)
\end{equation}
with the unique vector $y\in\R^s_+\times\R^{m-s}$ defined as in {\rm(i)}.

{\bf(iii)} The strong variational sufficiency for local optimality in \eqref{NLPproblem} holds at $\ox$ if and only if 
\begin{equation}\label{pointPDNLP}
\langle\nabla^2_{xx}L(\ox,\oy)w,w\rangle>0\quad\text{whenever }\;\oy\in\Lambda(\ox,0)\;\mbox{ and }\;w\in S(\ox,\oy)\setminus\{0\},
\end{equation} 
where $\oy$ is a unique solution to the KKT system \eqref{1stKKT}.
\end{Theorem}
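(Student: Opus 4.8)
The plan is to obtain this result by specializing Theorem~\ref{vs-amen} to the composite reformulation \eqref{composite} of \eqref{NLPproblem} with $\psi=\dd_\O$ and $g=(\ph_1,\dots,\ph_m)$ as in \eqref{nlp}, and then converting the abstract conditions \eqref{cvsufficiency}, \eqref{strsufficiency}, \eqref{varsufficiency} into the explicit form \eqref{PSDNLP}, \eqref{PDNLP}, \eqref{pointPDNLP} by means of the calculus formulas in Lemma~\ref{calcNLP2nd}. First I would verify all the hypotheses needed to invoke Theorem~\ref{vs-amen}(a): the function $\psi=\dd_\O$ is proper, l.s.c., convex, and \emph{piecewise linear} because $\O$ in \eqref{NLP} is polyhedral, and it is continuously prox-regular at every point of $\O$ (so both the subdifferential-continuity hypothesis and the prox-regularity clauses appearing in Theorem~\ref{vrcoderivative} are automatic here); the inner map $g$ is $\mathcal{C}^2$-smooth; by Lemma~\ref{lemma:cq}(ii) the assumed LICQ \eqref{linearICQ} at $\ox$ is equivalent to the second-order qualification condition \eqref{SOQC}, which in turn forces the first-order qualification condition \eqref{1stqualify} via Proposition~\ref{2ndqualifyimplies1st}; hence $\psi\circ g$ is strongly (indeed fully) amenable at $\ox$, the cost $\ph$ in \eqref{composite} is continuously prox-regular, and the stationarity $0\in\partial\ph(\ox)$ is exactly the KKT system \eqref{1stKKT}. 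Since LICQ is robust (near $\ox$ the active-index set $I(x)$ only shrinks and a fixed finite family of linearly independent $\mathcal{C}^1$ gradients stays independent), I would shrink the neighborhood of $\ox$ so that LICQ, and thus \eqref{SOQC}, persist, making $\Lm(x,v)$ from \eqref{Lmulti2} a singleton there. Then Theorem~\ref{vs-amen}(a) delivers the three characterizations \eqref{cvsufficiency}--\eqref{varsufficiency} of Theorem~\ref{vrcoderivative}, and only their explicit reformulation remains.

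The heart of the argument is the following case-by-case computation with \eqref{secorderOmega}. Put $z:=g(x)$, take $y\in\Lm(x,v)$ (so $y\in\partial\psi(z)$), and set $\xi:=\nabla g(x)w$, so that $z_i=\ph_i(x)$ and $\xi_i=\la\nabla\ph_i(x),w\ra$. I claim that: (a) $\partial^2\dd_\O(z,y)(\xi)\neq\emp$ forces $\xi_i=0$ for all $i\in I_+(x,y)\cup\{s+1,\dots,m\}$, i.e.\ $w\in S(x,y)$; (b) $0\in\partial^2\dd_\O(z,y)(\xi)$ whenever $w\in S(x,y)$; and (c) $\la u,\nabla g(x)w\ra=\sum_i u_i\xi_i\ge 0$ for every $u\in\partial^2\dd_\O(z,y)(\xi)$. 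These follow by running through the five alternatives for $G_i(z_i,y_i)$: for an inactive index ($z_i<0$, hence $y_i=0$) one has $G_i=\{0\}\times\R$, so $u_i=0$; for an equality index ($i>s$) or an active index with $y_i>0$ one has $G_i=\R\times\{0\}$, forcing $\xi_i=0$ and giving $u_i\xi_i=0$; and for an active index with $y_i=0$ one has $G_i=(\R_+\times\R_-)\cup(\R\times\{0\})\cup(\{0\}\times\R)$, in each branch of which $u_i\xi_i\ge0$ and which always contains $(0,-\xi_i)$. Combining this with $\la\nabla^2\ph_0(x)w,w\ra+\la\nabla^2\la y,g\ra(x)w,w\ra=\la\nabla^2_{xx}L(x,y)w,w\ra$ from \eqref{Lagrangefunction} shows that the left-hand side of \eqref{cvsufficiency} (respectively \eqref{strsufficiency}, \eqref{varsufficiency}) is $\ge\la\nabla^2_{xx}L(x,y)w,w\ra$, with the value $\la\nabla^2_{xx}L(x,y)w,w\ra$ itself attained by the admissible choice $u=0$ exactly when $w\in S(x,y)$, and that outside $S(x,y)$ the relevant second-order subdifferential is empty, so the quantifier ``$w\in\R^n$'' contributes nothing there.

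Assembling these facts would yield each equivalence: in the necessity direction, feed $u=0$ into \eqref{cvsufficiency}/\eqref{strsufficiency}/\eqref{varsufficiency} restricted to $w\in S(x,y)$ (respectively $w\in S(\ox,\oy)\setminus\{0\}$ at the reference point) to recover \eqref{PSDNLP}/\eqref{PDNLP}/\eqref{pointPDNLP}; in the sufficiency direction, observe that any admissible $u$ forces $w\in S(x,y)$ and contributes a nonnegative term, so the bound on $\la\nabla^2_{xx}L(x,y)w,w\ra$ propagates back to the abstract conditions. The uniqueness of the multiplier $y$ (respectively $\oy$) in \eqref{PSDNLP}, \eqref{PDNLP}, \eqref{pointPDNLP} is precisely the singleton property of $\Lm$ near $(\ox,0)$ guaranteed by LICQ. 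I expect the main obstacle to be purely organizational: bookkeeping the five cases in $G_i$ while correctly tracking active versus inactive indices and the sign of each multiplier component, and in particular making sure that the universally quantified $w\in\R^n$ in \eqref{cvsufficiency}--\eqref{strsufficiency} collapses to $w\in S(x,y)$ because $\partial^2\dd_\O(g(x),y)(\nabla g(x)w)=\emp$ otherwise, so that the neighborhood conditions reduce exactly to \eqref{PSDNLP} and \eqref{PDNLP}.
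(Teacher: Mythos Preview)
Your proposal is correct and follows essentially the same route as the paper: specialize the composite characterizations to $\psi=\dd_\O$, $g=(\ph_1,\dots,\ph_m)$, invoke Lemma~\ref{lemma:cq} to identify LICQ with \eqref{SOQC}, and then translate \eqref{cvsufficiency}--\eqref{varsufficiency} into \eqref{PSDNLP}--\eqref{pointPDNLP} via the coordinate-wise description of $\partial^2\dd_\O$ in Lemma~\ref{calcNLP2nd}. Your three claims (a)--(c) are exactly the paper's bridge \eqref{explicit2ndeltaK}--\eqref{uFxw}, and your choice $u=0$ for the necessity direction is a clean special case of the paper's choice $u_i=0$ for $i\in\{1,\dots,s\}\setminus I_+(x,y)$.

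One small point worth noting: the paper also asserts that ``LICQ corresponds to the full rank assumption for $g$'' and thereby invokes Theorem~\ref{vrcoderivative}; you instead go straight through Theorem~\ref{vs-amen}(a). Your route is the safer one, since LICQ at $\ox$ does \emph{not} in general force $\rank\nabla g(\ox)=m$ (inactive inequality gradients may be dependent), and piecewise linearity of $\dd_\O$ is what actually recovers the equality form of the chain rule needed for the ``only if'' parts. Your observation that $\partial^2\dd_\O(g(x),y)(\nabla g(x)w)=\emp$ for $w\notin S(x,y)$, so that the quantifier $w\in\R^n$ collapses to $w\in S(x,y)$, is also a tidy way to handle the necessity direction that the paper leaves more implicit.
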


\begin{proof}  By Lemma~\ref{lemma:cq}(ii), the fulfillment of LICQ at $\ox$ is equivalent to the second-order qualification condition \eqref{robustSOQC}, and thus \eqref{1stqualify} is satisfied by Proposition~\ref{2ndqualifyimplies1st} with $\psi=\dd_\O$. This also follows, in the NLP case, from the PLICQ description of \eqref{1stqualify} in Lemma~\ref{lemma:cq}(i). As discussed above, condition \eqref{1stKKT} is equivalent to the stationary $0\in\partial\ph(\ox)$ of $\ox$ for 
$\varphi=\ph_0+\dd_\O\circ g$ with $\O$ from \eqref{NLP} and $g=(\ph_1,\ldots,\ph_m)$. 

It is easy to see that the imposed LICQ corresponds to the full rank assumption for $g$ from Theorem~\ref{vrcoderivative}, and thus we can apply the variational and strong variational sufficiency characterizations of that theorem for the composite problem \eqref{composite} to the case of NLP \eqref{NLPproblem}. Note that the convex l.s.c.\ function $\psi=\dd_\O$ for $\O$ from \eqref{NLP} is automatically continuously prox-regular. Observe also that this function is even piecewise linear, and thus we can equally apply Theorem~\ref{vs-amen}(a) to the NLP setting \eqref{NLPproblem}. It remains to express the general variational and strong variational sufficiency characterization in composite optimization explicitly in terms of the NLP data.\vspace*{0.03in}

To proceed, we employ Lemma~\ref{calcNLP2nd}, which tells us that the inclusion $ {u \in\partial^2\delta_\O(g(x),y)(\nabla g(x)w)}$ for each $u\in\R^m$, $w\in\R^n$, $x\in \R^n$, and $y\in\partial\delta_\O(g(x))$ amounts to saying that
\begin{equation}\label{explicit2ndeltaK}
\begin{cases}
u_i=0&\text{if }\;i\in\big\{1,\ldots,s\big\}\setminus I(x),\\
\langle\nabla\ph_i(x),w\rangle=0&\text{if }\;i\in\big\{s+1,\ldots,m\big\}\cup I_+(x,y),\\
\langle\nabla\ph_i(x),w\rangle\ge 0,\;u_i\ge 0& \text{if }\;i\in I(x)\setminus I_+(x,y). 
\end{cases}
\end{equation} 

First we verify that the conditions in \eqref{PSDNLP}, \eqref{PDNLP}, and \eqref{pointPDNLP} yield the variational and strong variational sufficiency with and without modulus $\sigma>0$ for local optimality in \eqref{NLPproblem}, respectively. To this end, suppose that \eqref{PSDNLP} holds and pick any $x\in U,\;v\in V,\;w\in\R^n$, and $u\in\partial^2\psi(g(x),y)(\nabla g(x)w)$ in the setting of \eqref{nlp} with the unique solution $y$ of the corresponding KKT system. We aim to show that 
\begin{equation}\label{PSDsufficiencyNLP}
\langle\nabla^2\ph_0(x)w,w\rangle+\langle\nabla^2\langle y,g\rangle(x)w,w\rangle+\langle u,\nabla g(x)w\rangle\ge 0. 
\end{equation}
Indeed, it follows from \eqref{explicit2ndeltaK} that
\begin{equation}\label{uFxw}
u_i\langle\nabla\ph_i(x),w\rangle\ge 0\quad\text{ for all }\;i=1,\ldots,m\;\mbox{ and }\;w\in S(x,y),
\end{equation}
and thus we deduce from \eqref{PSDNLP} that $\langle\nabla_{xx}^2 L(x,y)w,w\rangle\ge 0$. Combining the latter with \eqref{uFxw} gives us \eqref{PSDsufficiencyNLP}, and therefore the claimed variational sufficiency for local optimality in \eqref{NLPproblem} holds by Theorem~\ref{vrcoderivativenotfullrank}. Arguing similarly to the arguments above, we verify that the strong variational sufficiency for local optimality in \eqref{NLPproblem} with and without modulus $\sigma>0$ is also satisfied under the conditions in \eqref{PDNLP} and \eqref{pointPDNLP}, respectively. \vspace*{0.03in}

Next we prove the converse, i.e., that the variational and strong variational sufficient conditions with and without modulus $\sigma>0$ for local optimality in \eqref{NLPproblem} ensure the fulfillment of the conditions in \eqref{PSDNLP}, \eqref{PDNLP}, and \eqref{pointPDNLP}, respectively. To proceed with the case of variational sufficiency in (i), we get by this property for \eqref{NLPproblem} at $\ox$ that there exist neighborhoods $U$ of $\ox$ and $V$ of $0$ such that \eqref{cvsufficiency} holds {due to Theorem~\ref{vs-amen}(a)}. Taking any $x\in U,\;v\in V$, and $w\in S(x,y)$ with the unique solution $y$ of the corresponding KKT system, let us verify the fulfillment of \eqref{PSDNLP}. Indeed, choose $u\in\R^m$ such that $u_i=0$ for any $i\in\{1,\ldots,s\}\setminus I_+(x,y)$. It follows from \eqref{explicit2ndeltaK} that $u\in\partial^2\psi(g(x),y)(\nabla g(x)w)$ for $\psi$ and $g$ in \eqref{nlp}. Moreover, the choice of $u$ implies that
\begin{equation}\label{firstsum}
\sum_{i\in\{1,\ldots,s\}\setminus I_+(x,y)}u_i\langle\nabla\ph_i(x),w\rangle=0.
\end{equation} 
Since $w\in S(x,y)$, we readily get that
\begin{equation}\label{secondsum}
\sum_{i\in\{s+1,\ldots,m\}\cup I_+(x,y)}u_i\langle\nabla\ph_i(x),w\rangle=0. 
\end{equation}
Combining \eqref{firstsum} and \eqref{secondsum} tells us that
$$
\langle u,\nabla g(x)w\rangle=\sum_{i=1}^m u_i\langle\nabla\ph_i(x),w\rangle=0.
$$
Using the latter together with \eqref{cvsufficiency} leads us to 
$$
\langle\nabla^2_{xx} L(x,y)w,w\rangle=\langle\nabla^2\ph_0(x)w,w\rangle+\langle \nabla^2\langle y,g\rangle(x)w,w\rangle\ge 0,
$$
which justifies \eqref{PSDNLP}. The verifications for (ii) and (iii) are similar, and thus we are done with the proof.
\end{proof}

As an illustration of the obtained characterizations of variational and strong variational sufficiency, we now recover some known optimality conditions for nonlinear programs obtained before by using different approaches.  In this way, we also establish a {\em new condition} discussed in Remark~\ref{rem:opt}(i) below.

\begin{Corollary}[\bf second-order optimality conditions from variational sufficiency in NLP]\label{opt-nlp} Let $\ox$ be a feasible solution to NLP \eqref{NLPproblem} satisfying the stationary condition \eqref{1stKKT} under the fulfillment of LICQ at $\ox$. Then we have the following assertions:

{\bf(i)} $\ox$ is a local minimizer of \eqref{NLPproblem} if there exist neighborhoods $U$ of $\ox$ and $V$ of $0$ such that 
\begin{equation}\label{PSDlocalminimizer}
\langle\nabla^2_{xx} L(x,y)w,w\rangle\ge 0\quad\text{for all }\;x\in U,\;v\in V,\;\mbox{ and }\;w\in S(x,y),
\end{equation}
where $y\in\R^s_+\times\R^{m-s}$ is as in {\rm(i)} of Theorem~{\rm\ref{vrNLPsufficiency}}.
    
{\bf(ii)} $\ox$ is a tilt-stable local minimizer of \eqref{NLPproblem} with modulus $\kappa>0$ if and only if there exist neighborhoods $U$ of $\ox$ and $V$ of $0$ such that 
\begin{equation}\label{PDNLPtilt}
\langle\nabla^2_{xx}L(x,y)w,w\rangle\ge\frac{1}{\kappa}\|w\|^2\quad\text{for all }\; x\in U,\;v \in V,\;\mbox{ and }\;w\in S(x,y),
\end{equation}
where $y\in\R^s_+\times\R^{m-s}$ is taken from {\rm(i)}.

{\bf(iii)} $\ox$ is a tilt-stable local minimizer of \eqref{NLPproblem} if and only if 
\begin{equation}\label{pointPDtilt}
\langle\nabla^2_{xx}L(\ox,\oy)w,w\rangle>0\quad\text{for all }\;w\in S(\ox,\oy),  \end{equation}
where $\oy$ is a unique solution to the KKT system \eqref{1stKKT}.
\end{Corollary}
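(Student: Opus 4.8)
The plan is to obtain all three assertions directly from Theorem~\ref{vrNLPsufficiency} together with the two bridges recorded earlier: Remark~\ref{sttilt}, relating variational (strong) convexity to (tilt-stable) local minimizers, and Proposition~\ref{equitiltstr}, which upgrades these relations to \emph{equivalences} for continuously prox-regular functions. Two preparatory observations are needed. First, writing the NLP \eqref{NLPproblem} in the composite form \eqref{composite} with $\psi=\dd_\O$ and $g=(\ph_1,\ldots,\ph_m)$ as in \eqref{nlp}, the cost $\varphi=\ph_0+\dd_\O\circ g$ is continuously prox-regular at $\ox$ for $0\in\partial\varphi(\ox)$: under LICQ the Jacobian $\nabla g(\ox)$ has full rank, so by the chain rule \eqref{chainrulecom} the prox-regularity and subdifferential continuity of $\varphi$ at $\ox$ for $0$ are inherited from those of $\psi=\dd_\O$ at $g(\ox)$ for $\oy$, and the indicator $\dd_\O$ of the polyhedral (in fact piecewise linear) set $\O$ from \eqref{NLP} is convex, hence continuously prox-regular. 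Second, the conditions \eqref{PSDlocalminimizer}, \eqref{PDNLPtilt} with $\sigma:=1/\kappa$, and \eqref{pointPDtilt} (for $w\ne 0$) are literally the conditions \eqref{PSDNLP}, \eqref{PDNLP}, and \eqref{pointPDNLP} of Theorem~\ref{vrNLPsufficiency}.

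For assertion (i), I would apply Theorem~\ref{vrNLPsufficiency}(i): validity of \eqref{PSDlocalminimizer} on some neighborhoods implies the variational sufficiency for local optimality in \eqref{NLPproblem} at $\ox$, i.e., $\varphi$ is variationally convex at $\ox$ for $0$; Remark~\ref{sttilt}(i) then gives that $\ox$ is a local minimizer of $\varphi$, hence of \eqref{NLPproblem}. Only this direction is asserted, consistently with the observation in Remark~\ref{sttilt}(i) that the converse fails in general.

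For (ii), I would combine the equivalence in Theorem~\ref{vrNLPsufficiency}(ii) with Proposition~\ref{equitiltstr}: \eqref{PDNLPtilt} with $\sigma=1/\kappa$ holding on some $U\times V$ is equivalent to the strong variational sufficiency at $\ox$ with modulus $\sigma$, i.e., to $\varphi$ being variationally strongly convex at $\ox$ for $0$ with modulus $\sigma$, which---by continuous prox-regularity of $\varphi$ and Proposition~\ref{equitiltstr}---is equivalent to $\ox$ being a tilt-stable local minimizer with modulus $\sigma^{-1}=\kappa$. Assertion (iii) follows in exactly the same way from the equivalence in Theorem~\ref{vrNLPsufficiency}(iii) (equivalence of \eqref{pointPDtilt} with strong variational sufficiency at $\ox$, without a prescribed modulus) and the modulus-free part of Proposition~\ref{equitiltstr}.

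I do not anticipate a genuine obstacle: once Theorem~\ref{vrNLPsufficiency} is in place, the corollary is essentially a translation through Remark~\ref{sttilt} and Proposition~\ref{equitiltstr}. The only points deserving an explicit word are the continuous prox-regularity of the composite NLP cost at $\ox$ under LICQ and the bookkeeping $\sigma\leftrightarrow\kappa^{-1}$ between the modulus of variational strong convexity and that of tilt stability---both immediate from the machinery already developed.
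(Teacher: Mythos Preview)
Your proposal is correct and follows essentially the same route as the paper: deduce each assertion from the corresponding item of Theorem~\ref{vrNLPsufficiency}, use Remark~\ref{sttilt}(i) for (i), and Proposition~\ref{equitiltstr} together with the continuous prox-regularity of $\varphi=\ph_0+\dd_\O\circ g$ for (ii) and (iii). The only cosmetic difference is that the paper justifies continuous prox-regularity by citing \cite[Exercise~10.26 and Proposition~13.32]{Rockafellar98} directly, whereas you transfer it from $\dd_\O$ via the full-rank chain rule argument; both are valid.
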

\begin{proof} Theorem \ref{vrNLPsufficiency}  tells us that condition \eqref{PSDlocalminimizer} yields the variational sufficiency for local optimality of $\ox$ in the composite problem \eqref{composite}
with $\psi$ and $g$ taken from \eqref{nlp}. This means that $\varphi$ in \eqref{composite} is variationally convex at $\ox$ for $0$. As discussed in Remark~\ref{sttilt}(i), $\ox$ is a local minimizer of $\varphi$, which verifies assertion (i). Note that $\varphi$ is continuously prox-regular on $\R^n$ due to \cite[Exercise~10.26 and Proposition~13.32]{Rockafellar98}, and thus $\ox$ is a tilt-stable local minimizer of \eqref{NLPproblem} with modulus $\kappa>0$ if and only if $\varphi$ is variationally convex at $\ox$ for $0$ with modulus $\kappa^{-1}$ due to Lemma~\ref{equitiltstr}. Therefore, assertions (ii) and (iii) are implied immediately by Theorem~\ref{vrNLPsufficiency}. 
\end{proof}

The following discussions shed some light on the obtained results and related developments.

\begin{Remark}[\bf discussions on optimality and tilt-stability conditions]\label{rem:opt}{\rm\,

{\bf(i)} Observe that \eqref{PSDlocalminimizer} is a ``nonstrict" neighborhood {\em second-order sufficient condition} for usual (nonstrict) local minimizers that seems to be new. We derive it as a direct consequence of the established {\em characterization of variational sufficiency}. Note that the characterizations of the variational sufficiency achieved in Section~\ref{sec:varsuf} and the explicit computations of second-order subdifferentials mentioned in Section~\ref{sec:prelim} allow us to obtain similar optimality conditions for much more general classes of composite optimization problems.

{\bf(ii)} A {\em neighborhood} characterization of tilt stability in \eqref{PDNLPtilt} was first obtained under LICQ in \cite[Theorem~5.3]{MorduNghia13} for NLP in Hilbert spaces by using another technique. Then this characterization was further developed in \cite{chn,gm,MorduNghia} for NLP in finite dimensions by using significantly less restrictive constraint qualifications.

{\bf(iii)} The {\em pointbased} characterization \eqref{pointPDtilt} of tilt-stable minimizers for finite-dimensional nonlinear programs under LICQ was first derived in \cite[Theorem~5.2]{mr}. Advanced pointbased conditions for tilt stability in NLP without imposing LICQ were developed in \cite{chn,gm} in explicit forms different from \eqref{pointPDtilt}.

{\bf(iv)} Explicit neighborhood and pointbased characterizations of tilt stability were established not only for {\em polyhedral} problems of composite optimization  (as NLP and the like), but also for {\em nonpolyhedral} optimization problems of {\em conic programming} including {\em second-order cone programs} and {\em semidefinite} ones; see \cite{bgm,mnr,mor,mos} and the references therein. These results are instrumental for our understanding of {\em strong variational sufficiency} in constrained optimization, while the study of merely {\em variational sufficiency} is a major {\em open question}.}
\end{Remark}\vspace*{-0.22in}

\section{Conclusions and Future Research}\label{sec:conclusion}\vspace*{-0.05in}
\setcounter{equation}{0}

This paper contributes to the study and applications of the powerful Rockafellar's notions of variational convexity of extended-real-valued functions and variational sufficiency for local optimality as well as of their strong counterparts. The main novelty of our results is in equivalent reductions of these notions to the conventional local convexity and strong local convexity of Moreau envelopes with their subsequent characterizations via second-order subdifferentials of variational analysis. The obtained characterizations allow us to efficiently study variational and strong variational sufficiency in problems of composite optimization with further applications to nonlinear programming.\vspace*{0.03in}

Some topics of our future research have been already mentioned in the text. They include, in particular, {\em numerical methods} that benefit from the local convexity/local strong convexity of Moreau envelopes of variationally convex/ variationally strongly convex functions (see Remark~\ref{vc-newton}); explicit characterizations of {\em variational sufficiency} vs.\ strong variational sufficiency of polyhedral and nonpolyhedral optimization problems (see Remark~\ref{rem:opt}), etc. In the other lines of our future research, we plan to establish {\em graphical derivative} characterizations of variational convexity and strong variational convexity for extended-real-valued functions with deriving the corresponding characterizations of variational and strong variational sufficiency in problems of composite optimization with subsequent applications to particular classes of constrained optimization problems.\vspace*{0.1in}
		
{\bf Acknowledgements}. {The authors are very grateful to two anonymous referees and Nghia Tran for their constructive remarks and suggestions, which allowed us to significantly improve the original presentation.} The second author greatly appreciates helpful discussions with Terry Rockafellar.

\small

\end{document}